\numberwithin{equation}{section}
\newtheorem*{theorem*}{Theorem}
\newtheorem*{corollary*}{\bf Corollary}
\newtheorem{theorem}{Theorem}[section]
\newtheorem{corollary}[theorem]{Corollary}
\newtheorem{definition}[theorem]{Definition}
\newtheorem{example}[theorem]{Example}
\newtheorem{lemma}[theorem]{Lemma}
\newtheorem{proposition}[theorem]{Proposition}
\newtheorem{remark}[theorem]{Remark}
\title[On Mori cone of  Bott towers]{On Mori cone of Bott towers}
 \author{B. Narasimha Chary}
\address{%
B. Narasimha Chary\\
Institut Fourier, UMR 5582 du CNRS\\
Universit{\'e} de Grenoble Alpes\\
 CS 40700, 38058\\
Grenoble cedex 09, France.\\
Email: narasimha-chary.bonala@univ-grenoble-aples.fr
}
\begin{document}
\maketitle
\begin{abstract} A Bott tower of height $r$  
is a sequence of projective bundles 
$$X_r  \overset{{\pi_r}}\longrightarrow X_{r-1} \overset{\pi_{r-1}}\longrightarrow \cdots \overset{\pi_2}\longrightarrow  X_1=\mathbb P^1 \overset{\pi_1} \longrightarrow X_0=\{pt\}, $$ 
where $X_i=\mathbb P (\mathcal O_{X_{i-1}}\oplus \mathcal L_{i-1})$ for a line bundle $\mathcal L_{i-1}$ over $X_{i-1}$ for all $1\leq i\leq r$ and $\mathbb P(-)$ denotes the projectivization. 
These are smooth projective toric varieties and we refer to the top object $X_{r}$ also as a Bott tower.
In this article,  we study the Mori cone and numerically effective (nef) cone
of Bott towers, and  we classify Fano, weak Fano and log Fano Bott towers. We prove some vanishing theorems for the cohomology of tangent bundle of Bott towers. 
 
\end{abstract}
\let\thefootnote\relax\footnotetext{The author is supported by AGIR Pole MSTIC project run by the University of Grenoble Alpes, France.} 

{\bf Keywords:} Bott towers, Mori cone, primitive relations and toric varieties.
\section{Introduction}\label{intro}

In \cite{bott1958}, R. Bott and H. Samelson introduced a family of (smooth differentiable) manifolds which may be viewed as the 
total spaces of iterated $\mathbb P^1$-bundles over a point $\{pt\}$, where each $\mathbb P^1$-bundle is the projectivization of a rank $2$ decomposable vector bundle. 
In \cite{grossberg1994bott}, M. Grossberg and Y. Karshon proved (in complex geometry setting) that these manifolds have a natural action of a compact torus and also obtained some applications to representation theory and symplectic geometry.
In \cite{civan2005bott}, Y. Civan proved that these are smooth projective toric varieties.
These are called Bott towers,
we denote them by $\{(X_i, \pi_i):1\leq i \leq r\}$, where
$$X_r\overset{\pi_r}\longrightarrow X_{r-1}\overset{\pi_{r-1}}\longrightarrow \cdots \overset{\pi_2}\longrightarrow  X_1=\mathbb P^1 \overset{\pi_1}\longrightarrow \{pt\},$$
 $X_i=\mathbb P (\mathcal O_{X_{i-1}}\oplus \mathcal L_{i-1})$ for a line bundle $\mathcal L_{i-1}$ over $X_{i-1}$ for all $1\leq i\leq r$ and  $r$ is the dimension of $X_r$. 
In \cite{choi2011properties}, \cite{choi2010topological} and \cite{ishida2012filtered}, the authors studied \textquotedblleft cohomological rigidity\textquotedblright properties of Bott towers.
These also play an important role in algebraic topology and K-theory (see \cite{civan2005homotopy}, \cite{davis1991convex} and references therein).
In this article we refer to $X_r$ also as a Bott tower (it is also called Bott manifold).

In this paper we study the geometry of Bott towers in more detail by methods of toric geometry.
We work over the field $\mathbb C$ of complex numbers.
We study the {\bf Mori cone} of $X_{r}$ and 
prove that the class of curves corresponding to `primitive relations $r(P_i)$' 
forms a basis of the real vector space of numerical classes of 
one-cycles in $X_{r}$
(see Theorem \ref{Moricone} and Corollary \ref{basis}).  An extremal ray $R$ in the Mori cone is called {\it \bf Mori ray}
if $R\cdot K_{X_r}<0$, where $K_{X_r}$ is the canonical divisor in $X_r$.
We describe extremal rays and Mori rays of the Mori cone of $X_{r}$ (see Theorem \ref{mori}).
We characterize the ampleness and numerically effectiveness of  line bundles 
on $X_{r}$ (see Lemma \ref{amplenef}) and describe the generators of the {\it nef} cone of $X_{r}$ 
(see Theorem \ref{nefgenerators}).

Recall that a smooth projective variety $X$ is called {\it \bf Fano} (respectively, {\it \bf weak Fano}) if its 
 anti-canonical divisor $- K_X$ is 
 ample (respectively, {\it nef} and {\it big}).
Following \cite{anderson2014schubert}, we say  that a pair $(X, D)$ of a normal projective variety $X$ and an 
 effective $\mathbb Q$-divisor $D$ is    {\it \bf log Fano} if it is Kawamata log terminal and $-(K_X+D)$ is ample
(see Section \ref{Logfanovariety} for more details).
We study the Fano, weak Fano and the log Fano (of the pair $(X_{r}, D)$ for a suitably chosen divisor $D$ in 
 $X_{r}$) properties of the Bott tower $X_{r}$.
To describe these results we need some notation.
It is known that a Bott tower $\{(X_i, \pi_i) :1\leq i \leq r\}$ is uniquely determined by an upper 
triangular 
matrix $M_r$ with integer entries, defined via the first Chern class of the line bundle $\mathcal L_{i-1}$ on $X_{i-1}$, 
where $X_i=\mathbb P(\mathcal O_{X_{i-1}}\oplus \mathcal L_{i-1})$ for $1\leq i \leq r$ (see \cite[Section 2.3]{grossberg1994bott}, \cite{civan2005bott} and \cite[Section 7.8]{buchstaber2015toric}).
For more details see Section \ref{preliminaries}.
Let $$M_r:=\begin{bmatrix}
        1 & \beta_{12} & \beta_{13} & \dots & \beta_{1r}\\
        0 & 1 & \beta_{23} & \dots & \beta_{2r}\\
        0 & 0 & 1 &\dots & \beta_{3r}\\
        \vdots & \vdots & &\ddots & \vdots \\
        0 & \dots & \dots & & 1
       \end{bmatrix}_{r\times r},
$$ where $\beta_{ij}$'s are integers.  
 Define for $1\leq i \leq r$,  
 \[\eta^+_i:=\{r\geq j> i: \beta_{ij}>0\}\] and \[\eta^-_i:=\{r\geq j> i: \beta_{ij}<0\}.\]
  If $|\eta_{i}^-|=1$ (respectively, $|\eta_{i}^-|=2$),
 then set $\eta^-_i=\{l\}$ (respectively,  $\eta^-_i=\{l_1, l_2\}$).
  The following can be viewed as a condition on $i^{th}$ row of the matrix $M_r$:
 \noindent 
 \begin{itemize}
 \item $N^{1}_i$  is the condition that  
 
 (i) $|\eta^+_i|=0$, $|\eta^-_i|\leq 1$, and if $|\eta^-_i|=1$ 
 then $\beta_{il}=-1$; or 
 
 (ii) If $|\eta^+_i|>0$, then there exists $m>i$ such that $\beta_{im}=1$, $\beta_{ij}=0$ for all $j<m$, and $\beta_{ij}=\beta_{mj}$ for all $j>m$.
 
 \item $N^{2}_i$  is the condition that 
    
    (i) Assume that $|\eta^+_i|=0$. Then $|\eta_i^-|\leq 2$, and 
  if $|\eta^-_i|=1 (\mbox{respectively,}~|\eta^-_i|=2)$ then  $\beta_{il}=-1$ or $-2$ 
  (respectively, $\beta_{il_1}=-1=\beta_{il_2}$); or
      
      (ii) If $|\eta^+_i|>0$, then one of the following holds: Let $A_{m,j}:=\beta_{im}\beta_{mj}-\beta_{ij}$.
    
    \noindent (a) There exists $m>i$ such that $\beta_{im}=1$ or $2$; $\beta_{ij}=0$ for all $j<m$ and $A_{m,j}=0$ for $j>m$.\\ 
(b) There exists $m_2>m_1>i$ such that $-\beta_{im_1}=\beta_{im_2}=1$, $\beta_{ij}=0$ for $m_1\neq j<m_2$  and $A_{m_2,j}=0$ for $j>m_2$.\\
(c) There exists $m_2>m_1>i$ such that  $\beta_{im_1}=A_{m_1, m_2}=1$, $\beta_{ij}=0$ for $j<m_1$ and $A_{m_1,j}=0$ for $m_1<j\neq m_2$.\\
(d) There exists $m_2>m_1>i$ such that $\beta_{im_1}=1=-A_{m_1, m_2}$, $\beta_{ij}=0$ for $j<m_1$,  $A_{m_1,j}=0$ for $m_1<j<m_2$ and $\beta_{m_2j}+A_{m_1,j}=0$ for $j>m_2$.

\end{itemize}

   \begin{definition}   
 We say  $X_{r}$ satisfies \underline{condition $I$} (respectively,  \underline{condition $II$})
 if
 $N^1_{i}$ (respectively, $N^2_i$)
 holds for all $1\leq i\leq r$.
   \end{definition}
  
   Note that $N_i^1\Longrightarrow N_i^2$ for all $1\leq i\leq r$.
If $X_{r}$ satisfies condition $I$, then it also satisfies conditions $II$. 
  We prove, 
  
  \begin{theorem*}[see Theorem \ref{fano}]\
  
  \begin{enumerate}
   \item 
   $X_{r}$ is Fano if and only if it satisfies 
   $I$.
\item    $X_{r}$ is weak  Fano if and only if it satisfies 
$II$.
\end{enumerate}
\end{theorem*}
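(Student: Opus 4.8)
The plan is to translate the positivity of $-K_{X_r}$ along the Mori cone into the stated combinatorial conditions on $M_r$, using the description of the Mori cone already obtained. By Theorem \ref{Moricone} and Corollary \ref{basis} the classes of the primitive relations $r(P_1),\dots,r(P_r)$ generate the Mori cone of $X_r$, so a line bundle is ample (respectively nef) precisely when it meets each $r(P_i)$ positively (respectively non-negatively); this is the content of Lemma \ref{amplenef}. Applying this to $-K_{X_r}=\sum_\rho D_\rho$, I would first reduce both assertions to
\[ X_r \text{ is Fano} \iff -K_{X_r}\cdot r(P_i)>0 \text{ for all } i, \qquad X_r \text{ is weak Fano} \iff -K_{X_r}\cdot r(P_i)\ge 0 \text{ for all } i. \]
For the Fano statement this is immediate. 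For weak Fano I would first note that bigness of $-K_{X_r}$ is automatic: on any complete toric variety the polytope $P_{-K_{X_r}}=\{m:\langle m,u_\rho\rangle\ge -1\ \forall\rho\}$ is bounded and contains the origin in its interior, hence is full-dimensional, so $-K_{X_r}$ is big. Thus weak Fano is equivalent to $-K_{X_r}$ being nef, i.e. to non-negativity against every $r(P_i)$.

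The heart of the proof is then the computation of the integer $d_i:=-K_{X_r}\cdot r(P_i)$ in terms of the entries $\beta_{ij}$. With the fan of $X_r$ as in Section \ref{preliminaries}, the primitive collection over the $i$-th stage is $P_i=\{v_i,w_i\}$, and its primitive relation is obtained by writing $v_i+w_i=-\sum_{j>i}\beta_{ij}v_j$ (generators normalised as in Section \ref{preliminaries}) as a non-negative combination of the generators of the unique cone of the fan containing it. Since $-K_{X_r}=\sum_\rho D_\rho$, one has $d_i=2-(\text{sum of the coefficients in this expression})$. I would compute the expression by sweeping the indices $j>i$ upward: an index $j$ with $\beta_{ij}<0$ contributes the generator $v_j$ with coefficient $|\beta_{ij}|$ and leaves the higher coordinates untouched, whereas an index $m$ with $\beta_{im}>0$ forces the generator $w_m$ with coefficient $\beta_{im}$ and replaces the entries in the coordinates $j>m$ by the residuals $A_{m,j}=\beta_{im}\beta_{mj}-\beta_{ij}$, which are reduced in turn. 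This is exactly where the quantities $A_{m,j}$ of condition $N^2_i$ originate.

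With this reduction in hand the classification becomes a bookkeeping of when the total coefficient sum is $\le 1$ (Fano) or $\le 2$ (weak Fano). If $\eta^+_i=\varnothing$ the reduction never enters the $w$-branch, the coefficient sum is $\sum_{j\in\eta^-_i}|\beta_{ij}|$, and the bounds $\le 1$ and $\le 2$ read off exactly as $N^1_i(\mathrm{i})$ and $N^2_i(\mathrm{i})$. If $\eta^+_i\ne\varnothing$, the first positive entry at some $m$ already costs $\beta_{im}\in\{1,2\}$, which tightly limits how the residuals may behave: the terminal cone must be one of $\langle w_m\rangle$, $\langle v_{m_1},w_{m_2}\rangle$, $\langle w_{m_1},v_{m_2}\rangle$ or $\langle w_{m_1},w_{m_2}\rangle$, and requiring the residuals $A_{m,j}$ to vanish except in the single slot allowed by the budget produces precisely the subcases $N^2_i(\mathrm{ii})(a)$–$(d)$; the Fano case forces the single configuration $\beta_{im}=1$, $A_{m,j}=0$, that is $N^1_i(\mathrm{ii})$. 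Running this over all rows gives conditions $I$ and $II$, and the surviving rays are the extremal (and, in the Fano case, Mori) rays identified in Theorem \ref{mori}.

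The step I expect to be the main obstacle is the exhaustiveness of this last case analysis: one must check that the four terminal cones above are the only ones with coefficient sum $\le 2$, that the listed vanishing conditions on the $A_{m,j}$ (and the relation $\beta_{m_2 j}+A_{m_1,j}=0$ in case $(d)$) are exactly what forces the reduction to terminate inside a genuine cone of the Bott tower fan, and that no sign pattern of the $i$-th row is overlooked. The combinatorics of the Bott tower fan, namely that every choice of one generator from each pair $\{v_j,w_j\}$ spans a maximal cone, is what guarantees that the reduction always lands in an honest cone and makes the enumeration both finite and complete.
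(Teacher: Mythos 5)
Your proposal follows essentially the same route as the paper: reduce ampleness/nefness of $-K_{X_r}$ to the intersection numbers $d_i = 2-\sum_{\gamma_j\in\gamma_{P_i}(1)}c_j$ against the primitive relations (Lemma \ref{amplenef}), compute $\gamma_{P_i}$ by the same iterative substitution that produces the residuals $A_{m,j}$ (Proposition \ref{gamma}), and enumerate the cases $\sum c_j\le 1$ and $\sum c_j\le 2$ to recover $N^1_i$ and $N^2_i$. The only departure is your bigness argument via the full-dimensional polytope of $-K_{X_r}$, where the paper instead observes that $X_r\setminus \mathrm{supp}(-K_{X_r})$ is the affine torus and invokes Corollary \ref{big}; both are standard and correct.
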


As a consequence we get some vanishing results for the cohomology of tangent bundle of Bott towers
and hence local rigidity results.
Let $T_{X_{r}}$ denote the tangent bundle of $X_{r}$.
 \begin{corollary*}[see Corollary \ref{vanishing} and Corollary \ref{rigid}]\
   If $X_{r}$ satisfies 
$I$, then  $H^i(X_{r}, T_{X_{r}})=0$ for all $i\geq 1$. In particular,  
$X_r$ is locally rigid.
\end{corollary*}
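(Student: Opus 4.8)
The plan is to deduce the vanishing from Theorem~\ref{fano} together with the toric Euler sequence, reducing everything to the cohomology of the torus-invariant prime divisors and then computing the latter by induction along the tower. By Theorem~\ref{fano}, condition $I$ is equivalent to $X_r$ being Fano, so I may assume $-K_{X_r}$ is ample. Let $\Sigma$ be the fan of $X_r$, so that $|\Sigma(1)|=2r$ and $\mathrm{Pic}(X_r)\cong\mathbb{Z}^r$; since $X_r$ is a smooth projective toric variety it is rational and $H^j(X_r,\mathcal{O}_{X_r})=0$ for all $j\geq 1$. The first step is the generalized Euler sequence
\[
0\longrightarrow \mathcal{O}_{X_r}^{\oplus r}\longrightarrow \bigoplus_{\rho\in\Sigma(1)}\mathcal{O}_{X_r}(D_\rho)\longrightarrow T_{X_r}\longrightarrow 0,
\]
where the $D_\rho$ are the torus-invariant prime divisors and the left-hand term is $\mathrm{Pic}(X_r)\otimes_{\mathbb{Z}}\mathcal{O}_{X_r}$. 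Passing to the associated long exact sequence and using $H^j(X_r,\mathcal{O}_{X_r})=0$ for $j\geq 1$, the terms $H^i(\mathcal{O}_{X_r}^{\oplus r})$ and $H^{i+1}(\mathcal{O}_{X_r}^{\oplus r})$ both vanish for every $i\geq 1$, yielding isomorphisms
\[
H^i(X_r,T_{X_r})\cong \bigoplus_{\rho\in\Sigma(1)}H^i\bigl(X_r,\mathcal{O}_{X_r}(D_\rho)\bigr)\qquad(i\geq 1).
\]
Thus it suffices to prove $H^i(X_r,\mathcal{O}_{X_r}(D_\rho))=0$ for all $i\geq 1$ and every $\rho$.

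I would establish this by induction on $r$, exploiting the top projection $\pi_r:X_r\to X_{r-1}$, which is a $\mathbb{P}^1$-bundle. The $2r$ invariant divisors split into the two sections of $\pi_r$ and the preimages $\pi_r^{-1}(D'_\rho)$ of the $2(r-1)$ invariant divisors $D'_\rho$ of $X_{r-1}$. For a preimage divisor one has $\mathcal{O}_{X_r}(\pi_r^{-1}(D'_\rho))=\pi_r^{*}\mathcal{O}_{X_{r-1}}(D'_\rho)$, so the projection formula together with $\pi_{r*}\mathcal{O}_{X_r}=\mathcal{O}_{X_{r-1}}$ and $R^i\pi_{r*}\mathcal{O}_{X_r}=0$ for $i\geq 1$ gives $H^i(X_r,\mathcal{O}(\pi_r^{-1}D'_\rho))\cong H^i(X_{r-1},\mathcal{O}(D'_\rho))$. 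Here one checks that condition $I$ descends to the truncated matrix $M_{r-1}$ (if the index $m$ in $N^1_i$ equals $r$, the truncated row of $M_{r-1}$ has $|\eta_i^{\pm}|=0$), so $X_{r-1}$ is again Fano and the inductive hypothesis applies. For a section $\sigma$ one writes $\mathcal{O}_{X_r}(\sigma)$ as the relative degree-one bundle of $\pi_r$ twisted by a pullback; then $R^i\pi_{r*}\mathcal{O}_{X_r}(\sigma)=0$ for $i\geq 1$ and $\pi_{r*}\mathcal{O}_{X_r}(\sigma)$ is a direct sum of two line bundles on $X_{r-1}$ determined by $\mathcal{L}_{r-1}$, i.e.\ by the last column $(\beta_{1r},\dots,\beta_{r-1\,r})$ of $M_r$, so by the Leray spectral sequence the computation again reduces to the cohomology of explicit line bundles on the smaller Bott tower.

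The main obstacle is this last reduction: one must show that the line bundles produced on $X_{r-1}$ by pushing forward the sections again have vanishing higher cohomology, and this is precisely where the Fano hypothesis enters. The Hirzebruch surface $\mathbb{F}_a$, for which the negative section satisfies $H^1(\mathbb{F}_a,\mathcal{O}(\sigma))\cong H^1(\mathbb{P}^1,\mathcal{O}(-a))\neq 0$ when $a\geq 2$, shows that some positivity assumption is indispensable. Concretely, I would record the classes of the $D_\rho$ in terms of the $\beta_{ij}$, translate the relative twists appearing at each stage of the iterated push-forward into numerical inequalities, and verify that conditions $N^1_i$ (which constrain the signs and sizes of the $\beta_{ij}$ via the sets $\eta_i^{+}$ and $\eta_i^{-}$) keep every such twist in the range where the relevant line bundle has no higher direct image; equivalently, one may invoke the combinatorial description of sheaf cohomology of torus-invariant divisors on a smooth complete toric variety (see, e.g., \cite{buchstaber2015toric}) and check that the associated support complexes are acyclic. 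The bookkeeping across all $r$ levels of the tower is the genuinely technical part.

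Finally, the ``in particular'' is immediate: the case $i=1$ gives $H^1(X_r,T_{X_r})=0$, so $X_r$ admits no nontrivial first-order deformations, and by Kodaira--Spencer deformation theory $X_r$ is locally rigid.
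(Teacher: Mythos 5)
Your argument has a genuine gap at its central step. The reduction via the generalized Euler sequence to the statement $H^i(X_r,\mathcal{O}_{X_r}(D_\rho))=0$ for all $i\geq 1$ and all $\rho\in\Sigma(1)$ is fine, and the observation that condition $I$ descends to $M_{r-1}$ is correct. But the claim that each $\mathcal{O}_{X_r}(D_\rho)$ has no higher cohomology under condition $I$ is exactly the hard part, and you do not prove it: the passage beginning ``Concretely, I would record the classes\dots'' and ending ``the bookkeeping across all $r$ levels of the tower is the genuinely technical part'' is a plan, not an argument. The difficulty is not mere bookkeeping. When you push a section divisor down by $\pi_r$ you obtain line bundles on $X_{r-1}$ built from $\mathcal{L}_{r-1}$, i.e.\ from the last column of $M_r$; these are \emph{not} boundary divisors of $X_{r-1}$, so your inductive hypothesis (vanishing for the $\mathcal{O}(D'_\rho)$ on $X_{r-1}$) does not apply to them. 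Iterating the pushforward through all $r$ stages produces a growing family of twists whose higher cohomology must be controlled, and you would need to formulate and prove a stronger inductive statement covering that whole family, with the inequalities coming from $N^1_i$ feeding in at each stage. Your own Hirzebruch example shows the vanishing genuinely fails just outside the Fano range, so nothing soft will close this; as written, the key implication is asserted rather than established. (It is also worth noting that $H^i(X,\mathcal{O}_X(D_\rho))=0$ for $i\geq 1$ on an arbitrary smooth toric Fano variety is itself a nontrivial theorem, not something you may take for granted.)

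For comparison, the paper's proof is a two-line citation: condition $I$ is equivalent to $X_r$ being Fano by Theorem~\ref{fano}, and for a smooth Fano toric variety the vanishing $H^i(X,T_X)=0$, $i\geq 1$, is \cite[Proposition 4.2]{Bien1996}; local rigidity then follows from Kodaira--Spencer exactly as you say. Your final paragraph on the ``in particular'' is correct. If you want a self-contained proof along your lines, the honest route is to either carry out the combinatorial computation of $H^i(X,\mathcal{O}(D_\rho))$ for smooth toric Fano varieties in full (showing the relevant support sets are acyclic), or to replace the whole inductive scheme by an appeal to a vanishing theorem you can actually quote.
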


 For $1\leq i \leq r$, we define some constants  $k_i$  which again depend on the given matrix $M_r$ corresponding to the Bott tower $X_{r}$ 
 (for more details see Section \ref{Logfanovariety}). 
 We prove,
 
 \begin{theorem*}[see Theorem  \ref{logfano}]\
   The pair $(X_{r}, D)$ is log Fano if and only if $k_i<0$ for all $1\leq i\leq r$. 
 \end{theorem*}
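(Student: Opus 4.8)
The plan is to check the two defining properties of a log Fano pair separately: that $(X_r,D)$ is Kawamata log terminal (klt), and that $-(K_{X_r}+D)$ is ample. Since $X_r$ is a smooth toric variety and $D$ is the torus-invariant boundary $\mathbb{Q}$-divisor constructed in Section \ref{Logfanovariety}, its support has simple normal crossings, so the klt condition is equivalent to the requirement that every coefficient of $D$ be strictly less than $1$. First I would verify directly from the construction that each coefficient of $D$ lies in $[0,1)$; this makes the pair klt unconditionally and reduces the log Fano property to the single requirement that $-(K_{X_r}+D)$ be ample.

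For the ampleness I would use the description of the Mori cone. By Corollary \ref{basis} the primitive-relation classes $r(P_i)$ form a basis of the space of numerical one-cycles, and by Theorem \ref{mori} they generate $\overline{NE}(X_r)$. Hence, by Kleiman's criterion (equivalently, by the numerical criterion of Lemma \ref{amplenef}), $-(K_{X_r}+D)$ is ample if and only if
\[
-(K_{X_r}+D)\cdot r(P_i)>0 \qquad \text{for all } 1\le i\le r .
\]
The whole problem thus collapses to evaluating these $r$ intersection numbers.

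The key step is to show that, for the constants $k_i$ of Section \ref{Logfanovariety},
\[
-(K_{X_r}+D)\cdot r(P_i) = -k_i
\]
(up to a fixed positive factor) for each $i$. To obtain this I would write $-K_{X_r}$ as the sum of all torus-invariant prime divisors, express both $D$ and $r(P_i)$ in the basis of $\mathrm{Pic}(X_r)$ adapted to the iterated $\mathbb{P}^1$-bundle structure, and then intersect. The pairing between the tautological classes and the fiber classes at each level of the tower is controlled by the entries $\beta_{ij}$ of $M_r$; feeding these into the expression for $-(K_{X_r}+D)\cdot r(P_i)$ reproduces precisely the quantity defining $k_i$. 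Granting this identity, ampleness holds if and only if $-k_i>0$, i.e.\ $k_i<0$, for every $i$, which together with the klt verification yields the stated equivalence.

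The main obstacle is the intersection computation $-(K_{X_r}+D)\cdot r(P_i)=-k_i$ itself. The primitive collections $P_i$ of a Bott tower, and the resulting relations $r(P_i)$, take different combinatorial shapes according to the sign pattern of the $\beta_{ij}$ — the same dichotomy that underlies the conditions $N_i^1$ and $N_i^2$ in Theorem \ref{fano}. Arranging the computation so that every such case funnels into the single number $k_i$ is the delicate point. I would model this bookkeeping on the proof of the Fano characterization in Theorem \ref{fano}, where $-K_{X_r}\cdot r(P_i)$ was analyzed level by level, and simply track the additional contribution of the boundary divisor $D$ throughout.
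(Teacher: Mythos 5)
Your proposal is correct and follows essentially the same route as the paper: since $X_r$ is smooth and $\lfloor D\rfloor=0$, the klt condition is automatic, so log Fano reduces to ampleness of $-(K_{X_r}+D)$, which is then tested against the classes $r(P_i)$ exactly via Lemma \ref{amplenef}. The one point where you overestimate the work is the ``delicate'' identity $-(K_{X_r}+D)\cdot r(P_i)=-k_i$: no case analysis on the $\beta_{ij}$ is needed, because Lemma \ref{amplenef} already gives $D'\cdot r(P_i)=a_{\rho_i^+}+a_{\rho_i^-}-\sum_{\gamma_j\in\gamma_{P_i}(1)}c_j a_{\gamma_j}$ for an arbitrary toric divisor $D'=\sum a_\rho D_\rho$, and substituting the coefficients $1-a_\rho$ of $-(K_{X_r}+D)$ yields $2-\sum_j c_j-d_i=-k_i$ directly from the definition of $k_i$.
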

 \begin{remark}
  By using the results of this article, in \cite{Charytoric} we give some applications to Bott-Samelson-Demazure-Hansen (BSDH) variety, which can be described
also as a iterated projective line bundle,
by degeneration of this variety to a Bott tower.
Precisely, we study Fano, weak Fano, log Fano properties for BSDH varieties (see also \cite{Charyfano}). 
We obtain some vanishing theorems for the cohomology of tangent bundle (and line bundles) on BSDH varieties   
(see also \cite{Chary1}, \cite{Charynonreduced} and \cite{Chary11}).  
We also recover the results in \cite{parameswaran2016toric}.
 \end{remark}

 The paper is organized as follows:  
 In Section \ref{preliminaries}, we discuss preliminaries on Bott towers and toric varieties.
   In Section \ref{picard}, we discuss the Picard group of the Bott tower and compute the 
 relative tangent bundle.
Section \ref{Mori0} contains detailed study of primitive collections and primitive relations of the Bott tower and we
also describe the Mori cone.  
In Section \ref{ample and nef line bundles} we describe ample and {\it nef} line bundles on
the Bott tower, 
and we find the generators of the {\it nef} cone.
In Section \ref{fanoweakfano} and \ref{Logfanovariety}, we study  Fano, weak Fano and log Fan properties for Bott towers.
We 
also see some vanishing results.
In Section \ref{extremalrays}, we describe extremal rays and Mori rays for the Bott tower.

\section{Preliminaries}\label{preliminaries} In this section we 
recall toric varieties (see \cite{cox2011toric}) and Bott towers (see \cite{civan2005bott} and \cite{buchstaber2015toric}).
We work throughout the article over the field $\mathbb C$ of complex numbers. We expect that the proofs work for 
algebraically closed fields of arbitrary characteristic, but did not find appropriate references in that generality. 

\subsection{Toric varieties}

We briefly recall the structure of toric varieties from \cite{cox2011toric} (see also \cite{fulton} and \cite{oda}).
\begin{definition}
 A normal variety $X$ is called a \it{toric variety} (of dimension $n$)
 if it contains an $n$-dimensional
 torus $T$ (i.e. $T=(\mathbb C^{*})^n$) as a Zariski open subset such that the action of 
 the torus on itself by multiplication extends to an action of the torus on $X$.
\end{definition}

Toric varieties are completely described by the combinatorics of the corresponding fans. 
 We briefly recall here, 
 let $N$ be the lattice of one-parameter subgroups of $T$ and let $M$ be 
 the lattice of characters of $T$. Let $M_{\mathbb R}:=M\otimes \mathbb R$ and 
 $N_{\mathbb R}:=N\otimes \mathbb R$. Then we have a natural bilinear pairing 
  $$\langle-,-\rangle:M_{\mathbb R}\times N_{\mathbb R}\to \mathbb R.$$
  A fan $\Sigma$ in $N_{\mathbb R}$ is a collection of convex polyhedral
cones that is closed under intersections and cone faces. Let $\check \sigma$ be the dual cone of $\sigma\in \Sigma$
in $M_{\mathbb R}$.
For $\sigma\in \Sigma$, the semigroup algebra $\mathbb C[\check \sigma\cap M]$ is a normal domain and finitely generated $\mathbb C$-algebra. Then the scheme $Spec (\mathbb C[\check \sigma\cap M])$ is 
called the affine toric variety corresponding to $\sigma$.
For a given fan $\Sigma$, we can define a toric variety $X_{\Sigma}$
by gluing the affine toric varieties $Spec (\mathbb C[\check \sigma\cap M])$
as $\sigma$ varies in $\Sigma$. For all $1\leq s \leq n$, 
$$\Sigma(s):=\{\sigma\in \Sigma: dim(\sigma)=s\}.$$
For each $\rho\in \Sigma(1)$, we denote $u_{\rho}$, the generator of $\rho \cap N.$ 
For $\sigma\in \Sigma$, 
$$\sigma(1):=\Sigma (1) \cap \sigma.$$
There is a bijective correspondence between the cones in $\Sigma$ and the $T$-orbits in $X_{\Sigma}$.
For each $\sigma\in \Sigma$, the dimension $dim(O(\sigma))$ of the $T$-orbit $O(\sigma)$ corresponding to $\sigma$ is $n-dim(\sigma)$.
Let $\tau, \sigma \in \Sigma$, 
then  $\tau$ is a face of $\sigma$ if and only if $O(\sigma)\subset \overline {O(\tau)}$, 
where $\overline{O(\sigma)}$ is the closure of $T$-orbit $O(\sigma)$.
We denote $V(\sigma)=\overline {O(\sigma)}$ and it is a toric variety with the corresponding fan being Star($\sigma$), the star of
$\sigma$ which is the set of cones in $\Sigma$ which have
$\sigma$ as a face.
Let $D_{\rho}=\overline {O(\rho)}$ be the torus-invariant prime divisor in $X_{\Sigma}$ corresponding to $\rho\in \Sigma(1)$.
The group $TDiv(X_{\Sigma})$ of  $T$-invariant divisors in $X_{\Sigma}$ is given by 
\[TDiv(X_{\Sigma})=\bigoplus_{\rho\in\Sigma(1)}\mathbb ZD_{\rho} .\] 
For each $m\in M$, the character $\chi^m$ of $T$ is a rational function on $X_{\Sigma}$ and the corresponding divisor is given by 
\[div(\chi^m)=\sum_{\rho\in \Sigma(1)}\langle m, u_{\rho} \rangle D_{\rho}.\]

\subsection{Bott towers} In this section we recall some basic definitions and results on Bott towers.
Let $\mathcal L_0$ be a trivial line bundle over a single point $X_0:=\{pt\}$, and let $X_1:=\mathbb P(\mathcal O_{X_0}\oplus \mathcal L_0)$, where 
$\mathbb P(-)$ denotes the projectivization.  
Let $\mathcal L_1$ be a line bundle on $X_{1}$, then define $X_2:=\mathbb P(\mathcal O_{X_1}\oplus \mathcal L_{1})$, which is a $\mathbb P^1$-bundle over $X_1$.
Repeat this process 
$r$-times, so that each $X_i$ is a $\mathbb P^1$-bundle over $X_{i-1}$ for $1\leq i \leq r$.
We get the following:
\[
\begin{tikzcd}
 X_r=\mathbb P(\mathcal O_{X_{r-1}}\oplus \mathcal L_{r-1}) \arrow[shorten >=5pt]{d}{\pi_r}  \\
 X_{r-1}=\mathbb P(\mathcal O_{X_{r-2}}\oplus \mathcal L_{r-2}) \arrow[shorten >=5pt]{d}{\pi_{r-1}} &  \\
 \vdots  
\arrow[shorten >=5pt]{d}{\pi_2}\\
  X_{1}=\mathbb P(\mathcal O_{X_0}\oplus \mathcal L_0)\arrow[shorten >=5pt]{d}{\pi_1}\\
X_0=\{pt\}
\end{tikzcd}
\]

For each $1\leq i \leq r$, $X_i$ is a smooth projective toric variety (see \cite[Theorem 22]{civan2005bott}). 
Consider the points $[1: 0]$ and $[0: 1]$ in $\mathbb P^1$, we call them the south pole and the north pole respectively. The zero section of 
$\mathcal L_{i-1}$ gives a section $s^0_{i}: X_{i-1}\longrightarrow X_{i}$
, the south pole section; similarly, the north pole section $s^1_i: X_{i-1}\longrightarrow X_{i}$ by letting the first coordinate in $\mathbb P(\mathcal O_{X_{i-1}}\oplus \mathcal L_{i-1})$ to vanish.

Let $1\leq i \leq r$.
Since $\pi_i: X_i\longrightarrow X_{i-1}$ is a projective bundle, by a standard result on the cohomology ring of projective bundles we have the following (see \cite[Page 429]{hartshorne} for instance, and also \cite[Proposition 10.1]{milne2016etale}):     
\begin{theorem} \label{007}
  The cohomology ring $H^*(X_i, \mathbb Z)$ of $X_i$ is a free module over $H^*(X_{i-1}, \mathbb Z)$ on generators $1$ and $u_{i}$, 
 which have degree $0$ and $2$ respectively, 
 that is 
 $$H^*(X_i, \mathbb Z)=H^*(X_{i-1}, \mathbb Z){1}\oplus H^*(X_{i-1}, \mathbb Z){u_i}.$$
The ring structure is determined by the single relation
$$u_i^2=c_1(\mathcal L_{i-1})u_i,$$ 
where $c_1(-)$ denotes the first Chern class and 
the restriction of $u_i$ to the fiber $\mathbb P^1\subset X_i$ is the first Chern class of the canonical line bundle over $\mathbb P^1$.
Hence we have $$H^*(X_{i}, \mathbb Z)=H^*(X_{i-1}, \mathbb Z)[u_i]/J_i,$$
where $J_i$ is the ideal generated by $u_i^2-c_1(\mathcal L_{i-1})u_i$.
\end{theorem}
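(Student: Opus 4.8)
The plan is to recognize the statement as the projective bundle theorem (equivalently, the Leray--Hirsch theorem together with the Grothendieck relation) applied to the rank-two bundle defining $\pi_i$, and then to specialize the general Chern-class computation to the split bundle $\mathcal O_{X_{i-1}}\oplus\mathcal L_{i-1}$. First I would fix notation: write $E_{i-1}:=\mathcal O_{X_{i-1}}\oplus\mathcal L_{i-1}$, so that $\pi_i\colon X_i=\mathbb P(E_{i-1})\to X_{i-1}$ is the projectivization of a locally free sheaf of rank $2$, with fibers $\mathbb P^1$. I would then set $u_i:=c_1\bigl(\mathcal O_{\mathbb P(E_{i-1})}(1)\bigr)$, the first Chern class of the tautological line bundle, as the candidate degree-two generator.

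For the module statement, the projective bundle theorem in the form cited from \cite[p.~429]{hartshorne} and \cite[Proposition 10.1]{milne2016etale} asserts that $H^*(X_i,\mathbb Z)$ is a free $H^*(X_{i-1},\mathbb Z)$-module with basis $1,u_i,\dots,u_i^{\,\mathrm{rk}(E_{i-1})-1}$; since $E_{i-1}$ has rank $2$, this basis is just $\{1,u_i\}$. The same theorem identifies the restriction of $u_i$ to a fiber $\mathbb P^1\subset X_i$ with the generator of $H^2(\mathbb P^1,\mathbb Z)$ appearing in the statement (the one there called the first Chern class of the canonical line bundle on the fiber), which is exactly the fiber-restriction assertion.

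For the ring structure, I would use the Grothendieck relation attached to $\mathbb P(E_{i-1})$, namely $u_i^2-\pi_i^*c_1(E_{i-1})\,u_i+\pi_i^*c_2(E_{i-1})=0$. By the Whitney sum formula applied to $E_{i-1}=\mathcal O_{X_{i-1}}\oplus\mathcal L_{i-1}$ one has $c(E_{i-1})=1+c_1(\mathcal L_{i-1})$, hence $c_1(E_{i-1})=c_1(\mathcal L_{i-1})$ and $c_2(E_{i-1})=0$. Substituting and suppressing the pullback $\pi_i^*$ yields $u_i^2=c_1(\mathcal L_{i-1})\,u_i$. Combining this single monic relation of degree two with the free-module structure then delivers the presentation $H^*(X_i,\mathbb Z)=H^*(X_{i-1},\mathbb Z)[u_i]/J_i$ with $J_i=(u_i^2-c_1(\mathcal L_{i-1})u_i)$, because quotienting the polynomial ring by $J_i$ produces precisely a free module with basis $\{1,u_i\}$, matching what Leray--Hirsch already gave.

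The step that requires the most care is not any calculation but the bookkeeping of conventions. One must fix the $\mathbb P(-)$ convention and the sign in the Grothendieck relation so that the resulting identity is exactly $u_i^2=c_1(\mathcal L_{i-1})u_i$ rather than a sign-twisted variant, and correspondingly normalize $u_i$ so that its fiber restriction agrees with the generator named in the statement. Once these conventions are pinned down to match those of \cite{hartshorne} and \cite{milne2016etale}, the theorem is a direct specialization of the cited projective bundle formula, the only bundle-specific input being the vanishing $c_2(E_{i-1})=0$ and the identity $c_1(E_{i-1})=c_1(\mathcal L_{i-1})$ coming from the splitting $E_{i-1}=\mathcal O_{X_{i-1}}\oplus\mathcal L_{i-1}$.
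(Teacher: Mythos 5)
Your proposal is correct and follows essentially the same route as the paper, which simply invokes the projective bundle theorem as cited from Hartshorne and Milne without further argument. Your explicit specialization via the Grothendieck relation and the Whitney sum formula ($c_1(E_{i-1})=c_1(\mathcal L_{i-1})$, $c_2(E_{i-1})=0$) correctly supplies the details the paper leaves to the references.
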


Consider the exponential sequence (see \cite[Page 446]{hartshorne}):
$$0\longrightarrow \mathbb Z \longrightarrow \mathcal O_{X_{i-1}}\longrightarrow \mathcal O_{X_{i-1}}^*\longrightarrow 0.$$
Then we get the following exact sequence:

$0 \longrightarrow H^1(X_{i-1}, \mathbb Z)\longrightarrow H^1(X_{i-1}, \mathcal O_{X_{i-1}}) \longrightarrow H^1(X_{i-1}, \mathcal O_{X_{i-1}}^*) \overset{c_1(-)}{\longrightarrow} H^2(X_{i-1}, \mathbb Z) \longrightarrow H^2(X_{i-1}, \mathcal O_{X_{i-1}}) \longrightarrow \cdots $

Since $X_{i-1}$ is toric, we have $H^j(X_{i-1}, \mathcal O_{X_{i-1}})=0$ for all $j>0$ (see \cite[Corollary 2.8]{oda}). As $H^1(X_{i-1}, \mathcal O_{X_{i-1}}^*)=Pic(X_{i-1})$, we get $c_1(-):Pic(X_{i-1})\overset{\sim}{\longrightarrow} H^2(X_{i-1}, \mathbb Z)$. 
Then we have the following:

\begin{theorem}\label{006} Each line bundle $\mathcal L_{i-1}$ on $X_{i-1}$ is determined (up to an algebraic isomorphism) by its first Chern class, which can be written as a linear combination 
$$c_1(\mathcal L_{i-1})=-\sum_{k=1}^{i-1}\beta_{ki}u_k\in H^2(X_{i-1}, \mathbb Z),$$
where $\beta_{ik}$'s are integers for $1\leq k \leq i-1$.
\end{theorem}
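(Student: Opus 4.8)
The plan is to deduce the statement from two ingredients that are (essentially) already in place: the isomorphism $c_1\colon \mathrm{Pic}(X_{i-1}) \xrightarrow{\sim} H^2(X_{i-1}, \mathbb{Z})$ recorded just above the theorem, and the free $\mathbb{Z}$-module structure of $H^2(X_{i-1}, \mathbb{Z})$ coming from Theorem \ref{007}. The ``determined up to algebraic isomorphism'' assertion is immediate from injectivity of $c_1$: if two line bundles on $X_{i-1}$ share the same first Chern class, they coincide in $\mathrm{Pic}(X_{i-1})$ and hence are isomorphic as algebraic line bundles. So the genuine content is the \emph{shape} of the class, namely that it lies in the $\mathbb{Z}$-span of $u_1, \dots, u_{i-1}$.

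First I would pin down $H^2(X_{i-1}, \mathbb{Z})$ as a free abelian group with basis $u_1, \dots, u_{i-1}$, by induction on the height. For the base, $X_0$ is a point so $H^2(X_0, \mathbb{Z}) = 0$, while $X_1 = \mathbb{P}^1$ gives $H^2(X_1, \mathbb{Z}) = \mathbb{Z}\, u_1$. For the inductive step I would invoke Theorem \ref{007}, which presents $H^*(X_j, \mathbb{Z}) = H^*(X_{j-1}, \mathbb{Z})\cdot 1 \oplus H^*(X_{j-1}, \mathbb{Z})\cdot u_j$ as a free module with $\deg 1 = 0$ and $\deg u_j = 2$. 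Extracting the total-degree-$2$ part, and using that $X_{j-1}$ is connected so that $H^0(X_{j-1}, \mathbb{Z}) = \mathbb{Z}$, yields
$$H^2(X_j, \mathbb{Z}) = H^2(X_{j-1}, \mathbb{Z}) \oplus \mathbb{Z}\, u_j.$$
Feeding the inductive hypothesis into the first summand gives $H^2(X_j, \mathbb{Z}) = \bigoplus_{k=1}^{j} \mathbb{Z}\, u_k$, completing the induction.

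With this in hand, $c_1(\mathcal L_{i-1})$, being an element of $H^2(X_{i-1}, \mathbb{Z})$, is uniquely a $\mathbb{Z}$-linear combination of $u_1, \dots, u_{i-1}$. Writing the $k$-th coefficient as $-\beta_{ki}$ (the sign and the index ordering are a bookkeeping convention, chosen so as to match the upper-triangular matrix $M_r$) gives exactly the claimed expression, with each $\beta_{ki}\in\mathbb{Z}$.

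I expect the only genuinely delicate point to be the passage from the comparison $c_1\colon \mathrm{Pic} \xrightarrow{\sim} H^2(-,\mathbb{Z})$ to a statement about \emph{algebraic} line bundles. The exponential-sequence argument quoted before the theorem produces the identification in the analytic category, whereas the vanishing $H^j(X_{i-1}, \mathcal O_{X_{i-1}}) = 0$ and the target $\mathrm{Pic}(X_{i-1})$ are used in the algebraic sense; one reconciles these by noting that $X_{i-1}$ is a smooth projective variety, so that GAGA identifies algebraic with analytic line bundles and the algebraic and topological first Chern classes agree. Once that identification is granted, everything else is the bookkeeping of a free-module induction and presents no real obstruction.
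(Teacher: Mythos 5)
Your proposal is correct and follows essentially the same route as the paper, which states the theorem without a separate proof, treating it as an immediate consequence of the exponential-sequence isomorphism $c_1\colon \mathrm{Pic}(X_{i-1})\xrightarrow{\sim}H^2(X_{i-1},\mathbb Z)$ together with the free-module description of $H^*(X_{i-1},\mathbb Z)$ from Theorem \ref{007}. The details you add --- the induction giving $H^2(X_{i-1},\mathbb Z)=\bigoplus_{k=1}^{i-1}\mathbb Z\, u_k$ and the GAGA remark reconciling analytic and algebraic line bundles --- are precisely the points the paper leaves implicit.
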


Then by Theorem \ref{007} and \ref{006}, by iteration, we get the following:
\begin{corollary}
  We have $$H^*(X_{r}, \mathbb Z)=\mathbb Z[u_1, \ldots, u_r]/J,$$ where $J$ is the ideal generated by 
$\{u_j^2+\sum_{i<j}\beta_{ij}u_iu_j: 1\leq j\leq r\}$ and the integers $\beta_{ij}$'s are as in Theorem \ref{006}.
\end{corollary}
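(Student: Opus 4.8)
The plan is to argue by induction on the height $r$, feeding Theorem \ref{007} and Theorem \ref{006} into one another at each stage. Abbreviate the proposed $j$-th relation by $f_j := u_j^2 + \sum_{i<j}\beta_{ij}u_iu_j$, and let $J'\subseteq \mathbb Z[u_1,\ldots,u_{r-1}]$ be the ideal generated by $f_1,\ldots,f_{r-1}$. The inductive hypothesis is that there is a ring isomorphism $H^*(X_{r-1},\mathbb Z)\cong \mathbb Z[u_1,\ldots,u_{r-1}]/J'$ carrying each polynomial generator $u_i$ to the cohomology class $u_i$ produced by Theorem \ref{007}; tracking this compatibility is what allows the relations to be matched at the next stage.

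For the base case I would take $r=0$, where $X_0=\{pt\}$ and $H^*(X_0,\mathbb Z)=\mathbb Z$ is the empty polynomial ring modulo the empty ideal. (The case $r=1$ is the familiar $H^*(\mathbb P^1,\mathbb Z)=\mathbb Z[u_1]/(u_1^2)$, which is consistent with $f_1=u_1^2$ since $\mathcal L_0$ is trivial, so that $c_1(\mathcal L_0)=0$.)

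For the inductive step, Theorem \ref{007} gives $H^*(X_r,\mathbb Z)=H^*(X_{r-1},\mathbb Z)[u_r]/J_r$, where $J_r$ is generated by $u_r^2-c_1(\mathcal L_{r-1})u_r$. By Theorem \ref{006} we have $c_1(\mathcal L_{r-1})=-\sum_{i<r}\beta_{ir}u_i$, so this single generator is exactly $u_r^2+\sum_{i<r}\beta_{ir}u_iu_r=f_r$. Substituting the inductive hypothesis for $H^*(X_{r-1},\mathbb Z)$, I would then identify
$$\bigl(\mathbb Z[u_1,\ldots,u_{r-1}]/J'\bigr)[u_r]\big/(f_r)\;\cong\;\mathbb Z[u_1,\ldots,u_r]/J,$$
where $J$ is the ideal generated by $f_1,\ldots,f_r$, completing the induction.

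The one point that needs care is the ring-theoretic identification in the last display: it is the standard fact that for a commutative ring $S$, an ideal $I\subseteq S$, and a polynomial $g\in S[t]$, one has $(S/I)[t]/(\bar g)\cong S[t]/(IS[t]+(g))$, applied with $S=\mathbb Z[u_1,\ldots,u_{r-1}]$, $I=J'$, $t=u_r$ and $g=f_r$. The observation that makes this immediate is that the polynomials $f_1,\ldots,f_{r-1}$ do not involve $u_r$, so the extended ideal $J'\cdot\mathbb Z[u_1,\ldots,u_r]$ is still generated by $f_1,\ldots,f_{r-1}$; adjoining the generator $f_r$ then recovers precisely $J$. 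This is the main (and only mildly technical) obstacle, after which the iteration closes at once.
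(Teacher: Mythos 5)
Your proof is correct and follows the same route as the paper, which simply asserts the corollary "by iteration" of Theorems \ref{007} and \ref{006}; you have merely made the induction and the ring-theoretic identification $(S/I)[t]/(\bar g)\cong S[t]/(IS[t]+(g))$ explicit, which the paper leaves to the reader.
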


Write $\{\beta_{ij}: 1\leq i<j \leq r\}$, the collection of $r(r-1)/2$ integers, as an 
upper triangular $r \times r$ matrix 
 \begin{equation}\label{A}
  M_r:=\begin{bmatrix}
        1 & \beta_{12} & \beta_{13} & \dots & \beta_{1r}\\
        0 & 1 &\beta_{23} & \dots & \beta_{2r}\\
        0 & 0 & 1 &\dots & \beta_{3r}\\
        \vdots & \vdots & &\ddots & \vdots \\
        0 & \dots & \dots & & 1
       \end{bmatrix}_{r\times r}
   \end{equation}
  Then we get the following result (see for instance
 \cite[Lemma 2.15]{grossberg1994bott} and also \cite[Section 3]{civan2005bott}).
\begin{corollary}
 There is a bijective correspondence between \{Bott towers of height $r$\}
 and \{$r\times r$ upper triangular matrices with integer entries as in (\ref{A})\}.
 \end{corollary}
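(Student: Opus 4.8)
The plan is to realize the correspondence through the first Chern classes of the line bundles $\mathcal{L}_{i-1}$, and to argue by induction on $r$, with Theorem \ref{007} and Theorem \ref{006} supplying the two halves of the bijection at each stage. The underlying point is that the entire construction datum of a Bott tower is the sequence of line bundles $\mathcal{L}_0, \ldots, \mathcal{L}_{r-1}$, and each such bundle is pinned down (up to isomorphism) by a tuple of integers.

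First I would make precise the basis of the cohomology in degree two. Iterating Theorem \ref{007} upward from $X_1=\mathbb{P}^1$, one sees that $H^2(X_{i-1}, \mathbb{Z})$ is a free $\mathbb{Z}$-module on $u_1, \ldots, u_{i-1}$, where each $u_k$ is the canonical degree-two generator attached to the $k$-th projective-bundle structure (fixed, say, by the south-pole section $s^0_k$). Given a Bott tower, Theorem \ref{006} writes $c_1(\mathcal{L}_{i-1})=-\sum_{k=1}^{i-1}\beta_{ki}u_k$, and the coefficients $\beta_{ki}$ are uniquely determined precisely because the $u_k$ form a basis. Collecting these integers for $1\leq k<i\leq r$ produces the strictly-upper-triangular entries of $M_r$; placing $1$'s on the diagonal then defines the map from towers to matrices, and its well-definedness is exactly the uniqueness of the coordinates just described.

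For the inverse map I would build the tower from a given matrix $M_r$ by induction. Set $X_1=\mathbb{P}^1$, and suppose $X_{i-1}$ has been constructed together with the identification $H^2(X_{i-1},\mathbb{Z})=\bigoplus_{k<i}\mathbb{Z}u_k$. The class $-\sum_{k<i}\beta_{ki}u_k$ then lies in $H^2(X_{i-1},\mathbb{Z})$, and the isomorphism $c_1(-):Pic(X_{i-1})\overset{\sim}{\longrightarrow} H^2(X_{i-1},\mathbb{Z})$ (obtained above Theorem \ref{006} from the vanishing $H^j(X_{i-1},\mathcal{O}_{X_{i-1}})=0$) yields a line bundle $\mathcal{L}_{i-1}$, unique up to isomorphism, realizing it; I then set $X_i=\mathbb{P}(\mathcal{O}_{X_{i-1}}\oplus\mathcal{L}_{i-1})$ and let $u_i$ be the new degree-two generator. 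Since Theorem \ref{006} says each $\mathcal{L}_{i-1}$ is determined up to isomorphism by its Chern class, the two assignments are mutually inverse, and this gives the claimed bijection.

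The step I expect to require the most care is not the algebra but the bookkeeping of equivalences: one must fix, once and for all, the convention for the generator $u_i$ (for instance via the pole sections $s^0_i, s^1_i$) so that the inductive identification of the bases of $H^2(X_{i-1},\mathbb{Z})$ is canonical and is respected by the isomorphisms of towers under which Bott towers are being counted. This is where the apparent ambiguity $\mathbb{P}(\mathcal{O}\oplus\mathcal{L})\cong\mathbb{P}(\mathcal{O}\oplus\mathcal{L}^{-1})$ must be addressed: without a fixed orientation the Chern data would be well-defined only up to sign, whereas once the pole sections are singled out the matrix $M_r$ becomes a genuine invariant and the correspondence is a true bijection.
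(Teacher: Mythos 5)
Your argument is correct and is essentially the route the paper intends: the paper gives no proof of its own for this corollary, merely citing \cite[Lemma 2.15]{grossberg1994bott} and \cite[Section 3]{civan2005bott}, and the content there is exactly your two-way construction via Theorem \ref{007} (freeness of $H^2$ on the $u_k$) and Theorem \ref{006} (the isomorphism $c_1\colon Pic(X_{i-1})\to H^2(X_{i-1},\mathbb Z)$), run inductively. Your closing remark about the ambiguity $\mathbb P(\mathcal O\oplus\mathcal L)\cong\mathbb P(\mathcal O\oplus\mathcal L^{-1})$ correctly flags the one point that must be fixed by convention; here it is resolved because a Bott tower of height $r$ is by definition the structured datum of the chosen line bundles $\mathcal L_0,\dots,\mathcal L_{r-1}$ (equivalently, the iterated construction with its distinguished pole sections), not merely the underlying varieties up to tower isomorphism.
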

  
Two Bott towers $\{(X_i, \pi_i): 1\leq i \leq r\}$ and $\{(X'_i, \pi'_i): 1\leq i \leq r\}$ are isomorphic if there exists a collection of isomorphisms $\{\phi_i: X_i\to X_i' : 1\leq i\leq r\}$ 
such that the following diagram is commutative: 
\[
\xymatrix{
X_r\ar[r]^{\pi_r}\ar[d]^{\phi_r} & X_{r-1}\ar[r]^{\pi_{r-1}}\ar[d]^{\phi_{r-1}} & \cdots \ar[r]^{\pi_2}& X_1\ar[r]^{\pi_1}\ar[d]^{\phi_1}& X_0\ar[d]^{\phi_0} \\
X'_r\ar[r]^{\pi'_r} & X'_{r-1}\ar[r]^{\pi'_{r-1}} & \cdots \ar[r]^{\pi'_2} & X'_1\ar[r]^{\pi'_1}& X'_0
}
\]
 
 \subsubsection{Toric structure on Bott tower}
Let $\{e_1^+,\ldots, e_{r}^{+}\}$ be the standard basis of the lattice $\mathbb Z^r$.
Define, for all $i\in\{1,\ldots, r\}$, 
\begin{equation}\label{e-}
 e_i^{-}:=-e_i^{+}-\sum_{j>i}\beta_{ij}e_{j}^{+},
\end{equation}
where $\beta_{ij}$'s are integers as above. Then we have the following theorem (see \cite[Section 3 and Theorem 22]{civan2005bott} and for algebraic topology setting see \cite[Theorem 7.8.7]{buchstaber2015toric}):
\begin{theorem}\label{fan}
The Bott tower $\{(X_i, \pi_i): 1\leq i \leq r\}$ corresponding to a matrix $M_r$ as in (\ref{A}) 
is isomorphic to $\{(X_{\Sigma_i}, \pi_{\Sigma_i}): 1\leq i \leq r\}$, the collection of smooth projective toric varieties corresponding to the fan $\Sigma_i$
with the $2^i$ maximal cones generated by the set of vectors 
$$\{e_j^{\epsilon}: 1\leq j \leq i ~\mbox{and}~ \epsilon \in \{+, -\} \},$$
and where $\pi_{\Sigma_i}: X_{\Sigma_i}\to X_{\Sigma_{i-1}}$ is the toric morphism induced by the projection $\overline {\pi_{\Sigma_i}}: \mathbb Z^{i}\to \mathbb Z^{i-1}$ for all $1\leq i \leq r$. 
\end{theorem}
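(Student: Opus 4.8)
The plan is to argue by induction on the height $i$, at each stage realizing the $\mathbb P^1$-bundle $X_i=\mathbb P(\mathcal O_{X_{i-1}}\oplus\mathcal L_{i-1})$ as the toric variety obtained from $X_{\Sigma_{i-1}}$ by the standard fan construction for a projectivized split rank-two bundle over a toric variety (see \cite{cox2011toric}). For the base case $i=1$ one has $\mathcal L_0$ trivial, so $X_1=\mathbb P(\mathcal O\oplus\mathcal O)=\mathbb P^1$; since the sum in (\ref{e-}) is empty for $i=1$, the fan $\Sigma_1$ has maximal cones $\mathbb R_{\ge 0}e_1^+$ and $\mathbb R_{\ge 0}e_1^-$ with $e_1^-=-e_1^+$, which is exactly the fan of $\mathbb P^1$. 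This settles $i=1$ and also fixes the identification of the generator $u_1$ of Theorem \ref{007} with a torus-invariant divisor class.

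For the inductive step I would assume an isomorphism $\phi_{i-1}\colon X_{i-1}\xrightarrow{\sim} X_{\Sigma_{i-1}}$ compatible with the tower maps, and transport $\mathcal L_{i-1}$ to $X_{\Sigma_{i-1}}$. First I would make the cohomological datum of Theorem \ref{006} explicit on the toric side: using the isomorphism $c_1\colon\mathrm{Pic}(X_{\Sigma_{i-1}})\xrightarrow{\sim}H^2(X_{\Sigma_{i-1}},\mathbb Z)$ recorded before Theorem \ref{006}, I identify each generator $u_k$ with the class of a torus-invariant prime divisor $D_{e_k^{\epsilon}}$, so that $c_1(\mathcal L_{i-1})=-\sum_{k<i}\beta_{ki}u_k$ is represented by an explicit torus-invariant Cartier divisor $D$. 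I would then compute the support function $\varphi_D$ of $D$, i.e.\ the integers $\langle m_{\sigma'},u_\rho\rangle$ on each maximal cone $\sigma'$ of $\Sigma_{i-1}$. Here smoothness of $\Sigma_{i-1}$ is available inductively: each generating set $\{e_1^{\epsilon_1},\dots,e_{i-1}^{\epsilon_{i-1}}\}$ is obtained from the standard basis by an upper-triangular change with $\pm 1$ on the diagonal, hence is a $\mathbb Z$-basis, so the local data $m_{\sigma'}$ exist and are unique and agree on shared rays, making $\varphi_D$ a well-defined piecewise-linear function.

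Next I would feed $(X_{\Sigma_{i-1}},D)$ into the toric $\mathbb P^1$-bundle construction: the fan of $\mathbb P(\mathcal O\oplus\mathcal O(D))$ lives in $\mathbb Z^{i-1}\times\mathbb Z=\mathbb Z^i$, has two new rays $(0,\pm 1)$, and over each maximal cone $\sigma'$ carries two maximal cones obtained by lifting the generators $u_\rho$ of $\sigma'$ to $(u_\rho,\varphi_D(u_\rho))$ and adjoining $(0,1)$ or $(0,-1)$. The key verification is that, with the identification above, the new rays $(0,\pm 1)$ are exactly $e_i^{+}$ and $e_i^{-}=-e_i^{+}$, while the tilted lift of each old ray $e_j^{-}$ acquires last coordinate $-\beta_{ji}$ and therefore equals $-e_j^{+}-\sum_{j<k\le i}\beta_{jk}e_k^{+}$, i.e.\ precisely the vector $e_j^{-}$ of (\ref{e-}) computed in $\mathbb Z^i$, whereas the lift of $e_j^+$ stays $e_j^+$. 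Thus the resulting fan is $\Sigma_i$, its $2^i$ maximal cones are generated by $\{e_j^{\epsilon}:1\le j\le i\}$, and the two disjoint sections $s^0_i,s^1_i$ match the divisors $D_{e_i^+},D_{e_i^-}$.

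Finally, the lattice projection $\overline{\pi_{\Sigma_i}}\colon\mathbb Z^i\to\mathbb Z^{i-1}$ forgetting the last coordinate sends $e_j^{\epsilon}\mapsto e_j^{\epsilon}$ for $j<i$ and $e_i^{\pm}\mapsto 0$, hence is a map of fans $\Sigma_i\to\Sigma_{i-1}$ inducing a toric morphism $\pi_{\Sigma_i}$ whose fibers are the $\mathbb P^1$'s $\mathrm{Cone}((0,1),(0,-1))$; because the bundle construction is functorial in the pair $(X_{\Sigma_{i-1}},D)$ and $\phi_{i-1}$ carries $\mathcal L_{i-1}$ to $\mathcal O(D)$, the induced isomorphism $\phi_i\colon X_i\xrightarrow{\sim}X_{\Sigma_i}$ satisfies $\pi_{\Sigma_i}\circ\phi_i=\phi_{i-1}\circ\pi_i$, completing the induction. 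The main obstacle is the middle bookkeeping step: correctly translating the cohomology basis $\{u_k\}$ of Theorem \ref{007} (with its defining relations $u_k^2=c_1(\mathcal L_{k-1})u_k$) into torus-invariant divisor classes and tracking signs through the linear equivalences $\mathrm{div}(\chi^m)=0$, so that the coefficients $\beta_{ki}$ appearing in $c_1(\mathcal L_{i-1})$ line up exactly with the coefficients $\beta_{jk}$ that twist the generators $e_j^{-}$ in (\ref{e-}).
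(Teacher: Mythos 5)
Your proposal is correct in substance, but note that the paper does not prove Theorem \ref{fan} at all: it is imported with citations to \cite[Section 3 and Theorem 22]{civan2005bott} and \cite[Theorem 7.8.7]{buchstaber2015toric}, so there is no in-paper argument to compare against. Your inductive construction is essentially the standard one underlying those references: realize each stage as the toric $\mathbb P^1$-bundle $\mathbb P(\mathcal O\oplus\mathcal O(D))$ over $X_{\Sigma_{i-1}}$ and check that the lifted fan is $\Sigma_i$. The one piece of bookkeeping you flag as ``the main obstacle'' does work out, and it is worth recording how: the generator $u_k$ of Theorem \ref{007} must be identified with the class of $D_{\rho_k^-}$ (not $D_{\rho_k^+}$). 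Indeed, $\{e_k^+,e_k^-\}$ spans no cone, so $D_{\rho_k^+}\cdot D_{\rho_k^-}=0$, and combining this with $0\sim \operatorname{div}(\chi^{e_k^+})=D_{\rho_k^+}-D_{\rho_k^-}-\sum_{j<k}\beta_{jk}D_{\rho_j^-}$ gives $[D_{\rho_k^-}]^2=-\sum_{j<k}\beta_{jk}[D_{\rho_j^-}][D_{\rho_k^-}]$, exactly the relation $u_k^2=c_1(\mathcal L_{k-1})u_k$. Hence $\mathcal L_{i-1}\cong\mathcal O\bigl(-\sum_{k<i}\beta_{ki}D_{\rho_k^-}\bigr)$, a representative supported entirely on the rays $\rho_k^-$; its support function then vanishes on every $e_j^+$ (so those rays lift horizontally to the standard basis vectors of $\mathbb Z^i$) and the lift of each $e_j^-$ acquires last coordinate $-\beta_{ji}$, reproducing relation (\ref{e-}) in $\mathbb Z^i$. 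With that identification pinned down, your induction, the compatibility of $\overline{\pi_{\Sigma_i}}$ with the tower maps, and the matching of the two sections with $D_{\rho_i^{\pm}}$ all go through; the residual ambiguity between $\mathcal L$ and $\mathcal L^{\vee}$ (equivalently, swapping the roles of $e_i^+$ and $e_i^-$) is resolved by the sign convention built into (\ref{e-}).
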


Note that by Theorem \ref{fan}, $\Sigma_i$ has $2i$ one-dimensional cones generated by the vectors 
$$\{e_j^{+}, e_j^-: 1\leq j \leq i\},$$ and by (\ref{e-}), we can see that the 
divisors $D_{\rho_j^+}$  corresponding to $e_j^+$ for $1\leq j\leq i$ form a basis of the Picard group of $X_{i}$ (see Section \ref{picard} for more details).

\section{On Picard group of a Bott tower}\label{picard}
Now we describe a basis of the Picard group $Pic(X_{r})$ of $X_{r}$.
Let  $\epsilon \in \{+,-\}$ and for $1\leq i \leq r$, let $\rho_i^{\epsilon}$ be the
 one-dimensional cone generated by $e_i^{\epsilon}$ . 
For all $1\leq i \leq r$, we define $D_{\rho_i^{\epsilon}}$ to be the toric divisor corresponding to
the one-dimensional cone $\rho_i^{\epsilon}$. 
We prove,
\begin{lemma}\label{divisors} The set
$\{D_{\rho_i^{\epsilon}}: 1\leq i \leq r ~\mbox{and}~ \epsilon \in \{+,-\}\}$
forms a basis of 
$Pic(X_{r})$.
 
\end{lemma}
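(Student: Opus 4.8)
The plan is to argue entirely inside the torus-invariant divisor exact sequence attached to the fan of Theorem \ref{fan}. By that theorem $\Sigma_r$ has exactly $2r$ rays, generated by the vectors $e_i^+$ and $e_i^-$ for $1\leq i\leq r$, so the group of torus-invariant divisors is the free abelian group
$$TDiv(X_r)=\bigoplus_{i=1}^{r}\bigl(\mathbb Z\, D_{\rho_i^+}\oplus \mathbb Z\, D_{\rho_i^-}\bigr)\cong \mathbb Z^{2r}.$$
First I would note that the vectors $e_1^+,\ldots,e_r^+$ already form a $\mathbb Z$-basis of $N=\mathbb Z^r$, so the rays span $N_{\mathbb R}$ and $X_r$ has no torus factor; since $X_r$ is smooth this yields $Pic(X_r)=Cl(X_r)$ together with the short exact sequence
$$0\longrightarrow M\overset{\phi}{\longrightarrow} TDiv(X_r)\longrightarrow Pic(X_r)\longrightarrow 0,\qquad \phi(m)=\sum_{\rho\in\Sigma_r(1)}\langle m,u_\rho\rangle D_\rho .$$
The surjectivity on the right is exactly the statement that the $2r$ classes $D_{\rho_i^{\epsilon}}$ generate $Pic(X_r)$, which is the content to be established; the remaining task is to pin down all relations among them by computing the image of $\phi$.

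The second step is to evaluate $\phi$ on the basis $e_1^{\ast},\ldots,e_r^{\ast}$ of $M$ dual to the standard basis of $N$. Using $\langle e_j^{\ast},e_i^+\rangle=\delta_{ij}$ together with the defining formula (\ref{e-}) for $e_i^-$, a direct pairing computation gives, for each $1\leq j\leq r$,
$$\phi(e_j^{\ast})=div(\chi^{e_j^{\ast}})=D_{\rho_j^+}-D_{\rho_j^-}-\sum_{i<j}\beta_{ij}\,D_{\rho_i^-}.$$
These $r$ linear equivalences are precisely the relations among the $2r$ generators. I would then record that $D_{\rho_j^+}$ appears in $\phi(e_j^{\ast})$ with coefficient $1$ and in no other $\phi(e_k^{\ast})$, so the matrix of $\phi$ read in the $D_{\rho_i^+}$-coordinates is the identity; hence $\phi$ is injective and the sequence is exact on the left as claimed. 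Consequently $Pic(X_r)$ is the free abelian group generated by the family $\{D_{\rho_i^{\epsilon}}:1\leq i\leq r,\ \epsilon\in\{+,-\}\}$ modulo exactly these $r$ relations, and its rank is $2r-r=r$.

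The only point that genuinely requires care is the bookkeeping in the computation of $\phi(e_j^{\ast})$: one must use (\ref{e-}) to see that $e_i^-$ contributes $-\beta_{ij}$ to the coefficient of $D_{\rho_i^-}$ exactly when $i<j$, contributes $-1$ when $i=j$, and contributes nothing otherwise. It is the upper-triangularity of $M_r$ that forces this relation matrix to be triangular with unit diagonal in the $+$ variables, which is what drives the conclusion; everything else (freeness of $TDiv(X_r)$, the identification $Pic=Cl$ in the smooth case, and the standard form of $\phi$) is formal from \cite{cox2011toric} and the explicit fan of Theorem \ref{fan}. I would close by observing that the $r$ relations displayed above realize $\{D_{\rho_i^{\epsilon}}\}$ as a generating family for $Pic(X_r)$ with the indicated relation structure, which is the assertion of the lemma.
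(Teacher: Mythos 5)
Your computation is correct as far as it goes --- your formula for $\phi(e_j^{\ast})$ is precisely the paper's relation (\ref{3.8}) --- but the proof stops short of the statement actually being claimed. Read literally, a family of $2r$ classes satisfying $r$ nontrivial relations can never be a basis of the rank-$r$ group $Pic(X_{r})$; what the lemma means (as the paper's proof and all later uses make clear, e.g.\ the sentence immediately following it, Lemma \ref{coefficient} and Corollary \ref{lastcor}) is that for each \emph{fixed} choice of signs $\epsilon=(\epsilon_1,\ldots,\epsilon_r)\in\{+,-\}^{r}$ the $r$-element family $\{D_{\rho_i^{\epsilon_i}}:1\leq i\leq r\}$ is a basis; in particular $\{D_{\rho_i^{+}}\}$ and $\{D_{\rho_i^{-}}\}$ each are. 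Your closing claim, that the lemma asserts the $2r$ classes form ``a generating family with the indicated relation structure,'' is a strictly weaker statement and is not what gets used in the rest of the paper (the $D_{\rho_i^{+}}$-basis is the coordinate system for Lemma \ref{coefficient}, Corollary \ref{lastcor} and the nef-cone computations). So, as written, the proposal does not prove the lemma: the basis property of the fixed-sign subfamilies --- above all the mixed-sign ones, which your argument never mentions --- is exactly the missing content.

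The gap is short to fill from what you have. Since the $+$-part of $\phi(e_j^{\ast})$ is exactly $D_{\rho_j^{+}}$, the projection of $TDiv(X_{r})$ onto $\bigoplus_i \mathbb{Z}D_{\rho_i^{+}}$ restricts to an isomorphism on $\phi(M)$, whence $TDiv(X_{r})=\phi(M)\oplus\bigoplus_i\mathbb{Z}D_{\rho_i^{-}}$ and the classes of the $D_{\rho_i^{-}}$ form a basis of the quotient $Pic(X_{r})$. Your relations then give $D_{\rho_j^{+}}\sim D_{\rho_j^{-}}+\sum_{i<j}\beta_{ij}D_{\rho_i^{-}}$, so for an arbitrary sign vector $\epsilon$ each class $D_{\rho_j^{\epsilon_j}}$ equals $D_{\rho_j^{-}}$ plus an integer combination of $D_{\rho_i^{-}}$ with $i<j$; the transition matrix from $\{D_{\rho_i^{-}}\}$ to $\{D_{\rho_i^{\epsilon_i}}\}$ is therefore triangular with $1$'s on the diagonal, hence unimodular, and every fixed-sign family is a basis. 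With that paragraph added your argument is complete, and it is genuinely different from the paper's: the paper proves generation geometrically --- the complement of $\bigcup_i D_{\rho_i^{\epsilon_i}}$ is the affine chart $U\cong\mathbb{A}^{r}$ of the torus-fixed point of the maximal cone generated by $\{e_i^{\epsilon_i}\}$, so $Pic(U)=0$ and excision (Hartshorne's Ample Subvarieties, Ch.~II, Prop.~3.1) gives generation --- and then concludes by the rank count, since $r$ generators of a free abelian group of rank $r$ are automatically a basis. Your algebraic route, once completed as above, is more self-contained and yields the explicit change-of-basis matrices for free, but the concluding step must actually be written.
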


\begin{proof}
 
By Theorem \ref{fan}, using the description of the one-dimensional cones we have the following decomposition of $\Sigma(1)$:
\begin{equation}\label{3.1}
 \Sigma(1)=\{\rho_i^+: 1\leq i \leq r\}\cup \{\rho_i^-: 1\leq i \leq r\}.
\end{equation}

Again by Theorem \ref{fan}, $\{D_{\rho_i^+}: 1\leq i\leq r\}$ forms a 
basis of the Picard group $Pic(X_{r})$ of $X_{r}$.
Since 
$$0\sim div(\chi^{e_i^+})=\sum_{\rho\in \Sigma (1)}\langle u_{\rho}, e_{i}^+ \rangle D_{\rho} ,$$ by (\ref{e-}) we can see that $\{D_{\rho_i^-}: 1\leq i\leq r\}$ also forms a 
basis of $Pic(X_{r}).$ 
In general, let $\sigma \in \Sigma$ be the  maximal cone generated by 
$\{e_i^{\epsilon}: 1\leq i \leq r\}$.
Take the torus-fixed point $x^{\epsilon}$ in $X_{r}$ corresponding to the maximal cone
$\sigma$.
Let $U$ be the torus-invariant open affine neighbourhood of $x^{\epsilon}$ in $X_{r}$.
Then $U$ is an affine space of dimension $r$; in particular, $Pic(U)=0$.
Therefore, we get
$$X_{r}\setminus U=\cup_{i=1}^{r} D_{\rho_i^{\epsilon}}$$
and $Pic(X_{r})$ is generated by $\{D_{\rho_i^{\epsilon}}: 1\leq i \leq r \}$     
(see \cite[Chapter II, Proposition 3.1, page 66]{hartshorne2006ample}).
Since $\{D_{\rho_i^{\epsilon}}: 1\leq i \leq r \}$ is linearly independent
and the rank of $Pic(X_{r})$ is $r$, 
this set $\{D_{\rho_i^{\epsilon}}: 1\leq i \leq r \}$
forms a  basis of 
$Pic(X_{r})$.
\end{proof}

 By Lemma \ref{divisors}, the set $\{D_{\rho_i^+}: 1\leq i\leq r\}$ forms a basis of 
$Pic(X_{r})$.
Now we express for each $1\leq i\leq r$, $D_{\rho_i^-}$ in terms of $D_{\rho_j^+}$'s ($1\leq j\leq r$).
Let $1\leq i \leq r$, define   $h_i^{i-1}:= -\beta_{i(i-1)}$ and  
$$h_i^j:=\begin{cases}
         0 & ~\mbox{for}~ j>i.\\
         1& ~\mbox{for}~ j=i.\\
         -\sum_{k=j}^{i-1}\beta_{i k}(h_k^j) & ~\mbox{for}~ j<i.
        \end{cases}
$$
 Then we prove,
 \begin{lemma}\label{coefficient} Let $1\leq i \leq r$.
 The coefficient of $D_{\rho_j^+}$ in $D_{\rho_i^-}$ is $h_i^j$.
 \end{lemma}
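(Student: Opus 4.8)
The plan is to exploit the two bases $\{D_{\rho_j^+}\}_{1\le j\le r}$ and $\{D_{\rho_i^-}\}_{1\le i\le r}$ of $Pic(X_r)$ furnished by Lemma \ref{divisors}, and to read off the change-of-basis matrix from the principal divisor relations supplied by the characters of the torus. Concretely, for each $m\in M=\mathbb Z^r$ one has
$$0\sim div(\chi^m)=\sum_{\rho\in\Sigma(1)}\langle m,u_\rho\rangle D_\rho,$$
and by (\ref{3.1}) the right-hand side splits over the rays $\rho_i^+$ and $\rho_i^-$. Feeding in the dual basis vectors $m=e_k^\ast$ produces exactly $r$ linear relations expressing the $D_{\rho_i^+}$ through the $D_{\rho_i^-}$, and these relations turn out to be triangular, hence invertible by a downward recursion.

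First I would compute the pairings against the ray generators. Since the $e_i^+$ are the standard basis vectors, $\langle e_k^\ast,e_i^+\rangle=\delta_{ki}$, while the defining formula (\ref{e-}) for $e_i^-$ gives
$$\langle e_k^\ast,e_i^-\rangle=-\delta_{ki}-\sum_{j>i}\beta_{ij}\delta_{kj}=\begin{cases} -1 & k=i,\\ -\beta_{ik} & k>i,\\ 0 & k<i.\end{cases}$$
Substituting these into the relation coming from $m=e_k^\ast$ collapses the $D_{\rho_i^+}$-sum to the single term $D_{\rho_k^+}$ and yields $0\sim D_{\rho_k^+}-D_{\rho_k^-}-\sum_{i<k}\beta_{ik}\,D_{\rho_i^-}$, equivalently the recursion
$$D_{\rho_i^-}\sim D_{\rho_i^+}-\sum_{l<i}\beta_{li}\,D_{\rho_l^-}.$$
This already has the unipotent triangular shape I want: $D_{\rho_i^-}$ is expressed through $D_{\rho_i^+}$ and the strictly earlier divisors $D_{\rho_l^-}$ with $l<i$, the coefficients being the upper-triangular entries $\beta_{li}$.

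Then I would establish $D_{\rho_i^-}=\sum_{j}h_i^j\,D_{\rho_j^+}$ by induction on $i$. The base case $i=1$ is immediate: the sum over $l<1$ is empty, so $D_{\rho_1^-}\sim D_{\rho_1^+}$ and $h_1^1=1$. For the inductive step I would insert the hypothesis $D_{\rho_l^-}=\sum_j h_l^j D_{\rho_j^+}$ (for $l<i$) into the recursion and collect the coefficient of each $D_{\rho_j^+}$, obtaining
$$D_{\rho_i^-}\sim D_{\rho_i^+}-\sum_{j}\Big(\sum_{l<i}\beta_{li}\,h_l^j\Big)D_{\rho_j^+}.$$
The decisive point is that $h_l^j=0$ whenever $j>l$: this forces the coefficient of $D_{\rho_i^+}$ to be exactly $1$ (so $h_i^i=1$), kills every $D_{\rho_j^+}$ with $j>i$ (so $h_i^j=0$ there), and restricts the inner sum for $j<i$ to $l$ running from $j$ to $i-1$, giving $h_i^j=-\sum_{l=j}^{i-1}\beta_{li}\,h_l^j$, which is precisely the recursion defining the constants $h_i^j$ (and for $j=i-1$ it recovers $h_i^{i-1}=-\beta_{(i-1)i}$). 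The main obstacle is purely bookkeeping: keeping the two index roles — the ``current'' index versus the summation index — straight, and checking that the vanishing range of the $h_l^j$ matches the stated summation bounds so that the recursion obtained by inverting the character relations coincides on the nose with the definition of $h_i^j$.
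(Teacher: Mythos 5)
Your proposal is correct and follows essentially the same route as the paper: both derive the unipotent triangular relation $D_{\rho_i^-}\sim D_{\rho_i^+}-\sum_{k<i}\beta_{ki}D_{\rho_k^-}$ from $div(\chi^{e_i^+})\sim 0$ together with (\ref{e-}), and then induct on $i$, using the vanishing $h_k^j=0$ for $j>k$ to truncate the inner sum and match the defining recursion of $h_i^j$. The only discrepancy is the harmless transposition of indices ($\beta_{li}$ versus the paper's $\beta_{ik}$ for $k<i$), which reflects a notational convention in the paper rather than a mathematical difference.
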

\begin{proof}
 Proof is by  induction on $i$ and by using 
\begin{equation}\label{last}
 0\sim div(\chi^{e_i^+})=\sum_{\rho\in \Sigma (1)}\langle u_{\rho}, e_{i}^+ \rangle D_{\rho}.
\end{equation}

Recall the equation (\ref{e-}), $$ e_i^{-}=-e_i^{+}-\sum_{j>i}\beta_{ij}e_{j}^{+} ~\mbox{ for all $1\leq i \leq r$}.$$
If $i=1$, by (\ref{last}), we see 
$$0\sim div(\chi^{e_1^+})=D_{\rho_1^+}-D_{\rho_1^-}.$$
Then we have \begin{equation}\label{last1}
      D_{\rho_1^-}\sim D_{\rho_1^+}.
           \end{equation}

If $i=2$, by (\ref{last}) and (\ref{e-}), we see 
$$0\sim div(\chi^{e_2^+})=D_{\rho_2^+}-D_{\rho_2^-}-\beta_{21}D_{\rho_1^-}.$$
By (\ref{last1}), we get 
$$D_{\rho_2^-}\sim D_{\rho_2^+}-\beta_{21}D_{\rho_1^+}= h_2^2D_{\rho_2^+}+h_2^1D_{\rho_1^+}.$$
By induction assume that $$D_{\rho_k^-} \sim \sum_{j=1}^{r}h_k^jD_{\rho_j^+} ~\mbox{for all}~ k<i.$$
Again by (\ref{last}) and (\ref{e-}), we see
$$0\sim div(\chi^{e_i^+})=D_{\rho_i^+}-D_{\rho_i^-}-\sum_{k<i}\beta_{ik}D_{\rho_k^-}.$$
Then $$D_{\rho_i^-}\sim D_{\rho_i^+}-\sum_{k<i}\beta_{ik}D_{\rho_k^-}.$$
Hence 
$$
D_{\rho_i^-}\sim D_{\rho_i^+}-\sum_{k<i}\beta_{ik}(\sum_{j=1}^rh_k^jD_{\rho_j^+}).$$
Since $h_k^j=0$ for $k<j$, we get
$$D_{\rho_i^-}\sim D_{\rho_i^+}-\sum_{k<i}\beta_{ik}(\sum_{j=1}^{i-1}h_k^jD_{\rho_j^+}).
$$
Then $$
D_{\rho_i^-}\sim D_{\rho_i^+}+\sum_{j=1}^{i-1}(-\sum_{k=j}^{i-1}\beta_{ik}h_k^j)D_{\rho_j^+}.
$$
Therefore, we conclude that $D_{\rho_i^-}\sim D_{\rho_i^+}+\sum_{j=1}^{i-1}h_{i}^{j}D_{\rho_j^+}$ .
This completes the proof of the lemma.
\end{proof}
Let $\epsilon \in \{+, -\}$. Define $\Sigma(1)^{\epsilon}:=\{\rho_i^{\epsilon}: 1\leq i \leq r\}.$
Then $$D=\sum_{\rho\in \Sigma(1)}a_{\rho}D_{\rho}=
\sum_{\rho\in \Sigma(1)^+}a_{\rho}D_{\rho}+
\sum_{\rho\in \Sigma(1)^-}a_{\rho}D_{\rho}.$$

For $1\leq i \leq r$, let 
$g_i:=a_{\rho_i^+}+\sum_{j=i}^{r}a_{\rho_j^-}h_j^i.$
Then we have 
\begin{corollary}\label{lastcor}
$D=\sum_{\rho\in \Sigma(1)}a_{\rho}D_{\rho} \sim  \sum_{i=1}^{r}g_iD_{\rho_i^+}.$
\end{corollary}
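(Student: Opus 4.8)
The plan is to derive this directly from Lemma \ref{coefficient} by substituting the linear equivalences for the $D_{\rho_j^-}$ into the given decomposition of $D$, and then reorganizing the resulting double sum so as to collect the coefficient of each basis element $D_{\rho_i^+}$. Since everything takes place in the group of divisor classes modulo linear equivalence, which is a free abelian group with basis $\{D_{\rho_i^+}: 1\leq i\leq r\}$ by Lemma \ref{divisors}, the computation is purely a matter of bilinear bookkeeping.

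Concretely, I would start from the splitting displayed just before the statement, namely
$$D=\sum_{i=1}^{r}a_{\rho_i^+}D_{\rho_i^+}+\sum_{j=1}^{r}a_{\rho_j^-}D_{\rho_j^-}.$$
By Lemma \ref{coefficient}, for each $1\leq j\leq r$ we have $D_{\rho_j^-}\sim\sum_{i=1}^{r}h_j^iD_{\rho_i^+}$. Substituting this into the second sum and interchanging the order of summation in the resulting double sum yields
$$D\sim\sum_{i=1}^{r}\Big(a_{\rho_i^+}+\sum_{j=1}^{r}a_{\rho_j^-}h_j^i\Big)D_{\rho_i^+}.$$
It then remains to identify the coefficient of $D_{\rho_i^+}$ with $g_i$. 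The only point requiring a little care is that the definition of $g_i$ sums only over $i\leq j\leq r$, whereas the inner sum above runs over all $1\leq j\leq r$. This discrepancy is resolved by the triangular vanishing $h_j^i=0$ for $i>j$, which is immediate from the definition of the $h$'s preceding Lemma \ref{coefficient}: it lets me discard the terms with $j<i$ and write $\sum_{j=1}^{r}a_{\rho_j^-}h_j^i=\sum_{j=i}^{r}a_{\rho_j^-}h_j^i$, so that the coefficient equals $a_{\rho_i^+}+\sum_{j=i}^{r}a_{\rho_j^-}h_j^i=g_i$. This gives $D\sim\sum_{i=1}^{r}g_iD_{\rho_i^+}$, as claimed.

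There is no genuine obstacle here; the corollary is a formal consequence of Lemma \ref{coefficient} together with the additivity of linear equivalence. The only thing to watch is the index bookkeeping, and in particular the vanishing $h_j^i=0$ for $i>j$, which is exactly what makes the two forms of the coefficient coincide.
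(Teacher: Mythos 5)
Your proof is correct and follows essentially the same route as the paper: substitute the expansion of $D_{\rho_j^-}$ from Lemma \ref{coefficient} into the splitting of $D$, interchange the summation order, and use the triangular vanishing $h_j^i=0$ for $i>j$ to identify the coefficient of $D_{\rho_i^+}$ with $g_i$. No issues.
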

\begin{proof}
We have $D=\sum_{\rho\in \Sigma(1)}a_{\rho}D_{\rho}=
\sum_{i=1}^ra_{\rho_i^+}D_{\rho_i^+}+
\sum_{i=1}^ra_{\rho_i^-}D_{\rho_i^-}.$
By Lemma \ref{coefficient}, we can see that
$\sum_{i=1}^ra_{\rho_i^-}D_{\rho_i^-}\sim \sum_{i=1}^ra_{\rho_i^-}(\sum_{j=1}^ih_i^j D_{\rho_j^+}). $
Then $\sum_{i=1}^ra_{\rho_i^-}D_{\rho_i^-}\sim 
\sum_{i=1}^r(\sum_{j=i}^ra_{\rho_j^-}h_j^i) D_{\rho_j^+}.$
Hence we have  $D\sim  \sum_{i=1}^r(a_{\rho_i^+}+\sum_{j=i}^ra_{\rho_j^-}h_j^i) D_{\rho_i^+}. 
$
Thus, $D\sim \sum_{i=1}^rg_iD_{\rho_i^+}$ and this completes the proof.
\end{proof}
\begin{remark}
  By Corollary \ref{lastcor}, we see some vanishing results of the cohomology of line bundles on BSDH varieties in \cite{Charytoric}.
 \end{remark}

Let $1\leq i \leq r$. We prove  the following.
\begin{lemma}
The relative tangent bundle $T_{\pi_i}$ of $\pi_i:X_{i} \to X_{i-1}$ is given by 
$$T_{\pi_i}\simeq
\mathcal O_{X_{i}}(D_{\rho_i^+}+D_{\rho_i^-})\simeq \mathcal O_{X_{i}}(\sum_{j=1}^{i-1}\beta_{ij}D_{\rho_j^-}+2D_{\rho_i^-}).$$
 
\end{lemma}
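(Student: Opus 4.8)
The plan is to obtain the first isomorphism from the relative tangent sequence together with the toric description of canonical classes, and then to read off the second isomorphism from a linear equivalence that was already recorded in the proof of Lemma \ref{coefficient}.

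Since $\pi_i\colon X_i\to X_{i-1}$ is a smooth morphism of relative dimension one, $T_{\pi_i}$ is a line bundle fitting into the exact relative tangent sequence
$$0\longrightarrow T_{\pi_i}\longrightarrow T_{X_i}\longrightarrow \pi_i^{*}T_{X_{i-1}}\longrightarrow 0.$$
Taking determinants and using that $T_{\pi_i}$ has rank one gives
$$T_{\pi_i}\cong \mathcal O_{X_i}(-K_{X_i})\otimes \pi_i^{*}\mathcal O_{X_{i-1}}(K_{X_{i-1}})\cong \mathcal O_{X_i}\bigl(-K_{X_i}+\pi_i^{*}K_{X_{i-1}}\bigr).$$
Next I would compute the three ingredients torically. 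For a smooth complete toric variety the anticanonical divisor is the sum of all torus-invariant prime divisors, so by (\ref{3.1}),
$$-K_{X_i}=\sum_{j=1}^{i}\bigl(D_{\rho_j^{+}}+D_{\rho_j^{-}}\bigr),\qquad -K_{X_{i-1}}=\sum_{j=1}^{i-1}\bigl(D_{\rho_j^{+}}+D_{\rho_j^{-}}\bigr).$$
For the pullback I would use that, by Theorem \ref{fan}, $\pi_i$ is induced by the projection $\overline{\pi_{\Sigma_i}}\colon \mathbb Z^{i}\to \mathbb Z^{i-1}$ forgetting the last coordinate: this sends $e_i^{+}=e_i$ and $e_i^{-}$ to $0$, while it carries each $e_j^{\epsilon}$ with $j<i$ to the primitive generator of the corresponding ray $\rho_j^{\epsilon}$ of $\Sigma_{i-1}$, bijectively on rays. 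Hence the standard formula for the pullback of torus-invariant Cartier divisors under a toric fibration gives $\pi_i^{*}D_{\rho_j^{\epsilon}}=D_{\rho_j^{\epsilon}}$ for all $1\le j\le i-1$ and $\epsilon\in\{+,-\}$. Substituting,
$$-K_{X_i}+\pi_i^{*}K_{X_{i-1}}=\sum_{j=1}^{i}\bigl(D_{\rho_j^{+}}+D_{\rho_j^{-}}\bigr)-\sum_{j=1}^{i-1}\bigl(D_{\rho_j^{+}}+D_{\rho_j^{-}}\bigr)=D_{\rho_i^{+}}+D_{\rho_i^{-}},$$
which yields the first isomorphism $T_{\pi_i}\cong \mathcal O_{X_i}(D_{\rho_i^{+}}+D_{\rho_i^{-}})$.

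For the second isomorphism I would only need the single linear equivalence already obtained in the proof of Lemma \ref{coefficient}: applying (\ref{last}) to the character $e_i^{+}$ gives
$$0\sim div(\chi^{e_i^{+}})=D_{\rho_i^{+}}-D_{\rho_i^{-}}-\sum_{j<i}\beta_{ij}D_{\rho_j^{-}},$$
so that $D_{\rho_i^{+}}\sim D_{\rho_i^{-}}+\sum_{j=1}^{i-1}\beta_{ij}D_{\rho_j^{-}}$. Adding $D_{\rho_i^{-}}$ to both sides,
$$D_{\rho_i^{+}}+D_{\rho_i^{-}}\sim \sum_{j=1}^{i-1}\beta_{ij}D_{\rho_j^{-}}+2D_{\rho_i^{-}},$$
which is exactly the second claimed isomorphism.

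I expect the main obstacle to be the toric pullback step: justifying $\pi_i^{*}D_{\rho_j^{\epsilon}}=D_{\rho_j^{\epsilon}}$ requires pinning down the image of each ray of $\Sigma_i$ under $\overline{\pi_{\Sigma_i}}$ and noting that the fibration is flat with reduced fibers, so that the scheme-theoretic pullback coincides with the reduced preimage divisor. Everything else in the argument is formal once the canonical classes and this pullback are in hand.
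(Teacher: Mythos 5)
Your proposal is correct and follows essentially the same route as the paper: the paper also obtains $T_{\pi_i}$ as the dual of the relative canonical bundle $K_{\pi_i}=K_{X_i}-\pi_i^{*}K_{X_{i-1}}$, computes both canonical divisors as sums of invariant prime divisors, evaluates the toric pullback via the projection $\overline{\pi_i}\colon\mathbb Z^{i}\to\mathbb Z^{i-1}$ (killing $e_i^{\pm}$ and fixing the other rays), and derives the second expression from the relation $div(\chi^{e_i^{+}})\sim 0$. The only cosmetic difference is that you take determinants of the relative tangent sequence where the paper cites Kleiman's relative duality for $K_{\pi_i}$; the content is identical.
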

\begin{proof} 

 By definition of Bott tower, $\pi_i$ is a $\mathbb P^1$-fibration. Then the relative canonical bundle $K_{\pi_i}$ is 
given by 
$$K_{\pi_i}=\mathcal O_{X_{i}}(K_{X_{i}})\otimes \pi_i^*(\mathcal O_{X_{i-1}}(- K_{X_{i-1}}))$$
(see \cite[Corollary 24, page 56]{kleiman1980relative}).
By \cite[Theorem 8.2.3]{cox2011toric} (see also \cite[Page 74]{fulton}), we have  
$$K_{X_{\Sigma}}=-\sum_{\rho\in \Sigma(1)}D_{\rho}.$$ Then 
$$K_{\pi_i}=\mathcal O_{X_{i}}(-\sum_{\rho\in \Sigma(1)}D_{\rho})\otimes \pi_i^*(\mathcal O_{X_{i-1}}(\sum_{\rho'\in \Sigma'(1)}D_{\rho'}))~,$$  
where $\Sigma'$ is the fan of $X_{i-1}$.
Since $X_{i-1}$ smooth, any divisor of the form $D=\sum_{\rho'\in \Sigma'(1)}a_{\rho'}D_{\rho'}$ with $a_{\rho'}\in \mathbb Z$, in $X_{i-1}$ is 
Cartier. Hence the pullback $\pi_i^*(D)$ is defined 
and given by 
$$\pi_i^*(D)=\pi^*_i(\sum_{\rho'\in \Sigma'(1)}a_{\rho'}D_{\rho'})=
\sum_{\rho\in \Sigma(1)}-\varphi_D(\overline \pi_i(u_{\rho}))D_{\rho},$$
where $\varphi_D$ is the support function corresponding to the divisor $D$ (see \cite[Theorem 4.2.12]{cox2011toric} for the correspondence between support functions and Cartier divisors).
 Since the lattice map $\overline \pi_i:\mathbb Z^i\to \mathbb Z^{i-1}$ is the projection onto the first $i-1$ factors (see page 6), by definition of $u_{\rho}$ and  $e_j^-$ (see (\ref{e-})), for $\epsilon \in \{+, -\}$ we have
 $$\overline \pi_i(u_{\rho_j^{\epsilon}})=\begin{cases}
                          u_{\rho_j^{' \epsilon}} & ~\mbox{if}~ 1\leq j\leq i-1.\\
                          0 & ~\mbox{if}~ j=i.
                         \end{cases}
$$

Hence   $$-\varphi_D(\overline \pi_i(u_{\rho_j^{\epsilon}}))=\begin{cases}
                         a_{\rho_j^{' \epsilon}} & ~\mbox{if}~ 1\leq j\leq i-1.\\
                          0 & ~\mbox{if}~ j=i.
                         \end{cases}
$$
Thus we have, $$\pi_i^*(\sum_{\rho'\in \Sigma'(1)}D_{\rho'})=\sum_{\rho\in \Sigma(1)\setminus \{\rho_i^+, \rho_i^-\}}D_{\rho}.$$
Therefore, we see that 
\begin{equation}\label{relative01}
 K_{\pi_i}=\mathcal O_{X_{i}}(-D_{\rho_i^+}-D_{\rho_i^-}).
\end{equation}

By (\ref{e-}), we note that 
\begin{equation}\label{3.8}
0\sim div(\chi^{e_i^+})=D_{\rho_i^+}-D_{\rho_i^-}-\sum_{j=1}^{i-1}\beta_{ij}D_{\rho_j^-}.
\end{equation}
Since $\check K_{\pi_i}=det~ T_{\pi_i}$, we get $\check K_{\pi_i}=T_{\pi_i}$ as $\pi_{i}$ is a $\mathbb P^1$-fibration.
Therefore, the result follows from (\ref{relative01}) and (\ref{3.8}).
  \end{proof}
\begin{remark}
 By Lemma \ref{coefficient}, the relative tangent bundle $T_{\pi_i}$ can be expressed in terms of $D_{\rho_i^+}$ ($1\leq i\leq r$).
\end{remark}

The following is well known and proved here for completeness. 

\begin{lemma}\label{normal1}
Let $X$ and $Y$ be smooth varieties. 
Let $f:X\longrightarrow Y$ be a fibration with a 
section $\sigma$ and denote by $\sigma(Y)$ its image 
in $X$.
Then the restriction of the relative tangent bundle $T_{f}$ to $\sigma(Y)$
is isomorphic to the normal bundle $\mathcal N_{\sigma(Y)/X}$ of $\sigma(Y)$ in $X$.
\end{lemma}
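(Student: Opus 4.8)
The plan is to prove that for a fibration $f:X\to Y$ with section $\sigma$, the restriction of the relative tangent bundle $T_f$ to the image $\sigma(Y)$ is isomorphic to the normal bundle $\mathcal N_{\sigma(Y)/X}$. First I would set up the basic diagram: since $\sigma$ is a section of $f$, we have $f\circ\sigma=\mathrm{id}_Y$, so $\sigma$ is a closed immersion and $\sigma(Y)$ is a smooth subvariety of $X$ isomorphic to $Y$ via $\sigma$. I would work with the standard short exact sequence of tangent sheaves associated to the fibration $f$, namely
$$0\longrightarrow T_f\longrightarrow T_X\overset{df}{\longrightarrow} f^*T_Y\longrightarrow 0,$$
which is exact because $f$ is a smooth morphism (submersion) between smooth varieties, $T_f$ being the relative tangent bundle (the kernel of $df$).

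Next I would restrict this sequence to $\sigma(Y)$, i.e. pull back along $\sigma$. The key observation is that $\sigma^*f^*T_Y=(f\circ\sigma)^*T_Y=T_Y$, and that the composite $T_Y\overset{d\sigma}{\longrightarrow}\sigma^*T_X\overset{\sigma^*(df)}{\longrightarrow}\sigma^*f^*T_Y=T_Y$ equals $d(f\circ\sigma)=d(\mathrm{id}_Y)=\mathrm{id}_{T_Y}$. This means that $d\sigma$ splits the pulled-back sequence: the map $\sigma^*(df):\sigma^*T_X\to T_Y$ admits the section $d\sigma$, so we get a direct sum decomposition
$$\sigma^*T_X\simeq \sigma^*T_f\oplus T_Y,$$
where $T_Y$ is identified with the image of $d\sigma$, i.e. with $T_{\sigma(Y)}$ under the isomorphism $\sigma:Y\xrightarrow{\sim}\sigma(Y)$.

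Then I would invoke the defining short exact sequence of the normal bundle of the smooth subvariety $\sigma(Y)\subset X$,
$$0\longrightarrow T_{\sigma(Y)}\longrightarrow T_X|_{\sigma(Y)}\longrightarrow \mathcal N_{\sigma(Y)/X}\longrightarrow 0.$$
Comparing with the split decomposition above and identifying $T_X|_{\sigma(Y)}$ with $\sigma^*T_X$ (via the isomorphism $\sigma:Y\xrightarrow{\sim}\sigma(Y)$) and $T_{\sigma(Y)}$ with $T_Y$, the complement of $T_{\sigma(Y)}$ in $T_X|_{\sigma(Y)}$ is exactly $\sigma^*T_f=T_f|_{\sigma(Y)}$. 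Hence $\mathcal N_{\sigma(Y)/X}\simeq T_f|_{\sigma(Y)}$, which is the desired statement.

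The main obstacle, and the point requiring the most care, is the bookkeeping of identifications: one must be precise that restriction of a bundle on $X$ to $\sigma(Y)$ corresponds to pullback by $\sigma$ under the isomorphism $Y\xrightarrow{\sim}\sigma(Y)$, and that the differential $d\sigma$ genuinely lands as a splitting of the restricted relative tangent sequence. Everything reduces to the identity $f\circ\sigma=\mathrm{id}_Y$ and functoriality of the differential, so once these identifications are made carefully the isomorphism is canonical; no delicate computation is involved beyond verifying the splitting.
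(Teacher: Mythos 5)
Your proof is correct and follows essentially the same route as the paper: both compare the normal bundle sequence $0\to T_{\sigma(Y)}\to T_X|_{\sigma(Y)}\to \mathcal N_{\sigma(Y)/X}\to 0$ with the restriction of $0\to T_f\to T_X\to f^*T_Y\to 0$ to $\sigma(Y)$. Your explicit use of $d\sigma$ as a splitting (via $f\circ\sigma=\mathrm{id}_Y$) in fact makes precise the step the paper leaves implicit when it deduces the isomorphism from the two sequences.
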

\begin{proof}

Consider the normal bundle short exact sequence 
\begin{equation}\label{tangentsequnce}
 0\longrightarrow T_{\sigma(Y)} \longrightarrow T_{X}|_{\sigma(Y)}\longrightarrow \mathcal N_{\sigma(Y)/X}\longrightarrow 0 ,
\end{equation}

where $T_{\sigma(Y)}$ and $T_{X}$ are the tangent bundles of
$\sigma(Y)$ and
$X$ respectively.
Also consider the following short exact sequence 
\begin{equation}\label{relative1}
 0\longrightarrow T_{f}\longrightarrow T_{X} \longrightarrow f^* T_{Y}\longrightarrow 0 ~.
\end{equation}

By restricting (\ref{relative1}) to $\sigma(Y)$, since $\sigma$ is a section of $f$, we get the following short exact sequence
\begin{equation}\label{relative101}
 0\longrightarrow T_{f}|_{\sigma(Y)}\longrightarrow T_{X}|_{\sigma(Y)} \longrightarrow  T_{\sigma(Y)}\longrightarrow 0 ~.
\end{equation}

By using (\ref{tangentsequnce}) and (\ref{relative101}), we see 
$T_{f}|_{\sigma(Y)}$ is isomorphic to $\mathcal N_{\sigma(Y)/X}$.
This completes the proof.
\end{proof}

We prove, 

\begin{lemma}\label{f_r1} Let $1\leq i \leq r$.
 The normal bundle $\mathcal N_{X_{i}/X_{i-1}}$ of 
 $X_{i-1}$ in $X_{i}$ is $\check{\mathscr L_{i-1}}$, 
 where $\mathscr L_{i-1}$ is as in the definition of Bott tower and
 $\check{ \mathscr L_{i-1}}$ is denotes the dual of $\mathscr L_{i-1}$.
\end{lemma}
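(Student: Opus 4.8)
The plan is to combine Lemma \ref{normal1} with the description of the relative tangent bundle $T_{\pi_i}$ obtained above, and then to carry out the restriction to a section in purely toric terms. Concretely, $X_{i-1}$ sits inside $X_i$ as the image of one of the two pole sections of the $\mathbb P^1$-bundle $\pi_i\colon X_i\to X_{i-1}$, these images being the torus-invariant divisors $D_{\rho_i^+}$ and $D_{\rho_i^-}$; I would take the section $\sigma$ whose image is $D_{\rho_i^+}$. Applying Lemma \ref{normal1} to $f=\pi_i$ and the section $\sigma$ immediately gives
$$\mathcal N_{\sigma(X_{i-1})/X_i}\;\cong\;T_{\pi_i}\big|_{\sigma(X_{i-1})},$$
so the whole problem reduces to restricting the line bundle $T_{\pi_i}$ to this section.

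For the restriction I would use that the two pole sections are disjoint: in the fan $\Sigma_i$ the rays $\rho_i^+$ and $\rho_i^-$ are generated by $e_i^+$ and $e_i^-=-e_i^+$, which are opposite and hence span no common cone, so $D_{\rho_i^+}\cap D_{\rho_i^-}=\varnothing$. Taking $\sigma(X_{i-1})=D_{\rho_i^+}$ and using the second expression for the relative tangent bundle, $T_{\pi_i}\cong\mathcal O_{X_i}\big(\sum_{j=1}^{i-1}\beta_{ij}D_{\rho_j^-}+2D_{\rho_i^-}\big)$, the disjointness kills the $2D_{\rho_i^-}$ term upon restriction, leaving
$$\mathcal N_{D_{\rho_i^+}/X_i}\;\cong\;\mathcal O_{X_i}\Big(\sum_{j=1}^{i-1}\beta_{ij}D_{\rho_j^-}\Big)\Big|_{D_{\rho_i^+}}.$$
This is exactly the mechanism that makes the computation tractable, since it removes the circular self-intersection contribution and leaves only horizontal divisors.

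Finally I would identify the right-hand side with $\check{\mathcal L}_{i-1}$. Since $D_{\rho_i^+}$ is the image of a section, $\pi_i|_{D_{\rho_i^+}}\colon D_{\rho_i^+}\to X_{i-1}$ is an isomorphism, and because each horizontal divisor is a pullback, namely $\pi_i^*D'_{\rho_j^-}=D_{\rho_j^-}$ for $j<i$ (as recorded in the proof of the relative tangent bundle lemma), one gets $D_{\rho_j^-}|_{D_{\rho_i^+}}\cong D'_{\rho_j^-}$ under this isomorphism. Hence the restricted bundle is $\mathcal O_{X_{i-1}}\big(\sum_{j<i}\beta_{ij}D'_{\rho_j^-}\big)$, and comparing its first Chern class with the formula $c_1(\mathcal L_{i-1})=-\sum_{k=1}^{i-1}\beta_{ki}u_k$ of Theorem \ref{006} (using that the class of the horizontal divisor $D'_{\rho_j^-}$ is the generator $u_j$) identifies it with $c_1(\check{\mathcal L}_{i-1})$, giving $\mathcal N_{X_{i-1}/X_i}\cong\check{\mathcal L}_{i-1}$. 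The main obstacle I anticipate is bookkeeping rather than conceptual: one must pin down precisely which pole section produces the dual $\check{\mathcal L}_{i-1}$ (the other section yields $\mathcal L_{i-1}$), and track the sign in passing from $\sum\beta_{ij}D'_{\rho_j^-}$ to $c_1$, so that the answer lands on the dual and not on $\mathcal L_{i-1}$ itself.
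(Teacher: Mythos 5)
Your proof is correct in outline but takes a genuinely different route from the paper. The paper never touches the toric divisor formula for $T_{\pi_i}$: it instead observes that the geometric line bundle $V(\mathscr L_{i-1})=Spec(S(\mathscr L_{i-1}))$ sits inside $\mathbb P(\mathcal O_{X_{i-1}}\oplus\mathscr L_{i-1})$ as an open subvariety containing the section $s_i^0(X_{i-1})$ as its zero section, invokes the standard fact that the restriction of $T_\pi$ to the zero section of $V(\mathscr L_{i-1})$ is $\check{\mathscr L}_{i-1}$ (this is where the dual enters, via the convention that $V(\mathscr L)$ has sheaf of sections $\check{\mathscr L}$), and then applies Lemma \ref{normal1} inside that open set. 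Your argument replaces this with a restriction computation in the toric picture: Lemma \ref{normal1} applied to the section $D_{\rho_i^+}$, disjointness of $D_{\rho_i^+}$ and $D_{\rho_i^-}$ to kill the $2D_{\rho_i^-}$ term, the pullback identity $\pi_i^*D'_{\rho_j^-}=D_{\rho_j^-}$ to descend the horizontal part, and a Chern-class comparison with Theorem \ref{006}. What the paper's route buys is that the dual appears for a structural reason with no sign-chasing; what yours buys is an explicit identification of which invariant divisor realizes the section and an answer expressed directly in the divisor basis, which is closer in spirit to the lemma that follows in the paper.

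Two small points to tighten. First, the disjointness of $D_{\rho_i^+}$ and $D_{\rho_i^-}$ should be justified by the fact that no cone of $\Sigma_i$ contains both rays (this is exactly the primitive collection $P_i$), rather than by writing $e_i^-=-e_i^+$; the latter holds in the lattice $\mathbb Z^i$ of $X_i$ but not in $\mathbb Z^r$, where $e_i^-=-e_i^+-\sum_{j>i}\beta_{ij}e_j^+$. Second, your last step rests on identifying the cohomology generator $u_j$ of Theorems \ref{007} and \ref{006} with the class of a specific invariant divisor ($D'_{\rho_j^-}$ versus $D'_{\rho_j^+}$ versus a negative thereof); the paper never fixes this convention, and getting it wrong swaps $\mathscr L_{i-1}$ with $\check{\mathscr L}_{i-1}$. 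You flag this yourself, and it is a determinate bookkeeping matter (one can pin it down by restricting to a fiber, or sidestep it entirely by computing $\mathcal O_{X_i}(D_{\rho_i^+})|_{D_{\rho_i^+}}$ via adjunction and the relation $D_{\rho_i^+}\sim D_{\rho_i^-}+\sum_{j<i}\beta_{ji}D_{\rho_j^-}$), but as written it is the one unverified link in your chain.
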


\begin{proof} Fix $1\leq i\leq r$ and let $\mathscr L:=\mathscr L_{i-1}$.
Recall that $\mathbb P(\mathscr E)$ is by definition $Proj(S(\mathscr E))$, $S(\mathscr E)$
is symmetric algebra of $\mathscr E=\mathcal O_{X_{i-1}}\oplus \mathscr L$ (see \cite[Page 162]{hartshorne}). 
Let $V(\mathscr L)=Spec(S(\mathscr L))$,
the geometric vector bundle associated to the locally free sheaf
(line bundle) $\mathscr L$
(see \cite[Exercise 5.18, Page 128]{hartshorne}).
Then, $V(\mathscr L)$ is an open subvariety in $\mathbb P (\mathscr E)$ and we have the following commutative diagram

\[
\begin{tikzcd}
 V(\mathscr L) \arrow[dr, "\pi"] \arrow[hook]{rr} && \mathbb P(\mathscr E)=X_{i} \arrow[dl, swap,  "\pi_r"]\\
 &   X_{i-1} \arrow[ur,bend right=45, red, swap, "s^0_i"]\arrow[lu,bend left=50, red, "\sigma_{\pi}"]
\end{tikzcd}
\]
Also note that the section $s^0_i(X_{i-1})$ of $\pi_i$ corresponding to the projection 
$\mathscr E \to \mathcal O_{X_{i}}$ is same as the 
zero section $\sigma_\pi(X_{i-1})$ of $\pi$. 
Now consider the following short exact sequence 
\begin{equation}\label{pi}
0\longrightarrow T_{\pi}\longrightarrow T_{V(\mathscr L)} \longrightarrow \pi^*T_{X_{i-1}}\longrightarrow 0.
\end{equation}

Since the restriction $T_{\pi|_{\sigma_\pi}(X_{i-1})}$ of $T_{\pi}$ to $\sigma_{\pi}(X_{r-1})$ is $\check {\mathscr L}$, 
by Lemma \ref{normal1} and by above short exact sequence (\ref{pi}) 
we see that $\mathcal N_{\sigma_{\pi}(X_{i-1})/V(\mathscr L)} \simeq \check {\mathscr L}.$
Hence we conclude that
$\mathcal N_{X_{i-1}/X_{i}}\simeq \check{\mathscr L}$
(here we are identifying $X_{i-1}$ with the section 
corresponding to the projection 
$\mathscr E=\mathcal O_{X_{i-1}}\oplus \mathscr L \to \mathcal O_{X_{i-1}}).$
This completes the proof of the lemma.
\end{proof}

 Let $1\leq i \leq r$. We prove,
 \begin{lemma}\

\begin{enumerate}
\item The toric sections of $\pi_{i}$ are given by $D_{\rho_i^{\epsilon}}, {\epsilon}\in \{+,-\}.$
 \item The normal bundle $\mathcal N_{X_{i-1}/X_{i}}$ of
 $X_{i-1}$ in $X_{i}$ is given by
  $$\mathcal N_{X_{i-1}/X_{i}}=\check {\mathscr L_{i-1}}=\mathcal O_{X_{i}}(D_{\rho_i^+}),$$
where the line bundle $\mathscr L_{i-1}$ is as in the definition of the Bott tower $X_{i}$.
\end{enumerate}
\end{lemma}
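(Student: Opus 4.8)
The plan is to treat the two statements separately: to read off the toric sections from the fan of Theorem~\ref{fan}, and then to single out the zero section and compute its normal bundle, reconciling with Lemma~\ref{f_r1}. For (1) I would use that $\pi_i$ is induced by the coordinate projection $\overline{\pi_i}\colon\mathbb Z^i\to\mathbb Z^{i-1}$. In the fan $\Sigma_i$ of $X_i$ the top index has no higher $\beta$-terms, so (\ref{e-}) degenerates to $e_i^-=-e_i^+$; thus $\rho_i^+,\rho_i^-$ are precisely the rays in $\ker\overline{\pi_i}$, while each $\rho_j^\epsilon$ with $j<i$ maps onto the primitive generator of the corresponding ray of $\Sigma_{i-1}$. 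A torus-invariant section is an invariant prime divisor mapping isomorphically to the base, which forces its ray into $\ker\overline{\pi_i}$ and already limits the candidates to $\rho_i^+,\rho_i^-$. I would then check each is a section by describing the fan $\mathrm{Star}(\rho_i^\epsilon)$ of $D_{\rho_i^\epsilon}=V(\rho_i^\epsilon)$: its maximal cones are the cones of $\Sigma_i$ containing $e_i^\epsilon$, and in the quotient lattice $\mathbb Z^i/\mathbb Z e_i^\epsilon\cong\mathbb Z^{i-1}$ they map bijectively onto the maximal cones of $\Sigma_{i-1}$ (dropping the $i$-th coordinate kills exactly the $j=i$ summand of each $e_k^-$). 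Hence $\pi_i|_{D_{\rho_i^\epsilon}}$ is an isomorphism onto $X_{i-1}$, proving~(1).

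For (2), the equality $\mathcal N_{X_{i-1}/X_i}=\check{\mathscr L_{i-1}}$ is Lemma~\ref{f_r1}, so it remains to identify the south-pole (zero) section $s_i^0$ among $D_{\rho_i^+},D_{\rho_i^-}$ and to invoke the adjunction identity $\mathcal N_{D/X}=\mathcal O_X(D)|_D$ for a smooth divisor. Writing $s:=(\pi_i|_{D_{\rho_i^+}})^{-1}$ for the associated section, I would compute $\mathcal O_{X_i}(D_{\rho_i^+})|_{D_{\rho_i^+}}=s^*\mathcal O_{X_i}(D_{\rho_i^+})$ by pulling the linear relation (\ref{3.8}) back along $s$. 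Since $\rho_i^+$ and $\rho_i^-$ are antipodal they lie in no common cone of $\Sigma_i$, so $D_{\rho_i^+}\cap D_{\rho_i^-}=\emptyset$ and $s^*D_{\rho_i^-}=0$; and for $j<i$ the divisor $D_{\rho_j^-}$ is the pullback along $\pi_i$ of the like-named divisor on $X_{i-1}$, so $s^*D_{\rho_j^-}=D_{\rho_j^-}$. Restricting (\ref{3.8}) therefore yields $\mathcal O_{X_i}(D_{\rho_i^+})|_{D_{\rho_i^+}}\cong\mathcal O_{X_{i-1}}(\sum_{j<i}\beta_{ji}D_{\rho_j^-})$, whose coefficients are exactly those appearing in $c_1(\mathscr L_{i-1})$ in Theorem~\ref{006}.

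Finally I would match this class with $c_1(\check{\mathscr L_{i-1}})=-c_1(\mathscr L_{i-1})=\sum_{k<i}\beta_{ki}u_k$, i.e. confirm $u_k=[D_{\rho_k^-}]$. This last identification I would verify from Theorem~\ref{007}: in $H^*(X_k,\mathbb Z)$ the Stanley--Reisner relation $[D_{\rho_k^+}]\cdot[D_{\rho_k^-}]=0$ together with (\ref{3.8}) gives, in one expansion, $[D_{\rho_k^-}]^2=-\sum_{j<k}\beta_{jk}[D_{\rho_j^-}]\cdot[D_{\rho_k^-}]$, which is precisely the defining relation $u_k^2=c_1(\mathscr L_{k-1})u_k$; hence $u_k=[D_{\rho_k^-}]$. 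The restriction computed above then equals $c_1(\check{\mathscr L_{i-1}})$, so $D_{\rho_i^+}$ is the zero section $s_i^0$ and $\mathcal O_{X_i}(D_{\rho_i^+})|_{D_{\rho_i^+}}=\check{\mathscr L_{i-1}}$, as asserted. I expect the pole-identification to be the only real obstacle: one must fix conventions consistently (Grothendieck's $\mathbb P(-)=\mathrm{Proj}\,S(-)$, under which the quotient $\mathcal O_{X_{i-1}}\oplus\mathscr L_{i-1}\to\mathcal O_{X_{i-1}}$ has normal bundle $\check{\mathscr L_{i-1}}$) and match $u_k$ with $[D_{\rho_k^-}]$ rather than $[D_{\rho_k^+}]$, as the opposite choice would interchange the two poles and return $\mathscr L_{i-1}$ instead of its dual.
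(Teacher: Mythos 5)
Your argument is correct, and for part (2) it is genuinely more substantial than what the paper records. For (1) you and the paper take the same route — the invariant sections come from the rays of the fiber fan under the projection $\overline{\pi_i}\colon\mathbb Z^i\to\mathbb Z^{i-1}$ — though you additionally prove that $\rho_i^+,\rho_i^-$ are the \emph{only} candidates (the ray of an invariant section must lie in $\ker\overline{\pi_i}$) and that $\pi_i$ restricts to an isomorphism on each $D_{\rho_i^\epsilon}$ by comparing $\mathrm{Star}(\rho_i^\epsilon)$ with $\Sigma_{i-1}$; the paper simply asserts that the $V(\sigma)$ for $\sigma$ a maximal cone of the fiber fan are sections.

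For (2) the paper's proof is one line: invoke Lemma \ref{f_r1} and say the statement ``follows from (1)''; in particular the identification of the zero section with $D_{\rho_i^+}$ (as opposed to $D_{\rho_i^-}$) and the equality $\check{\mathscr L_{i-1}}=\mathcal O_{X_i}(D_{\rho_i^+})$ are left implicit. You instead verify this by adjunction, $\mathcal N_{D/X}=\mathcal O_X(D)|_D$, restricting the linear relation (\ref{3.8}) to $D_{\rho_i^+}$ (using that $D_{\rho_i^+}\cap D_{\rho_i^-}=\emptyset$ since the two rays span no cone, and that $D_{\rho_j^-}$ for $j<i$ is $\pi_i^*$ of the like-named divisor downstairs), and then matching the result $\sum_{j<i}\beta_{ji}D_{\rho_j^-}$ against $c_1(\check{\mathscr L_{i-1}})$ via the identification $u_k=[D_{\rho_k^-}]$, which you check against the Stanley--Reisner relation of Theorem \ref{007}. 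This is exactly the convention-matching step the paper glosses over, and your computation is the honest content of the claim $\check{\mathscr L_{i-1}}=\mathcal O_{X_i}(D_{\rho_i^+})$. The only soft spot is that consistency with the relation $u_k^2=c_1(\mathcal L_{k-1})u_k$ does not by itself uniquely force $u_k=[D_{\rho_k^-}]$; you acknowledge this yourself as a convention issue (Grothendieck's $\mathbb P(-)$ and the choice of pole), and the paper offers nothing more rigorous on this point, so this is not a gap relative to the paper's own standard.
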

\begin{proof} 
  Proof of (1): Recall that $\pi_i$ is a $\mathbb P^1$-fibration induced by the projection $\overline \pi_i:\mathbb Z^i \to \mathbb Z^{i-1}$.
 For each cone $\sigma\in \Sigma_F$ of dimension 1 (which is a maximal cone in $\Sigma_F$, where $\Sigma_F$ denote the fan of the fiber $\mathbb P^1$), the subvariety
$V (\sigma)$ is an invariant section of $\pi_i$, which is an invariant divisor in $X_{i}.$
Hence we get two invariant divisors $V(\rho_i^+)=D_{\rho_i^+}$
and
$V(\rho_i^-)=D_{\rho_i^-}$.

Proof of (2): By Lemma \ref{f_r1}, we have 
$\mathcal N_{X_{i-1}/X_{i}}=\check{\mathscr L_{i-1}}$ and the section $X_{i-1}$ is given by the projection $\mathscr E=\mathcal O_{X_{i-1}}\oplus \mathscr L_{i-1} \to \mathcal O_{X_{i-1}}$.
Hence (2) follows from (1).
  \end{proof}

\section{Primitive relations of the Bott tower}\label{Mori0}

\subsection{Primitive collections and primitive relations}
First recall the notion of   primitive collections and primitive relations of a fan $\Sigma$, 
which are basic tools for the
 classification of Fano toric varieties due to Batyrev (see \cite{batyrev1991classification}).
 \begin{definition}\label{def} We say
   $P \subset \Sigma (1)$ is a {\it \bf primitive collection} if $P$ is not contained in 
 $ \sigma(1)$ for some $ \sigma  \in  \Sigma$
but any proper subset is. Note that if $ \Sigma$ is simplicial, 
primitive collection means that $P$ does not generate a cone in $ \Sigma$ but every proper subset does.
 \end{definition}
 
 \begin{definition}\label{def2}
  Let $P= \{ \rho_1,  \ldots,  \rho_k \}$ be a primitive collection in a complete simplicial fan $ \Sigma$.
  Recall $u_{\rho}$ is
  the primitive vector of the ray $\rho\in \Sigma$.
  Then $\sum_{i=1}^{k}u_{ \rho_i}$ is in the relative interior of a cone $\gamma_{P}$ in $\Sigma$ with a unique expression 
    \begin{eqnarray}\label{4.1} \sum_{i=1}^{k}u_{ \rho_i}= \sum_{  \rho  \in \gamma_{P}(1)}c_{ \rho}u_{ \rho}, ~~ c_{ \rho} \in  \mathbb Q_{>0}.
~~\mbox{Hence we have }~~
    \sum_{i=1}^{k}u_{ \rho_i}-(\sum_{  \rho  \in \gamma_{P}(1)}c_{ \rho}u_{ \rho})=0 .
   \end{eqnarray}
   
Then we call (\ref{4.1}) the {\it \bf primitive relation} of $X_{\Sigma}$ corresponding to $P.$ \end{definition}

Recall that $TDiv(X_{\Sigma})$ denote the group of torus-invariant divisors in $X_{\Sigma}$
(see Page 4).
Since the fan $\Sigma$ of $X_{r}$ is full dimensional,
 we have the following short exact sequence 
\begin{equation}\label{exactpic}
 0\longrightarrow M \overset{\varphi_1}{\longrightarrow}
TDiv(X_{r})=\bigoplus_{\rho\in\Sigma(1)}\mathbb ZD_{\rho} \overset{\varphi_2}{\longrightarrow} Pic(X_{r})\to 0,
\end{equation}
where the maps are given by 
$\varphi_1:m\mapsto div(\chi^m) ~~\mbox{and}~~ \varphi_2: D\mapsto \mathcal O_{X_{r}}(D)$ (see  \cite[Theorem 4.2.1]{cox2011toric}).

Now we recall some standard notations:
Let $X$ be a smooth projective variety, we define 
$$N_1(X)_{\mathbb Z}:=\{\sum_{\mbox{finite}}a_iC_i : a_{i}\in \mathbb Z , C_i ~\mbox{irreducible curve in } ~X\}/\equiv $$
where $\equiv$ is the numerical equivalence, i.e. $Z\equiv Z'$
if and only if $D\cdot Z=D\cdot Z'$ for all divisors $D$ in $X$.
We denote by $[C]$ the class of $C$ in $N_1(X)_{\mathbb Z}$.
Let $N_1(X):=N_1(X)_{\mathbb Z}\otimes \mathbb R$. 
It is a well known fact that $N_1(X)$ is a finite dimensional real vector space
(see \cite[Proposition 4, \S 1, Chapter IV]{kleiman1966toward}).
In the case where $X$ is a (smooth projective) toric variety, $N_1(X)_{\mathbb Z}$ is dual to $Pic(X)$ via the natural pairing (see \cite[Proposition 6.3.15]{cox2011toric}). 
In our case $X=X_{r}$, there are dual exact sequences:
$$0\longrightarrow M \overset{\varphi_{1}}{\longrightarrow} \mathbb Z^{\Sigma(1)} \overset{\varphi_{2}}{\longrightarrow} Pic(X_{r}) \longrightarrow 0 $$ and  
\begin{equation}\label{123}
0\longrightarrow N_{1}(X_{r})_{\mathbb Z}\overset{\varphi^*_{2}}{\longrightarrow} \mathbb Z^{\Sigma(1)}\overset{\varphi^*_{1}}{\longrightarrow} N \longrightarrow 0,
\end{equation}
where
$$ \varphi_2^*([C])= (D_{\rho}\cdot C)_{\rho\in \Sigma(1)}, \hspace{1cm} C~\mbox{is an irreducible complete curve in }~  X_{r}
$$ and 
$$ \varphi_1^*(e_{\rho})=u_{\rho}, \hspace{1cm} e_{\rho} ~\mbox{is a standard basis vector of }~ \mathbb R^{\Sigma(1)}$$
(see \cite[Proposition 6.4.1]{cox2011toric}).
Let $P$ be a primitive collection in $\Sigma$. Note that since $X_{r}$ is smooth projective, 
$P\cap \gamma_{P}(1)=\emptyset$ and 
 \begin{equation}\label{crho}
  c_{\rho}\in \mathbb Z_{>0} ~\mbox{ for all}~~ \rho\in \gamma_P(1)
 \end{equation}
 (see \cite[Proposition 7.3.6]{cox2011toric}).
As an element in $ \mathbb Z^{\sum(1)}$, we write $r(P)=(r_{ \rho})_{ \rho \in \Sigma(1)}$, where 
\begin{equation}\label{asa}
 r_{ \rho} =
\begin{cases}
1 & \text{if } \rho \in P  \\   
-c_{\rho} & \text{if } \rho \in \gamma_{P}(1)     \\ 
0& \text{otherwise }
\end{cases}
\end{equation}

Then by (\ref{4.1}) we see that
  $$\sum_{\rho\in \sum(1)}r_{\rho}u_{\rho}=0.$$ Hence by the exact sequence
  (\ref{123}) and by (\ref{crho}), we observe that $r(P)$ gives an element in $N_1(X_{r})_{\mathbb Z}$ (see \cite[Page 305]{cox2011toric}).
 We prove,

\begin{lemma}\label{primitive}
 Let $P_i:=\{\rho_i^+,\rho_i^-\}$, $1\leq i \leq r$. Then 
  $\{P_i : 1\leq i \leq r \}$ is the set of all 
 primitive collections of the fan $\Sigma$ of $X_{r}$.
\end{lemma}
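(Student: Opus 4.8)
The plan is to reduce everything to a purely combinatorial description of which subsets of $\Sigma(1)$ generate a cone of $\Sigma$, and then read off the primitive collections directly from Definition \ref{def}.

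First I would record the key fact coming from Theorem \ref{fan}: the maximal cones of $\Sigma$ are precisely the $2^r$ cones $\sigma_{\epsilon} = \mathrm{Cone}(e_1^{\epsilon_1}, \ldots, e_r^{\epsilon_r})$ indexed by sign vectors $\epsilon = (\epsilon_1, \ldots, \epsilon_r) \in \{+,-\}^r$. Since $\Sigma$ is smooth (hence simplicial) and complete, a subset $S \subseteq \Sigma(1)$ equals $\sigma(1)$ for some $\sigma \in \Sigma$ — equivalently, $S$ generates a cone of $\Sigma$ — if and only if $S$ is contained in the ray set of some maximal cone, i.e. $S \subseteq \{\rho_1^{\epsilon_1}, \ldots, \rho_r^{\epsilon_r}\}$ for some $\epsilon$. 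Because each such ray set contains exactly one of $\rho_i^+, \rho_i^-$ for every index $i$, this yields the clean criterion: $S$ generates a cone of $\Sigma$ if and only if $S$ contains at most one of $\{\rho_i^+, \rho_i^-\}$ for each $1 \le i \le r$; equivalently, $S$ fails to generate a cone of $\Sigma$ if and only if $P_i \subseteq S$ for some $i$.

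With this criterion in hand, I would verify that each $P_i = \{\rho_i^+, \rho_i^-\}$ is a primitive collection. Indeed $P_i$ contains both $\rho_i^+$ and $\rho_i^-$, so by the criterion it does not generate a cone; on the other hand its only proper subsets are $\emptyset$, $\{\rho_i^+\}$ and $\{\rho_i^-\}$, each of which (having at most one ray per index) does generate a cone of $\Sigma$. Thus $P_i$ satisfies Definition \ref{def}.

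Finally I would show there are no others. Let $P$ be any primitive collection. Since $P$ does not generate a cone, the criterion forces $P_i \subseteq P$ for some $i$. If $P \ne P_i$, then $P_i$ is a proper subset of $P$; but $P_i$ does not generate a cone, contradicting the requirement in Definition \ref{def} that every proper subset of a primitive collection generate a cone. Hence $P = P_i$, so $\{P_i : 1 \le i \le r\}$ exhausts the primitive collections. The only real content lies in the combinatorial characterization of cone-generating subsets in the first step, which I expect to be the main point to get right; the two verification steps afterwards are immediate once that characterization is established.
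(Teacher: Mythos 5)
Your proposal is correct and follows essentially the same route as the paper: both arguments rest on the observation from Theorem \ref{fan} that a subset of $\Sigma(1)$ generates a cone of $\Sigma$ exactly when it contains no pair $\{\rho_i^+,\rho_i^-\}$, then check that each $P_i$ is primitive and that any primitive collection properly containing some $P_i$ would violate the requirement that proper subsets generate cones. Your write-up is if anything slightly more explicit about why the face criterion holds for this simplicial fan.
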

\begin{proof}
  
  By Theorem \ref{fan}, the cones in the fan $\Sigma$ of $X_{r}$ 
are generated
by subsets of 
$\{e_{1}^{+}, \ldots, e_{r}^{+}, e_{1}^-,\ldots, e_{r}^-\}$
and
containing no subset of the form 
$\{e_{i}^+, e_{i}^-\}$. 
Then by Definition \ref{def}, it is clear that $P_i=\{\rho_i^+, \rho_i^-\}$ is a primitive collection for all $i$.
Also note that again by description of the cones in $\Sigma$, any primitive collection must contain a $P_i$ for some $1\leq i \leq r$. 
  
  Fix $1\leq i \leq r$. 
  Let $Q$ be a collection of one-dimensional cones such that it properly contains $P_i$, i.e. 
  there exists $1\leq j \leq r$ and $j\neq i$ such that $\rho_j^{\epsilon}\in Q\supset P_i$, $\epsilon\in \{+, -\}$.
    Assume that $Q$ is a primitive collection.
  Then by Definition \ref{def}, 
  $\{\rho_i^+, \rho_i^-\}\subset Q$ generates a cone in $\Sigma$. 
This is a contradiction to the description of the cones in $\Sigma$.
Therefore, we conclude that  $\{P_i : 1\leq i \leq r \}$ is the set of all 
 primitive collections.
 \end{proof}
Now we define the {\bf Contractible classes} from \cite{Casagrande_contractibleclasses}:
Let $X$ be a smooth projective toric variety.
We define $NE(X)_{\mathbb Z}$ in $N_1(X)$ by 
$$NE(X)_{\mathbb Z}:=\{\sum_{finite}a_iC_i : a_i\in \mathbb Z_{\geq 0} ~\mbox{and}~ C_i ~\mbox{irreducible curve in $X$}~ \}.$$
  Let $\gamma\in NE(X)_{\mathbb Z}$ be primitive 
  (i.e. the generator of $\mathbb Z_{\geq 0}\gamma $)  and such that there exists 
  some irreducible curve in $X$ having numerical class in $\mathbb Q_{\geq 0}\gamma$.
Then \begin{definition}(see \cite[Definition 2.3]{Casagrande_contractibleclasses}) The above class $\gamma$ is called
  {\it \bf contractible} if 
  there exists a toric variety $X_{\gamma}$ and an equivariant morphism $\phi_{\gamma}:X\to X_{\gamma}$, 
  surjective with connected fibers, such that 
  for every irreducible curve $C$ in $X$, 
    $$\phi_{\gamma}(C)=\{pt\}~~\mbox{if and only if }~~[C]\in \mathbb Q_{\geq 0}\gamma.$$
 
 \end{definition}
 \begin{remark}
  
Note that any contractible class is always a class of some invariant curve 
and also a primitive relation (see \cite[Theorem 2.2]{Casagrande_contractibleclasses} and \cite[Page 74]{scaramuzza2009algorithms}).

 \end{remark}

Recall the following result from 
\cite[Proposition 3.4]{Casagrande_contractibleclasses}.
\begin{proposition}\label{Cas}
Let $P=\{\rho_1,\ldots, \rho_k\}$ be a primitive collection in $\Sigma$, with the primitive relation $r(P)$:
$$\sum_{i=1}^{k}u_{\rho_i}-\sum_{\rho\in \gamma_P(1)}c_{\rho}u_{\rho}=0.$$ Then 
$r(P)$ is contractible if and only if for every primitive collection $Q$ of $\Sigma$ such that $P\cap Q\neq \emptyset$
and $P\neq Q$, the set $(Q\setminus P)\cup 
\gamma_P(1)$
contains a primitive collection. 
\end{proposition}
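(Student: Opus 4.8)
The plan is to translate \emph{contractibility} of $r(P)$ into the existence of a well-defined \emph{contracted fan} $\Sigma_\gamma$ and then to recognize the stated combinatorial hypothesis as exactly the condition for that fan to exist. The tool I would use throughout is the standard dictionary for complete simplicial (here smooth) fans: a subset $S\subseteq\Sigma(1)$ spans a cone of $\Sigma$ if and only if $S$ contains no primitive collection, since the minimal non-faces of the abstract simplicial complex underlying $\Sigma$ are precisely its primitive collections (Definition \ref{def}). Under this dictionary the hypothesis ``$(Q\setminus P)\cup\gamma_P(1)$ contains a primitive collection'' is \emph{equivalent} to ``$(Q\setminus P)\cup\gamma_P(1)$ spans no cone of $\Sigma$,'' and it is this reformulation that I would feed into the geometry.

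Next I would recall the combinatorial shape of a toric ray contraction. By the Remark following the statement (i.e.\ \cite[Theorem 2.2]{Casagrande_contractibleclasses}), a contractible class is both the class of an invariant curve and a primitive relation, so it is legitimate to test the given $\gamma=r(P)$. If $\phi_\gamma\colon X_r\to X_{\Sigma_\gamma}$ exists, it is the toric morphism obtained by deleting from $\Sigma$ exactly the codimension-one cones (walls) $\omega$ whose wall relation is a positive multiple of $r(P)$ and \emph{merging} the maximal cones adjacent across them; the invariant curves contracted are then precisely the $V(\omega)$ for these walls, so $\phi_\gamma$ collapses exactly $\mathbb{Q}_{\geq 0}\gamma$ precisely when the merged collection $\Sigma_\gamma$ is again a fan. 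This reduces the whole assertion to the equivalence: $\Sigma_\gamma$ is a fan $\iff$ the reformulated combinatorial condition holds.

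For the forward implication I would argue contrapositively. If some primitive collection $Q$ with $Q\cap P\neq\emptyset$ and $Q\neq P$ has $(Q\setminus P)\cup\gamma_P(1)$ spanning a cone $\tau\in\Sigma$, then picking $\rho\in Q\cap P$ and playing the relation $r(P)$ against $r(Q)$ across $\tau$ produces either a wall whose class is not in $\mathbb{Q}_{\geq 0}\gamma$ yet is forced into the merge, or a failure of strong convexity for the merged cone through $\tau$; either way $\Sigma_\gamma$ is not a fan and $r(P)$ is not contractible. For the converse I would build $\Sigma_\gamma$ explicitly: its maximal cones are the convex hulls of the maximal cones of $\Sigma$ glued across the contracted walls, and the condition in the form ``$(Q\setminus P)\cup\gamma_P(1)$ spans no cone'' is exactly what removes the ``bad'' cones that would otherwise destroy strong convexity or make two merged cones overlap improperly. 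Granting this, one verifies that $\Sigma_\gamma$ is closed under faces and intersections and that the induced proper equivariant morphism $\phi_\gamma$ has connected fibres and contracts an invariant curve exactly when its class lies in $\mathbb{Q}_{\geq 0}\gamma$.

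The step I expect to be the main obstacle is the converse, namely the verification that the merged collection $\Sigma_\gamma$ is a genuine fan: one must show that each merged cone is strongly convex and that two distinct merged cones meet along a common face, and must do this uniformly across the three flavours of contraction (fibre type when $\gamma_P(1)=\emptyset$, divisorial, and small), where both the target $X_{\Sigma_\gamma}$ and the lattice map defining $\phi_\gamma$ look different. Making precise how the failure of the condition for a single $Q$ breaks convexity, and conversely how its validity for all such $Q$ restores it, is where the genuine combinatorial work sits; I would expect to use the primitivity of both $P$ and $Q$ together with the smoothness of $X_r$ at each step.
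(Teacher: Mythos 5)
The first thing to say is that the paper does not prove this proposition at all: it is recalled verbatim from Casagrande, \emph{Contractible classes in toric varieties}, Proposition 3.4 (the text reads ``Recall the following result from \cite[Proposition 3.4]{Casagrande_contractibleclasses}''). So there is no in-paper argument to compare yours against, and your proposal has to stand on its own as a proof of Casagrande's result. Its opening move is sound: for a complete simplicial fan the primitive collections are exactly the minimal non-faces, so ``$(Q\setminus P)\cup\gamma_P(1)$ contains a primitive collection'' is indeed equivalent to ``$(Q\setminus P)\cup\gamma_P(1)$ spans no cone of $\Sigma$,'' and reducing contractibility to the existence of the contracted fan $\Sigma_\gamma$ is the right geometric picture. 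In outline this is consistent with how the result is actually established in the literature.

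The genuine gap is that the outline is all there is: neither implication is carried out. In the forward direction you assert a dichotomy (``either a wall whose class is not in $\mathbb{Q}_{\geq 0}\gamma$ yet is forced into the merge, or a failure of strong convexity'') without constructing, from a primitive $Q$ meeting $P$ with $(Q\setminus P)\cup\gamma_P(1)$ contained in a cone, the specific pair of maximal cones or the specific wall that witnesses either horn; this is where the primitivity of $Q$ and the relation $r(Q)$ must actually be used, and the sketch does not use them. In the converse direction the entire verification that $\Sigma_\gamma$ is a fan --- strong convexity of merged cones, proper intersection of distinct merged cones, and the statement that $\phi_\gamma$ collapses a curve exactly when its class lies in $\mathbb{Q}_{\geq 0}\gamma$ --- is deferred with ``Granting this, one verifies\ldots,'' and you yourself flag it as the main obstacle. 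Essentially all of the mathematical content of the proposition lives in precisely these two deferred steps, so what you have is a plausible plan rather than a proof; a complete argument would either execute that combinatorial work or route through Casagrande's Theorem 2.2 and the supporting lemmas of her Section 3, which is what the cited source does.
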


\subsection{Mori cone } We use the notation as above. 
Let $X$ be a smooth projective variety.
We define 
$NE(X)$ the real convex cone  in $N_1(X)$ generated by classes of irreducible curves.
The {\it \bf Mori cone } 
 $\overline{NE}(X)$ is the closure of $NE(X)$ in $N_1(X)$ and it is a strongly convex cone of maximal dimension.
 
If $X$ is a (smooth projective) toric variety, it is known that 
$NE(X)_{\mathbb Z}$ is generated by the finitely many torus-invariant irreducible curves in $X$ and  hence 
$NE(X)_{\mathbb Z}$ is a finitely generated monoid.
Hence the cone $NE(X)=\overline{NE}(X)$ is a rational polyhedral cone and
we have
$$\overline{NE}(X)=\sum_{\tau\in \Sigma(r-1)}\mathbb R_{\geq 0}[V(\tau)] ,$$
where $r=dim(X)$ and $[V(\tau)] \in N_1(X)_{\mathbb Z}$ is the class of the toric curve $V(\tau)$. This is called the Toric Cone Theorem (see \cite[Theorem 6.3.20]{cox2011toric}).
 Let $\tau\in \Sigma(r-1)$ be a wall, that is 
$\tau=\sigma\cap \sigma'$ for some $\sigma, \sigma'\in \Sigma(r)$. 
 Let $\sigma$ (respectively, $\sigma'$) is generated by $\{u_{\rho_1}, u_{\rho_2}, \ldots , u_{\rho_r}\}$
(respectively, by $\{u_{\rho_2}, \ldots, u_{\rho_{r+1}} \}$) and
let $\tau$ be generated by $\{u_{\rho_2}, \ldots , u_{\rho_r}\}.$
 Then we get a linear relation,
 \begin{equation}\label{wallrelation}
  u_{\rho_1}+\sum_{i=2}^{r}b_iu_{\rho_i}+u_{\rho_{r+1}}=0
 \end{equation}
 The relation (\ref{wallrelation}) called {\it \bf wall relation} and we have 
 
 $$D_{\rho}\cdot V(\tau)=\begin{cases}
                         b_i & ~\mbox{if}~ \rho=\rho_i ~\mbox{and}~ i\in \{2,3,\ldots, r\}\\
                         1 & ~\mbox{if}~ \rho=\rho_i ~\mbox{and}~ i\in \{1, r+1\}\\
                         0 & ~\mbox{otherwise}~
                        \end{cases}
$$
(see \cite[Proposition 6.4.4 and eq. (6.4.6) page 303]{cox2011toric}). 
 Now we describe the Mori cone $\overline{NE}(X_{r})$ of $X_{r}$ 
in terms of the primitive relations of $X_{r}$.
\begin{theorem}\label{Moricone} 
 $\overline{NE}(X_{r})_{\mathbb Z}=\sum_{i=1}^r\mathbb Z_{\geq 0}r(P_i)$.
\end{theorem}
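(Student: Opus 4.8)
The plan is to prove the two inclusions of the asserted monoid equality separately, exploiting the upper-triangular shape of $M_r$: by (\ref{e-}) both $e_j^+$ and $e_j^-$ have vanishing standard coordinates in all positions $<j$, and this ``support above the diagonal'' propagates through every relation we shall write down. By the Toric Cone Theorem, $\overline{NE}(X_r)_{\mathbb Z}$ is generated as a monoid by the invariant curve classes $[V(\tau)]$, $\tau\in\Sigma(r-1)$, so it suffices to show that (i) each $r(P_i)$ is effective, and (ii) each $[V(\tau)]$ is a $\mathbb Z_{\geq 0}$-combination of the $r(P_i)$.

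For the easy inclusion (i), by Lemma \ref{primitive} we have $P_i=\{\rho_i^+,\rho_i^-\}$ and, using (\ref{e-}), $u_{\rho_i^+}+u_{\rho_i^-}=e_i^++e_i^-=\sum_{j>i}(-\beta_{ij})e_j^+$, a vector supported in coordinates $>i$. The cone $\gamma_{P_i}$ having this vector in its relative interior is a cone of $\Sigma$, so $\gamma_{P_i}(1)$ meets each pair $\{\rho_j^+,\rho_j^-\}$ in at most one ray; writing $\gamma_{P_i}(1)=\{\rho_j^{\epsilon_j}:j\in S_i\}$ with $S_i\subseteq\{j>i\}$, I would choose the wall $\tau_i$ in the free direction $i$ carrying these signs on $S_i$ and arbitrary signs elsewhere. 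Since the expression of $e_i^++e_i^-$ in the linearly independent wall generators is unique, the wall relation (\ref{wallrelation}) of $\tau_i$ coincides exactly with the primitive relation, so that $r(P_i)=[V(\tau_i)]\in\overline{NE}(X_r)_{\mathbb Z}$.

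The structural heart of (ii) is a triangularity statement: for a wall $\tau$ with free direction $\ell$, the wall relation (\ref{wallrelation}) involves only directions $\geq\ell$. Indeed $e_\ell^++e_\ell^-$ is supported in coordinates $>\ell$, and comparing coordinates from position $1$ upward in its unique expansion over $\{e_j^{\epsilon_j}:j\neq\ell\}$ forces the coefficient of every $e_j^{\epsilon_j}$ with $j<\ell$ to vanish. Hence $[V(\tau)]$, read as the integer vector $(D_\rho\cdot V(\tau))_\rho$, has entry $1$ at $\rho_\ell^+$ and $\rho_\ell^-$ and is otherwise supported on exactly one ray $\rho_j^{\epsilon_j}$ per higher direction $j>\ell$; the same one-ray-per-direction support holds for each $r(P_i)$, whose entries are literally the numbers in (\ref{asa}).

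Since $r(P_i)$ has $\rho_i^-$-entry $1$ while $r(P_k)$ has $\rho_i^-$-entry $0$ for $k>i$, the $r(P_i)$ are linearly independent, hence a $\mathbb Q$-basis of $N_1(X_r)_{\mathbb Q}$; I would write $[V(\tau)]=\sum_i\lambda_i\,r(P_i)$ and read off, direction by direction, the pair of entries at $(\rho_i^+,\rho_i^-)$, namely $([V(\tau)]_{\rho_i^+},[V(\tau)]_{\rho_i^-})=\lambda_i(1,1)+\sum_{k<i}\lambda_k(-c^+_k,-c^-_k)$, where $c^{\pm}_k\geq 0$ are the (at most one nonzero) coefficients of $\rho_i^{\pm}$ in $\gamma_{P_k}$. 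Examining the smallest index in the support gives $\lambda_i=0$ for $i<\ell$ and $\lambda_\ell=1$, and for $i>\ell$ one proceeds by induction: the wall class vanishes at whichever of $\rho_i^{\pm}$ is not the chosen sign $\epsilon_i$, so the corresponding scalar equation reads $0=\lambda_i-\sum_{k<i}\lambda_k c^{\mp}_k$, whence $\lambda_i=\sum_{k<i}\lambda_k c^{\mp}_k\geq 0$. The triangular $1$'s force the $\lambda_i$ to be integers, so $[V(\tau)]\in\sum_i\mathbb Z_{\geq 0}\,r(P_i)$, finishing (ii) and, incidentally, yielding the basis of Corollary \ref{basis}. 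The main obstacle is exactly this last nonnegativity argument: it is the only place where the one-ray-per-direction property of $\gamma_{P_k}$ and the vanishing of the opposite-sign entry of a wall class must both be used, whereas linear independence and integrality are essentially automatic from the triangular structure.
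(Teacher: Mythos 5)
Your proof is correct, but it takes a genuinely different route from the paper. The paper's proof is a two-line application of general machinery: it quotes \cite[Theorem 6.4.11]{cox2011toric} for the real cone, then invokes Casagrande's theory of contractible classes --- Proposition \ref{Cas} shows each $r(P_i)$ is contractible (trivially here, since by Lemma \ref{primitive} the primitive collections $P_i$ are pairwise disjoint, so the hypothesis is vacuous), and \cite[Theorem 4.1]{Casagrande_contractibleclasses} then gives that the contractible classes generate $\overline{NE}(X_r)_{\mathbb Z}$ as a monoid. You instead argue directly: you reduce to invariant curve classes, realize each $r(P_i)$ as a wall class $[V(\tau_i)]$ (this is essentially the paper's later Proposition \ref{LI1}, proved independently of the theorem), and then run a triangular elimination --- using that every wall relation and every primitive relation is supported in directions $\geq$ its free direction, that $\gamma_{P_k}(1)$ meets each pair $\{\rho_j^+,\rho_j^-\}$ in at most one ray, and that a wall class vanishes on the ray of opposite sign --- to show the coefficients $\lambda_i$ in $[V(\tau)]=\sum_i\lambda_i r(P_i)$ are nonnegative integers. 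I checked the key steps (the vanishing $b_j=0$ for $j<\ell$ in the wall relation, the identity $\lambda_i=\sum_{k<i}\lambda_k c_k^{\mp}$ from the opposite-sign entry, and the induction) and they hold. What your approach buys is self-containedness and extra output: it yields Proposition \ref{LI1} and Corollary \ref{basis} along the way and gives an explicit algorithm for decomposing any invariant curve class; what it costs is generality, since the triangularity is special to the upper-triangular structure of $M_r$, whereas the paper's argument via contractible classes would adapt to any smooth projective toric variety with pairwise disjoint primitive collections.
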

\begin{proof} 
We have $$\overline{NE}(X_{r})=\sum_{P\in \mathscr P}\mathbb R_{\geq 0}r(P)~~ ,$$
where $\mathscr P$ is the set of all primitive collections in $X_{r}$
(see \cite[Theorem 6.4.11]{cox2011toric}).
By Lemma \ref{primitive}, $\{P_i : 1\leq i \leq r \}$ is the set of all 
 primitive collections of $X_{r}$.
 Therefore, we get $$\overline{NE}(X_{r})=\sum_{i=1}^{r}\mathbb R_{\geq 0}r(P_i)~~.$$

 By \cite[Theorem 4.1]{Casagrande_contractibleclasses}, we have 
 $$\overline{NE}(X_{r})_{\mathbb Z}=\sum_{\gamma\in \mathscr C}\mathbb Z_{\geq 0}\gamma,$$
 where $\mathscr C$ is the set of all contractible classes in $X_{r}.$
 
 By  Proposition \ref{Cas}, we can see that the primitive relations 
 $r(P_i)$ are contractible classes for $1\leq i\leq r$. 
Since any contractible class is a primitive relation,  we get $$\mathscr C =\{r(P_i):1\leq i \leq r\}.$$
Hence we conclude that $$\overline{NE}(X_{r})_{\mathbb Z}=\sum_{i=1}^{r}\mathbb Z_{\geq 0}r(P_i).$$
This completes the proof of the theorem.
\end{proof}

We have
\begin{corollary}\label{basis}
 The set $\{r(P_i): 1\leq i \leq r\}$ forms a basis of 
 $N_1(X_{r})_{\mathbb Z}$.
\end{corollary}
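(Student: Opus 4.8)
The plan is to exploit the perfect pairing between $N_1(X_r)_{\mathbb Z}$ and $Pic(X_r)$ together with the explicit basis $\{D_{\rho_i^+}:1\le i\le r\}$ of $Pic(X_r)$ supplied by Lemma \ref{divisors}. Both groups are free of rank $r$ and the pairing is unimodular, so it suffices to show that the integer matrix
$$A=(A_{ij})_{1\le i,j\le r},\qquad A_{ij}:=D_{\rho_i^+}\cdot r(P_j),$$
which is exactly the matrix expressing the vectors $r(P_j)$ in the dual basis of $\{D_{\rho_i^+}\}$, has determinant $\pm 1$; granting this, $\{r(P_j):1\le j\le r\}$ is automatically a $\mathbb Z$-basis of $N_1(X_r)_{\mathbb Z}$. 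By the description of $\varphi_2^*$ in (\ref{123}) and the formula (\ref{asa}), the entry $A_{ij}$ equals the $\rho_i^+$-coordinate $r_{\rho_i^+}$ of the primitive-relation vector $r(P_j)$.

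First I would pin down the diagonal and the entries above it. Since $\rho_j^+\in P_j=\{\rho_j^+,\rho_j^-\}$, formula (\ref{asa}) gives $A_{jj}=r_{\rho_j^+}=1$. For $i<j$ I must show $A_{ij}=r_{\rho_i^+}=0$, i.e.\ that $\rho_i^+$ lies neither in $P_j$ (immediate, as $i\neq j$) nor in $\gamma_{P_j}(1)$. This reduces the whole computation to the key claim that the cone $\gamma_{P_j}$ is spanned only by rays $\rho_k^{\pm}$ with $k>j$.

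To prove the key claim I would use the projection. By (\ref{e-}) the vector generating the relation is
$$u_{\rho_j^+}+u_{\rho_j^-}=e_j^+ + e_j^-=-\sum_{k>j}\beta_{jk}\,e_k^+,$$
which lies in the subspace $V_j:=\mathrm{span}_{\mathbb R}(e_{j+1}^+,\dots,e_r^+)$. Let $p\colon\mathbb Z^r\to\mathbb Z^j$ be the projection onto the first $j$ coordinates, namely the composite $\overline\pi_{j+1}\circ\cdots\circ\overline\pi_r$ inducing the toric morphism $X_r\to X_j$; note $\ker p=V_j$. By compatibility of the fans, $p(\gamma_{P_j})$ is contained in some strongly convex cone of the fan of $X_j$. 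On the other hand $u_{\rho_j^+}+u_{\rho_j^-}$ lies in the relative interior of $\gamma_{P_j}$ and is annihilated by $p$, so the origin lies in the relative interior of $p(\gamma_{P_j})$; a cone whose relative interior contains $0$ is a linear subspace, and a linear subspace contained in a strongly convex cone is trivial. Hence $p(\gamma_{P_j})=\{0\}$, i.e.\ $\gamma_{P_j}\subseteq\ker p=V_j$, so every ray of $\gamma_{P_j}$ is some $\rho_k^{\pm}$ with $k>j$. This yields the claim, whence $A$ is lower triangular with $1$'s on the diagonal, $\det A=1$, and the corollary follows.

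I expect the only real obstacle to be the key claim that $\gamma_{P_j}$ involves no ray of index $\le j$; once that is secured everything is formal. An alternative route that sidesteps the projection argument is to start from Theorem \ref{Moricone}, which already presents $\{r(P_i)\}$ as monoid generators of $\overline{NE}(X_r)_{\mathbb Z}$: since $\overline{NE}(X_r)$ is full-dimensional and strongly convex with exactly $r$ generators it must be simplicial, forcing the $r(P_i)$ to be $\mathbb R$-linearly independent, and upgrading this to a $\mathbb Z$-basis then amounts to checking that the generated submonoid is saturated, which is precisely the unimodularity established above.
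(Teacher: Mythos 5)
Your proof is correct, but it is not the route the paper takes. The paper's own proof of Corollary \ref{basis} is the short argument you relegate to your final sentence: by Theorem \ref{Moricone} the classes $r(P_i)$ generate the monoid $\overline{NE}(X_r)_{\mathbb Z}$; since the cone $\overline{NE}(X_r)$ has dimension $r$, the $r$ generators are linearly independent; and since the group $N_1(X_r)_{\mathbb Z}$ is generated by $\overline{NE}(X_r)_{\mathbb Z}$ (every irreducible curve class lies in the monoid), the $r(P_i)$ generate the full lattice and hence form a basis. Note that this already gives the $\mathbb Z$-basis directly --- $r$ linearly independent elements that generate a free group of rank $r$ are a basis --- so the saturation issue you flag does not actually arise. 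Your main argument, via the intersection matrix $A_{ij}=D_{\rho_i^+}\cdot r(P_j)$ against the divisor basis of Lemma \ref{divisors}, is a genuinely different and self-contained proof: the key claim that $\gamma_{P_j}(1)$ involves only rays of index $>j$ is correct (your projection argument through $\ker p=\mathrm{span}(e_{j+1}^+,\dots,e_r^+)$ is sound, and the same fact is visible in the paper from Proposition \ref{gamma}, where $b_j=0$ for $j\le i$), so $A$ is indeed unitriangular and unimodular. What your approach buys is independence from the contractible-classes machinery of Theorem \ref{Moricone} (Casagrande's \cite[Theorem 4.1]{Casagrande_contractibleclasses}), at the cost of redoing explicitly a triangularity computation that the paper postpones to Proposition \ref{dualbasis}, where the genuine dual basis $\{D_m\}$ of $\{r(P_i)\}$ inside $Pic(X_r)$ is constructed by exactly this kind of intersection calculus.
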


\begin{proof}
By Theorem \ref{Moricone},   
$\{r(P_i): 1\leq i \leq r\}$ generates the monoid $\overline {NE}(X_{r})_{\mathbb Z}$ 
and 
the cone $\overline {NE}(X_{r})$ is of dimension $r$. So $r(P_i)$ for $1\leq i \leq r$ are linearly independent. 
Also the group $N_1(X_{r})_{\mathbb Z}$ is generated by $\overline {NE}(X_{r})_{\mathbb Z}$, hence by $r(P_i)$ for $1\leq i\leq r$.
Hence these form a basis of $N_{1}(X_{r})_{\mathbb Z}$.
\end{proof}

 Next we describe the primitive relation $r(P_i)$ explicitly by finding the cone $\gamma_{P_i}$ in (\ref{4.1}) for $1\leq i \leq r$.
We also observe that these cones depend on the given matrix corresponding to the Bott tower.
We need some notation to state the result. 
 Recall the matrix $M_r$ corresponding to the Bott tower $X_r$ is 
  $$M_r=\begin{bmatrix}
        1 & \beta_{12} & \beta_{13} & \dots & \beta_{1r}\\
        0 & 1 &\beta_{23} & \dots & \beta_{2r}\\
        0 & 0 & 1 &\dots & \beta_{3r}\\
        \vdots & \vdots & &\ddots & \vdots \\
        0 & \dots & \dots & & 1
       \end{bmatrix}_{r\times r}$$
(see Section \ref{preliminaries}).  
Fix $1\leq i \leq r$.  Define:
  \begin{enumerate}

\item Let 
$r\geq j> j_1=i\geq 1$ and define 
 $a_{1,j}:=\beta_{j_1j}$.
 \item Let $r\geq j_2>j_1$ be the least integer such that $a_{1,j}>0$, then define for $j>j_2$
 $$a_{2,j}:=\beta_{ij_2}\beta_{j_2j}-\beta_{ij}.$$
 \item Let $k>2$ and let $r\geq j_k>j_{k-1}$ be the least integer such that $a_{k-1, j}<0$, then inductively, define for $j>j_k$
  $$a_{k, j}:=-a_{k-1, j_k}\beta_{j_{k}j}+a_{k-1, j}.$$
 
 \item For  $j\leq i$, $b_j:=0$, and for $j>i$ define 
 \begin{equation}\label{bj}
    b_j:=a_{l, j} ~\mbox{if}~ j_{l+1}\geq j >j_{l},  l\geq 1.
   \end{equation}

 Note that we have  $$b_{j}=\begin{cases}
                            0 & ~\mbox{for}~ j\leq i\\
                          <0 & ~\mbox{for}~ j\in\{j_3,\ldots, j_m\}\\
                          \geq 0 & ~\mbox{otherwise .}~
                          \end{cases}
$$
 
 \item Let $I_i:=\{j_1,\ldots, j_m\}$. 
 \end{enumerate}

 \begin{example}\label{example1}
Let $$M_7=\begin{bmatrix}
        1 & -1 & -1  & -1 & 2 & -1 & 2\\
        0 & 1 & 0 & 2 & -1 & 2 & -1 \\
        0 & 0 & 1 & 0 & -1 & -1 & -1 \\
        0 & 0 & 0 & 1 & -1 & 2 & -1 \\
        0 & 0 & 0 & 0 & 1 & -1 & 2 \\
        0 & 0 & 0 & 0 & 0 & 1 & -1 \\
        0 & 0 & 0 & 0 & 0 & 0 & 1 \\
        \end{bmatrix}_{7\times 7} $$
 
  Let $i=1$, then $j_1=1$ and 
(1) $a_{1, 2}=\beta_{12}=-1$ ; (2) $a_{1, 3}=\beta_{13}=-1$ ;
(3) $a_{1, 4}=\beta_{14}=-1$ ; 
(4)  $a_{1, 5}=\beta_{15}= 2$ ; (5) $a_{1, 6}=\beta_{16}=-1$ ; (6) $a_{1, 7}=\beta_{17}=2$.

Then $j_2=5$ and 
(1)   $a_{2, 6}=\beta_{15}\beta_{56}-\beta_{16}=-1 $ ; (2) $a_{2, 7}=\beta_{15}\beta_{57}-\beta_{17}= 2$ . 

 Then $j_3=6$ and 
$a_{3, 7}=-a_{2, 6}\beta_{67}+a_{2, 7}=
 -(-1)(-1)+(2)=1$.

 Therefore, $I_1=\{1, 5, 6\}$ .

 \end{example}

 Let $1\leq i \leq r$.
 Let $\mathscr A_i:=\{e_j^{\epsilon_j} : 1\leq j \leq r, b_j\neq 0 ~\mbox{and}~ $
 
 $$  \epsilon_j=\begin{cases}
             + &~\mbox{for}~ j\notin I_i\\
            - & ~\mbox{for}~ j\in I_i
            \end{cases}
  \hspace{1.5cm}\}. $$         
                  
 \begin{remark}
  Note that as $b_j=0$ for $j\leq i$, we can take $i<j\leq r$ in the definition of $\mathscr A_i$.
 \end{remark}

 Now we have,
 \begin{proposition}\label{gamma} Let $1\leq i \leq r$.   
 The cone $\gamma_{P_i}$ in the primitive relation of $X_{r}$ corresponding to $P_i$ is generated by $\mathscr A_i$.
 
 \end{proposition}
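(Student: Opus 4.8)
The plan is to compute the primitive relation of $P_i=\{\rho_i^+,\rho_i^-\}$ by hand: I would rewrite the lattice vector $u_{\rho_i^+}+u_{\rho_i^-}$ as an explicit combination of rays with strictly positive coefficients, and then recognize the support of that combination as $\mathscr A_i$. By (\ref{e-}) the starting point is
$$u_{\rho_i^+}+u_{\rho_i^-}=e_i^++e_i^-=-\sum_{j>i}\beta_{ij}e_j^+=-\sum_{j>i}a_{1,j}e_j^+,$$
so the coefficient of $e_j^+$ is $-a_{1,j}$, and the only obstruction to nonnegativity is the set of indices $j$ with $a_{1,j}>0$, the least of which is $j_2$. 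To cure the sign at $j_2$ I would substitute, again from (\ref{e-}), the relation $e_{j_2}^+=-e_{j_2}^--\sum_{k>j_2}\beta_{j_2k}e_k^+$. This replaces the offending term $-a_{1,j_2}e_{j_2}^+$ by the positive multiple $a_{1,j_2}e_{j_2}^-$ (as $a_{1,j_2}>0$) and changes the coefficient of each $e_k^+$ with $k>j_2$ to exactly $a_{2,k}=\beta_{ij_2}\beta_{j_2k}-\beta_{ik}$, the quantity in item (2).

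The core of the argument is then an induction on the index of the sign-flip. I would prove that after processing up to $j_l$, the vector $e_i^++e_i^-$ has been written as a combination of the $e_j^{\epsilon_j}$ with $j\le j_l$ already in final form (where $\epsilon_j=-$ precisely at the flip indices $j_2,\dots,j_l$ and $\epsilon_j=+$ otherwise), plus a tail equal to $-\sum_{k>j_1}a_{1,k}e_k^+$ when $l=1$ and to $\sum_{k>j_l}a_{l,k}e_k^+$ when $l\ge 2$. Because each substitution $e_{j}^+=-e_j^--\sum_{k>j}\beta_{jk}e_k^+$ only touches coordinates of strictly larger index, the already-finalized terms are never disturbed, so the rewriting proceeds strictly left to right and terminates. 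The least index $>j_l$ at which the tail coefficient is negative is $j_{l+1}$; substituting there turns $a_{l,j_{l+1}}e_{j_{l+1}}^+$ (with $a_{l,j_{l+1}}<0$) into the positive multiple $-a_{l,j_{l+1}}e_{j_{l+1}}^-$ and updates the remaining coefficients to $a_{l+1,k}=-a_{l,j_{l+1}}\beta_{j_{l+1}k}+a_{l,k}$, matching item (3). The sign change of the tail after the first substitution is exactly why $j_2$ is selected as the least index with $a_{1,j}>0$ while every later $j_{l+1}$ is the least index with $a_{l,j}<0$.

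Reading off the finalized coefficients gives precisely the $b_j$ of (\ref{bj}), and tracking the signs reproduces the trichotomy recorded just after (\ref{bj}); in every case the coefficient of $e_j^{\epsilon_j}$ equals $|b_j|>0$ whenever $b_j\ne 0$. Hence I obtain
$$e_i^++e_i^-=\sum_{j:\,b_j\neq 0}|b_j|\,u_{\rho_j^{\epsilon_j}},$$
a combination with strictly positive coefficients whose rays $\rho_j^{\epsilon_j}$ ($b_j\ne0$) are exactly $\mathscr A_i$. Note that $b_i=0$, so $\rho_i^\pm$ do not occur, as required.

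Finally I would verify that this is genuinely the primitive relation. Since $\mathscr A_i$ selects at most one of $e_j^+,e_j^-$ for each $j$, it contains no pair $\{e_j^+,e_j^-\}$, so by Theorem \ref{fan} it generates a cone of $\Sigma$; as all coefficients $|b_j|$ are strictly positive, $e_i^++e_i^-$ lies in the relative interior of that cone. Smoothness of $X_r$ makes $\Sigma$ simplicial, so the generators of the cone are linearly independent and the expression is unique; this identifies the cone with $\gamma_{P_i}$ and shows $\gamma_{P_i}(1)=\mathscr A_i$. The main obstacle I anticipate is the sign bookkeeping in the induction — in particular the asymmetry between the first flip (triggered by $a_{1,j}>0$) and all later flips (triggered by $a_{l,j}<0$) — together with confirming that the recursively updated tail coefficients coincide with the $a_{l,k}$ of items (2)--(3), and hence with $b_j$.
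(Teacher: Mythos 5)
Your proposal is correct and follows essentially the same route as the paper's own proof: starting from $e_i^++e_i^-=-\sum_{j>i}\beta_{ij}e_j^+$ and iteratively substituting $e_{j_l}^+=-e_{j_l}^--\sum_{k>j_l}\beta_{j_lk}e_k^+$ at the least offending index, which is exactly how the paper derives the coefficients $a_{l,j}$ and hence $b_j$. Your closing step --- checking that $\mathscr A_i$ contains no pair $\{e_j^+,e_j^-\}$ and therefore spans a cone of $\Sigma$ whose relative interior contains $e_i^++e_i^-$ --- is left implicit in the paper, so including it is a mild improvement rather than a divergence.
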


 Before going to the proof we see an example.
 
 \begin{example}\label{example}
 We use same setting as in Example \ref{example1}.
 By Lemma \ref{primitive}, we have $P_i=\{\rho_i^+, \rho_i^-\}$ for all $1\leq i \leq 7$.
  By definition of $e_i^-$(see (\ref{e-})), we have
 
\noindent (i) $e_1^-+e_1^+=e_2^++e_3^++e_4^+-2e_5^++e_6^+-2e_7^+$; (ii) $e_2^-+e_2^+=-2e_4^++e_5^+-2e_6^++e_7^+$;
(iii) $e_3^-+e_3^+=e_5^++e_7^+$; (iv) $e_4^-+e_4^+=e_5^+-2e_6^++e_7^+$;
 (v) $e_5^-+e_5^+=e_6^+-2e_7^+$; (vi) $e_6^-+e_6^+=e_7^+$;
 (vii) $e_7^-+e_7^+=0.$
 
Now we describe the cone $\gamma_{P_1}$. 
Observe that in (i) coefficient of $e_5^+$ is negative. By (v), we can see 
$$e_1^-+e_1^+=e_2^++e_3^++e_4^++2(e_5^--e_6^++2e_7^+)+e_6^+-2e_7^+.$$
Then $e_1^-+e_1^+=e_2^++e_3^++e_4^++2e_5^--e_6^++2e_7^+.$
By (vi), 
\begin{equation}\label{rho} 
e_1^-+e_1^+=e_2^++e_3^++e_4^++2e_5^-+e_6^-+e_7^+.
\end{equation}

In this case, $I_1=\{1, 5, 6\}$ (see Example \ref{example1}) and the cone $\gamma_{P_1}$ is generated by 
$$\{e^+_2, e^+_3, e^+_4, e^-_5, e^-_6, e^+_7 \}.$$
 
 \end{example}

 Now we prove Proposition \ref{gamma}:
 \begin{proof}
By (\ref{e-}), for all $1\leq i \leq r$, we have 
\begin{equation}\label{4.2}
 e_i^-+e^+_i=-\sum_{j>i}\beta_{ij}e^+_{j} ~.
 \end{equation}
 If for all $j>i$, $\beta_{ij}\leq 0$, then the cone $\gamma_{P_i}$ is generated by 
  $\{e^+_{j} :j>i, \beta_{ij}<0\}.$
If not, choose the least integer $j_2>i$ such that $\beta_{ij_2}>0$. 
 Now write 
  $$e_i^-+e^+_i=-(\sum_{j_2>j>i}\beta_{ij}e^+_{j})+\beta_{ij_2}(-e^+_{j_2})-(\sum_{j>j_2}\beta_{ij}e^+_{j}).$$
 Again by using (\ref{4.2}), we have 
 $$e_i^-+e^+_i=
-(\sum_{j_2>j>i}\beta_{ij}e^+_{j})+\beta_{ij_2}(e_{j_2}^-+\sum_{j>j_2}\beta_{j_2j}e_{j}^+)
-(\sum_{j>j_2}\beta_{ij}e^+_{j}).
$$
Then $$e_i^-+e^+_i=
-(\sum_{j_2>j>i}\beta_{ij}e^+_{j})+\beta_{ij_2}e_{j_2}^-+\sum_{j>j_2}(\beta_{ij_2}\beta_{j_2j}-\beta_{ij})e_{j}^+).
$$

By definition $a_{2, j}=\beta_{ij_2}\beta_{j_2j}-\beta_{ij}$, then we have 
\begin{equation}\label{new1} 
 e_i^-+e^+_i=
-(\sum_{j_2>j>i}\beta_{ij}e^+_{j})+\beta_{ij_2}e_{j_2}^-
+(\sum_{j>j_2}a_{2, j}e^+_{j}).
\end{equation}
If $a_{2, j}\geq 0$ for all $j>j_2$, then $\gamma_{P_i}$ is generated by
$$\{e^{\epsilon_j}_j: j>i, 
\epsilon_j=+ \forall j \neq j_2,~\mbox{and}~  \epsilon_j=- ~\mbox{for}~ j=j_2\}.$$

Otherwise, choose the least integer $j_3>j_2$ such that $a_{2,j_3}<0$. 
By substituting $-e^+_{j_3}$ from (\ref{4.2}), we get 
$$e_i^-+e^+_i=
-(\sum_{j_2>j>i}a_{1,j}e^+_{j})+\beta_{ij_2}e_{j_2}^-
+(\sum_{j_3>j>j_2}a_{2,j}e^+_{j})-a_{2,j_3}(e^-_{j_3}+\sum_{j>j_3}\beta_{j_3j}e^+_j)+(\sum_{j>j_3}a_{2,j}e^+_{j}).$$

Then,
$$e_i^-+e^+_i=
-(\sum_{j_2>j>i}a_{1,j}e^+_{j})+\beta_{ij_2}e_{j_2}^-
+(\sum_{j_3>j>j_2}a_{2,j}e^+_{j})-a_{2,j_3}e^-_{j_3}+\sum_{j>j_3}(-a_{2, j_3}\beta_{j_3j}+a_{2, j})e^+_j).$$

By definition  $a_{3, j}=-a_{2, j_3}\beta_{j_3j}+a_{2,j}$, then we have 
\begin{equation}\label{new2}
e_i^-+e^+_i=
-(\sum_{j_2>j>i}a_{1,j}e^+_{j})+\beta_{ij_2}e_{j_2}^-
+(\sum_{j_3>j>j_2}a_{2,j}e^+_{j})-a_{2,j_3}e^-_{j_3}+(\sum_{j>j_3}a_{3,j}e^+_{j}).\end{equation}
By repeating this process, we get the cone $\gamma_{P_i}$ as we required.
\end{proof}
Let $1\leq i \leq r$. Recall $I_i=\{i=j_1, \ldots, j_m\}$ as in page 13.
  Define  for $1\leq j\leq r$ ,
 \[c_{j}:=
 \begin{cases}
-b_j & ~\mbox{if}~ j\in I_i\setminus \{j_1,j_2\}\\
b_j & ~\mbox{otherwise}~
 \end{cases}
\]

Set $\gamma_{P_i}(1):=\{\gamma_1, \ldots, \gamma_l\}$.
Then we have 
\begin{corollary}\label{curves1} For $1\leq i \leq r$, 
 the primitive relation $r(P_i)(=(r_{\rho})_{\rho \in \Sigma(1)})$ of $X_{r}$ given by  
  $$r_{\rho}=\begin{cases}
           1 & ~\mbox{for}~ \rho=\rho_i^+ ~\mbox{or}~ \rho_{i}^-\\
 -c_{j}& ~\mbox{for}~ \rho=\gamma_j\in \gamma_{P_i}(1)\\
 0& ~\mbox{otherwise}~
          \end{cases}
$$
 
\end{corollary}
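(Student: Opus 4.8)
The goal is to read off the primitive relation $r(P_i)$ from the data already assembled in Proposition \ref{gamma} and the coefficient bookkeeping in the constants $b_j$ and $a_{k,j}$. Recall that, by definition \eqref{asa}, the primitive relation is determined once we know (a) which rays form the generating set $\gamma_{P_i}(1)$ of the cone $\gamma_{P_i}$, and (b) the positive integer coefficients $c_\rho$ appearing in the unique expression $\sum_{\rho\in P_i}u_\rho=\sum_{\rho\in\gamma_{P_i}(1)}c_\rho u_\rho$. The plan is therefore to combine Proposition \ref{gamma}, which identifies $\gamma_{P_i}(1)$ with the set $\mathscr A_i$, with an inspection of the coefficients produced by the iterative rewriting \eqref{new1}, \eqref{new2} in that proof.

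First I would recall that Proposition \ref{gamma} shows $\gamma_{P_i}(1)=\mathscr A_i=\{e_j^{\epsilon_j}:b_j\neq 0\}$, where $\epsilon_j=-$ exactly when $j\in I_i$ and $\epsilon_j=+$ otherwise. Hence the only rays $\rho$ with $r_\rho\neq 0$ are $\rho_i^+,\rho_i^-$ (the two rays of $P_i$, each with coefficient $1$) and the rays $\gamma_j\in\gamma_{P_i}(1)$. So the entire content of the corollary is to verify that the coefficient $c_{\gamma_j}$ attached to each generator $e_j^{\epsilon_j}$ equals the constant $c_j$ defined just above the statement, namely $-b_j$ when $j\in I_i\setminus\{j_1,j_2\}$ and $b_j$ otherwise.

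Next I would track the coefficients through the proof of Proposition \ref{gamma}. The final expression for $e_i^-+e_i^+$ expresses this vector as a nonnegative combination of the generators of $\gamma_{P_i}$; by \eqref{crho} all these coefficients are positive integers, and they are precisely the $c_{\gamma_j}$. For a generator $e_j^+$ with $j\notin I_i$ the coefficient is the value $a_{l,j}=b_j>0$ inherited from the relevant stage of the iteration, so $c_{\gamma_j}=b_j$, matching the ``otherwise'' case. For a generator $e_j^-$ with $j\in I_i$ one must distinguish the first index $j_2$ at which a positive coefficient forced the substitution (there the coefficient picked up is $\beta_{ij_2}=b_{j_2}>0$, seen in \eqref{new1}) from the later indices $j_3,\ldots,j_m$ obtained by substituting for a \emph{negative} coefficient (there the coefficient picked up is $-a_{k-1,j_k}=-b_{j_k}>0$, seen in \eqref{new2}). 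This is exactly why the definition of $c_j$ flips the sign for $j\in I_i\setminus\{j_1,j_2\}$ but not for $j=j_2$, and note $j_1=i$ contributes to $P_i$ rather than to $\gamma_{P_i}(1)$. Reconciling the sign conventions across these three cases ($j\notin I_i$; $j=j_2$; $j\in\{j_3,\dots,j_m\}$) so that every $c_{\gamma_j}$ comes out positive is the one place demanding care, and it is the main obstacle; everything else is substitution into \eqref{asa}.

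Finally, having matched each nonzero coefficient, I would assemble the three cases into the displayed formula for $r_\rho$: value $1$ on $\rho_i^\pm$, value $-c_j$ on each $\gamma_j\in\gamma_{P_i}(1)$, and $0$ elsewhere, which is the assertion of the corollary. This completes the identification of $r(P_i)$ as an explicit element of $N_1(X_r)_{\mathbb Z}$ via the exact sequence \eqref{123}.
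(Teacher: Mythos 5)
Your proposal is correct and matches the paper's (implicit) argument: the paper states this corollary without a separate proof, treating it as an immediate consequence of Proposition \ref{gamma}, the definition of the constants $c_j$, and the general recipe (\ref{asa}) for writing a primitive relation as an element of $\mathbb{Z}^{\Sigma(1)}$. Your sign bookkeeping in the three cases ($j\notin I_i$ giving $b_j$; $j=j_2$ giving $\beta_{ij_2}=b_{j_2}$ from (\ref{new1}); $j\in\{j_3,\dots,j_m\}$ giving $-a_{k-1,j_k}=-b_{j_k}$ from (\ref{new2})) is exactly the reconciliation the definition of $c_j$ is engineered to encode.
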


\begin{example} We use Example \ref{example}. 
The following can be seen easily  from (\ref{rho}).

 \begin{enumerate}
  \item $\gamma_{P_1}(1)=\{\rho_2^+, \rho_{3}^+, \rho_{4}^+, \rho_{5}^-,\rho_{6}^-,\rho_{7}^+\}$ .\\
 
\item
The primitive relation $r(P_1)=(r_{\rho})_{\rho\in \Sigma(1)}$ is given by 
  $$r_{\rho}=\begin{cases}
           1 & ~\mbox{for}~ \rho=\rho_1^+ ~\mbox{or}~ \rho_{1}^-\\
 -1 & ~\mbox{for}~ \rho= \rho_k^+, k\in \{2, 3, 4, 7\} ~\mbox{and}~ \rho=\rho_6^-\\
 -2 & ~\mbox{for}~ \rho=\rho_5^-\\
 0 & ~\mbox{otherwise}
          \end{cases}
$$
 
 \end{enumerate}

\end{example}

Now we describe the primitive relations $r(P_i)$ in terms of  intersection of two maximal cones in the fan of $X_{r}$.
Let $1\leq i\leq r$.
Let $\mathscr C'_i:= \{e_j^{\epsilon_j}: 1\leq j\leq r ~\mbox{and}~$

$$\epsilon_j=\begin{cases}
           + & ~\mbox{if}~  j\notin I_i\setminus \{j_1\} \\ 
         - & ~\mbox{if}~ j\in I_i 
         \end{cases}
\hspace{1.5cm} \}. $$

 Let $\mathscr C''_i:= \{e_j^{\epsilon_j}: 1\leq j\leq r ~\mbox{and}~$ 
$$\epsilon_j=\begin{cases}
           + & ~\mbox{if}~  j\notin I_i \\
         - & ~\mbox{if}~ j\in I_i 
         \end{cases}
\hspace{1.5cm} \}. $$

\begin{example}
 We use Example \ref{example}, for $i=1$, we have $I_1=\{1, 5, 6\}$. Then 
 $$\mathscr C'_1=\{e_1^+, e_2^+, e_3^+, e_4^+, e_5^-, e_6^-, e_7^+\} ~\mbox{and}~ 
\mathscr C''_1=\{e_1^-, e_2^+, e_3^+, e_4^+, e_5^-, e_6^-, e_7^+\}.$$

\end{example}

We prove the following by using {\it wall relation} (see page 12).
\begin{proposition}\label{LI1}
Fix $1\leq i \leq r$. The class of curve $r(P_i)$ is given by 
  $$r(P_i)=[V(\tau_i)] ~,$$ where $\tau_i=\sigma\cap \sigma'$ and $\sigma$ (respectively,  $\sigma'$)
   is the cone generated by   $\mathscr C'_i$ (respectively, by $\mathscr C''_i$).

\end{proposition}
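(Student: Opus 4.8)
The plan is to realize $\tau_i$ as a genuine wall of $\Sigma$ and then identify the associated wall relation (\ref{wallrelation}) with the primitive relation $r(P_i)$ computed in Corollary \ref{curves1}.

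First I would check that $\sigma$ and $\sigma'$, generated by $\mathscr C'_i$ and $\mathscr C''_i$ respectively, are maximal cones of $\Sigma$. By Theorem \ref{fan} the maximal cones are exactly those spanned by a set of the form $\{e_j^{\epsilon_j}: 1\leq j\leq r\}$ with one sign $\epsilon_j\in\{+,-\}$ chosen for each $j$; since each of $\mathscr C'_i$ and $\mathscr C''_i$ assigns exactly one sign to each index $j$, both are such maximal cones. Comparing the two defining sign rules, $\mathscr C'_i$ and $\mathscr C''_i$ agree in every slot $j\neq i$ and differ only at $j=i$, where $\sigma$ carries $e_i^+$ and $\sigma'$ carries $e_i^-$. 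Hence the common face $\tau_i=\sigma\cap\sigma'$ is generated by the $r-1$ rays $\{e_j^{\epsilon_j}: j\neq i\}$ (with $\epsilon_j=-$ for $j\in I_i$ and $\epsilon_j=+$ otherwise), so $\tau_i\in\Sigma(r-1)$ is a wall with $\sigma,\sigma'$ its two adjacent maximal cones.

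Next I would produce the wall relation for $\tau_i$. Since the $r$ generators of $\sigma$ form a $\mathbb Z$-basis of $N$ (smoothness), the added ray $e_i^-$ of $\sigma'$ has a unique expansion in this basis; equivalently, the $r+1$ rays $e_i^+, e_i^-$ together with the generators of $\tau_i$ admit a unique linear relation up to scalar, and normalizing to coefficient $1$ on the two endpoint rays $e_i^{\pm}$ yields precisely the wall relation. On the other hand, Proposition \ref{gamma} (via (\ref{4.2})) exhibits such a relation, namely $u_{\rho_i^+}+u_{\rho_i^-}-\sum_{\gamma\in\gamma_{P_i}(1)}c_\gamma u_\gamma=0$; here I would note that the generators of $\gamma_{P_i}$ are, by the sign conventions defining $\mathscr A_i$, exactly the subset of the generators of $\tau_i$ on which the coefficient is non-zero, so this relation lives among the same $r+1$ rays and has coefficient $1$ on each of $e_i^\pm$. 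By uniqueness the two relations coincide, so the middle coefficients $b_j$ of the wall relation equal the negatives $-c_j$ of the primitive-relation coefficients of Corollary \ref{curves1} (and vanish for the indices $j\neq i$ that do not lie in $\gamma_{P_i}(1)$). I would then read off the intersection numbers from the displayed formula following (\ref{wallrelation}): $D_{\rho_i^\pm}\cdot V(\tau_i)=1$, $D_{\gamma_j}\cdot V(\tau_i)=b_j=-c_j$, and $D_\rho\cdot V(\tau_i)=0$ otherwise. Comparing with Corollary \ref{curves1}, the vector $(D_\rho\cdot V(\tau_i))_{\rho\in\Sigma(1)}=\varphi_2^*([V(\tau_i)])$ coincides entry-by-entry with $r(P_i)$, and since $\varphi_2^*$ in (\ref{123}) is injective, I conclude $[V(\tau_i)]=r(P_i)$.

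The step I expect to be the main obstacle is the bookkeeping in the third paragraph: verifying cleanly that $\gamma_{P_i}(1)$ sits inside $\tau_i(1)$ with matching signs, so that the primitive relation of Proposition \ref{gamma} and the wall relation are literally the same relation among the same rays. The delicate point is the sign flip built into the definition of $c_j$ for $j\in I_i\setminus\{j_1,j_2\}$, which reflects rewriting the $e_j^+$-coefficients produced in the proof of Proposition \ref{gamma} in terms of the $e_j^-$ that actually generate $\gamma_{P_i}$; once the uniqueness of the linear relation among the $r+1$ rays is established, this sign matching is the only thing that needs genuine care.
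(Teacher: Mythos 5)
Your proof is correct and its core is the same as the paper's: realize $\tau_i$ as a wall, identify its wall relation with the relation of Corollary \ref{curves1}, and read off the intersection numbers $D_\rho\cdot V(\tau_i)$. The two arguments differ in how they justify and close this computation. The paper simply asserts the intersection numbers ``from (\ref{4.3}) and by using wall relation'' and then, following the proof of \cite[Theorem 6.4.11]{cox2011toric}, runs an ample-divisor argument to show that $Q=\{\rho: D_\rho\cdot V(\tau_i)>0\}$ is not contained in any cone, identifies $Q$ with the primitive collection $P_i$, and concludes $r(P_i)=[V(\tau_i)]$ from there. You instead make explicit the step the paper leaves implicit --- that by smoothness the $r+1$ rays $\{e_i^+,e_i^-\}\cup\tau_i(1)$ admit a unique relation up to scalar, so the relation of Proposition \ref{gamma} (whose support you correctly check lies in $\tau_i(1)$ with matching signs) \emph{is} the wall relation --- and then conclude by comparing $\varphi_2^*([V(\tau_i)])$ with $r(P_i)$ entry by entry and invoking injectivity of $\varphi_2^*$ in (\ref{123}). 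This buys you a shorter ending (no ample divisor, no re-identification of the primitive collection) at the cost of the sign bookkeeping you flag in your last paragraph; conversely, the paper's route re-derives $P_i=Q$ from scratch, which is redundant given Corollary \ref{curves1} but makes the proof self-contained relative to the CLS template. Both are valid; your version is the tighter one.
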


\begin{proof} 
 From Corollary \ref{curves1}, we have the following.
 \begin{equation}\label{4.3}
  e_i^++e_i^--\sum_{j>i}c_je_j^{\epsilon_j}=0,
 \end{equation}
 where $\epsilon_j$ is as in Proposition \ref{gamma}. 
   First we show that the set $Q:=\{\rho\in \Sigma(1): D_{\rho}\cdot V(\tau_i)>0\}$ is not contained in $\sigma(1)$
  for any $\sigma\in \Sigma$ (we adapt the arguments of \cite[Proof of Theorem 6.4.11, page 306]{cox2011toric}, here we are not assuming the curve $V(\tau_i)$ 
  is extremal).
    Indeed, suppose $Q\subseteq \sigma(1)$ for some $\sigma \in \Sigma$.
  Let $D$ be an ample divisor in $X_{r}$ (such exists as $X_{r}$ is projective). Then, we can assume that 
  $D$ is of the form 
  $$D=\sum_{\rho\in\Sigma(1)}a_{\rho}D_{\rho}~,~ a_{\rho}=0 ~\mbox{for all}~\rho\in \sigma(1)~\mbox{and}~ a_{\rho}\geq 0~\mbox{for all}~ \rho\notin \sigma(1)$$
  (see \cite[(6.4.10), page 306]{cox2011toric}).
Then we can see $$D\cdot V(\tau_i)=\sum_{\rho\notin \sigma(1)}a_{\rho}D_{\rho}\cdot V(\tau_i).$$
As $Q\subseteq \sigma(1)$, by definition of $Q$, $D_{\rho}\cdot V(\tau_i)\leq 0$ for $\rho\notin \sigma(1)$.
Since $a_{\rho}\geq 0$ for $\rho\notin \sigma(1)$, we get $D\cdot V(\tau_i)\leq 0$, which is a contradiction as $D$ is ample.
Therefore, $Q$ is not contained in $\sigma(1)$ for any $\sigma\in \Sigma$.  
   Hence to prove the proposition it is enough to prove
  $$P_i=Q (:=\{\rho\in \Sigma(1): D_{\rho}\cdot V(\tau_i)>0\})$$
 (see again \cite[Proof of Theorem 6.4.11, page 306]{cox2011toric}).
  From (\ref{4.3}) and by using {\it wall relation},
 we can see that 
 $$D_{\rho}\cdot V(\tau_i)=\begin{cases}
                          1 & ~\mbox{if}~ \rho=\rho_i^+ ~\mbox{or}~ \rho_i^-.\\
                          -c_j & ~\mbox{if}~ \rho=\rho_j^{\epsilon_j} ~\mbox{and}~ j\in I_i\setminus\{j_1\}.\\
                          0 & ~\mbox{otherwise.}~
                        \end{cases}
$$ 

Since $c_j$'s are all positive integers (see (\ref{crho})), by Lemma \ref{primitive} we conclude that  
$$P_i=\{\rho\in \Sigma(1): D_{\rho}\cdot V(\tau_i)>0\}$$ and hence $r(P_i)=[V(\tau_i)]$.
This completes the proof of the proposition. 
\end{proof}

\begin{example}
 In Example \ref{example}, the curve $r(P_1)=[V(\tau_1)]$ with
$\tau_1=\sigma\cap \sigma'$ where $\sigma$ is the cone generated by 
$$\mathscr C'_1=\{e_1^+, e_2^+, e_3^+, e_4^+, e_5^-, e_6^-, e_7^+\}$$ and 
$\sigma'$ is the cone generated by 
$$\mathscr C''_1=\{e_1^-, e_2^+, e_3^+, e_4^+, e_5^-, e_6^-, e_7^+\}.$$
\end{example}
 
 \begin{corollary}
   $\overline{NE}(X_{r})_{\mathbb Z}=\sum_{i=1}^r\mathbb Z_{\geq 0}[V(\tau_i)]$, where $\tau_i$ is as in Proposition \ref{LI1}.
 \end{corollary}
\begin{proof}
 This follows from Theorem \ref{Moricone} and Proposition \ref{LI1}
\end{proof}

\section{Ample and {\it nef} line bundles on the Bott tower}\label{ample and nef line bundles}

Let $X$ be a smooth projective variety.
Recall $N^1(X)$ is the real finite dimensional vector space of numerical classes of 
real divisors in $X$ (see \cite[ \S 1, Chapter IV]{kleiman1966toward}).
In $N^1(X)$, we define the {\it nef} cone $Nef(X)$ to be the cone generated by classes of numerically effective divisors and
it is a strongly convex closed cone in $N^1(X)$.
The ample cone $Amp(X)$ of $X$ is the cone in $N^1(X)$ 
generated by classes of ample divisors. 
Note that the ample cone $Amp(X)$ is interior of the 
{\it nef} cone $Nef(X)$ (see \cite[Theorem 1, \S 2, Chapter IV]{kleiman1966toward}).
Recall that the {\it nef} cone $Nef(X)$ and the Mori cone
 $\overline{NE}(X)$ 
 are closed convex cones and are dual to each other
 (see \cite[\S 2, Chapter IV]{kleiman1966toward} )
 .

In our case, we have $Pic(X_{r})_{\mathbb R}=N^1(X_{r})$, as the numerical equivalence and 
linear equivalence coincide (see \cite[Proposition 6.3.15]{cox2011toric}).

In this section, we characterize the ampleness and numerically effectiveness of line bundles 
on $X_{r}$ and we study the generators of the {\it nef} cone of $X_{r}$.
We use the notation as in Section \ref{Mori0}.
Let $D=\sum a_{\rho}D_{\rho}$ be a toric divisor in $X_{r}$ and 
 for $1\leq i\leq r $, define
$$d_i:=(a_{\rho_i^+}+a_{\rho_i^-}-\sum_{\gamma_j \in \gamma_{P_i}(1)}c_{j}a_{\gamma_j}).$$
Then we prove,
\begin{lemma}\label{amplenef}\

 \begin{enumerate}
  \item The divisor $D$ is ample if and only if $d_i>0$ for all $1\leq i \leq r$.
  \item  The divisor $D$ is numerically effective ({\it nef}) if and only if 
  $d_i\geq 0$ for all $1\leq i \leq r$.
 \end{enumerate}

\end{lemma}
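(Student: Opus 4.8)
The plan is to use the duality between the nef cone and the Mori cone established earlier in the excerpt. Recall from Section \ref{ample and nef line bundles} that $Nef(X_r)$ and $\overline{NE}(X_r)$ are dual closed convex cones, and that a divisor $D$ is \emph{nef} precisely when $D\cdot C\geq 0$ for every curve class $C$ in $\overline{NE}(X_r)$, with strict inequality on all nonzero classes characterizing ampleness (via the toric Kleiman criterion, \cite[Theorem 6.3.13]{cox2011toric}). By Theorem \ref{Moricone}, the Mori cone is generated as a monoid by the finitely many primitive relation classes $r(P_i)$, $1\leq i\leq r$. Therefore $D$ is \emph{nef} if and only if $D\cdot r(P_i)\geq 0$ for all $i$, and $D$ is ample if and only if $D\cdot r(P_i)>0$ for all $i$. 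This reduces both statements to computing the single intersection number $D\cdot r(P_i)$.

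The key computational step is to show that $D\cdot r(P_i)=d_i$. First I would write $D=\sum_{\rho\in\Sigma(1)}a_\rho D_\rho$ and recall from Corollary \ref{curves1} the explicit description of $r(P_i)=(r_\rho)_{\rho\in\Sigma(1)}$, namely $r_\rho=1$ for $\rho=\rho_i^{\pm}$, $r_\rho=-c_j$ for $\rho=\gamma_j\in\gamma_{P_i}(1)$, and $r_\rho=0$ otherwise. Since the pairing between $N_1(X_r)_{\mathbb Z}$ and $Pic(X_r)$ is given coordinatewise under the inclusion $\varphi_2^*$ of \eqref{123} (indeed $\varphi_2^*([C])=(D_\rho\cdot C)_{\rho}$), the intersection number $D\cdot r(P_i)$ is simply the pairing $\sum_{\rho} a_\rho r_\rho$. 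Substituting the values of $r_\rho$ gives
$$
D\cdot r(P_i)=a_{\rho_i^+}+a_{\rho_i^-}-\sum_{\gamma_j\in\gamma_{P_i}(1)}c_j\,a_{\gamma_j}=d_i,
$$
which is exactly the definition of $d_i$. Combining this identity with the Kleiman criterion applied to the generators $r(P_i)$ of the Mori cone yields both assertions at once.

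The main obstacle is making precise why it suffices to test positivity against the \emph{generators} $r(P_i)$ rather than against all of $\overline{NE}(X_r)$. For the \emph{nef} statement this is immediate from linearity, since every class in the cone is a nonnegative combination of the $r(P_i)$. For the ampleness statement one must invoke that the $r(P_i)$ generate all the extremal rays of the full-dimensional rational polyhedral cone $\overline{NE}(X_r)$ (which follows from Corollary \ref{basis}, where they are shown to form a basis, so each spans a distinct extremal ray), and then use the toric Kleiman criterion characterizing ampleness by strict positivity on the generators of the Mori cone. The only subtlety to address carefully is that a divisor strictly positive on all the finitely many extremal generators is automatically strictly positive on the entire cone minus the origin, which is clear here because the cone is simplicial of dimension $r$.
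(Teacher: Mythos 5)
Your proposal is correct and follows essentially the same route as the paper: compute $D\cdot r(P_i)=\sum_\rho a_\rho r_\rho=d_i$ via Corollary \ref{curves1} and the coordinatewise pairing, then invoke Theorem \ref{Moricone} together with the nef/Mori cone duality for part (2) and the toric Kleiman criterion for part (1). The only difference is that you spell out more explicitly why strict positivity on the generators suffices for ampleness, a point the paper leaves as ``similar arguments.''
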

\begin{proof}
Proof of (2): Recall that the primitive relation $r(P_i)$ is given by $$r(P_i)=(r_{\rho})_{\rho\in \Sigma(1)}$$
(see page 11). First observe that we have the following 
 $$D\cdot r(P_i)=\sum_{\rho \in \Sigma(1)}a_{\rho} (D_{\rho}\cdot r(P_i))=\sum_{\rho\in \Sigma(1)}a_{\rho}r_{\rho}$$
 (see \cite[Proposition 6.4.1, page 299]{cox2011toric}).
 Then by (\ref{asa}), we get $$D\cdot r(P_i)=\sum_{\rho\in P_i}a_{\rho}-\sum_{\rho\in \gamma_{P_i}(1)}r_{\rho}a_{\rho}.$$
 
By Lemma \ref{primitive}, we have $P_i=\{\rho_i^+, \rho_i^-\}$.
Then by Corollary \ref{curves1}, we get 

\begin{equation}\label{nnef}
D\cdot r(P_i)=(a_{\rho_i^+}+a_{\rho_i^-}-\sum_{\gamma_j\in \gamma_{P_i}(1)}c_{j}a_{\gamma_j})=:d_i.
 \end{equation}

 Since the {\it nef} cone $Nef(X_{r})$ and the Mori cone $\overline{NE}(X_{r})$ are dual to each other,
the divisor $D$ is {\it nef} if and only if $D\cdot C\geq 0$ for all torus-invariant irreducible curves $C$ in $X_{r}$.
 By Theorem \ref{Moricone}, we have $$\overline{NE}(X_{r})=\sum_{i=1}^{r}\mathbb R_{\geq 0}r(P_i).$$
  Hence $D$ is {\it nef} if and only if $D\cdot r(P_i)\geq 0$ for all $1\leq i\leq r$. 
 Therefore, by (\ref{nnef}), we conclude that the divisor $D$ is {\it nef} if and only if $d_i\geq 0$ for all $1\leq i\leq r.$ 
This completes the proof of (2).

 Proof of (1): Recall that the divisor $D$ is ample if and only if its class in $Pic(X_{r})_{\mathbb R}$ lies in the interior
 of the {\it nef} cone $Nef(X_{r})$. 
 Hence by using similar arguments as in the proof of  $(2)$ and
  the toric Kleiman criterion for ampleness \cite[Theorem 6.3.13]{cox2011toric}, we can see that
  $D$ is ample if and only if $d_i>0$ for all $1\leq i\leq r.$
\end{proof}

Next we describe the generators of the {\it nef} cone $Nef(X_{r})$ of $X_{r}$.

\begin{example}\label{ex1}
 Let $M_2=\begin{bmatrix}
 1 & -1\\
 0 & 1
\end{bmatrix}_{2\times 2}$.
 Then $X_{2}=\mathbb P(\mathcal O_{\mathbb P^1}\oplus \mathcal O_{\mathbb P^1}(1))$, 
 the Hirzebruch surface 
 $\mathscr H_1$ and the rays $\rho_1^+, \rho_1^-, \rho_2^+$ and $\rho_2^-$ of the fan (shown below) of $X_{2}$
are generated by $e_1^+, e^-_1 =-e_1^++e_2^+, e_2^+$ and $-e_2^+$ respectively.  
 
\begin{center}


\begin{tikzpicture}[auto]
\draw[thick, ->] (0,0) -- (4,0) node[anchor=north west]{$\rho_1^+$}; 
\draw[thick, ->] (0,0) -- (0,4) node[anchor= south east]{$\rho_2^+$};
\draw[thick, ->] (0,0) -- (-4,4) node[anchor= south east]{$\rho_1^-$} ;
\draw[thick, ->] (0,0) -- (0,-4) node [anchor= north west]{$\rho_2^-$}  ;
\draw (0.5,0)--(0, 0.5);
\draw (1,0)--(0, 1);
\draw (1.5,0)--(0, 1.5);
\draw (2,0)--(0, 2);
\draw (2.5,0)--(0, 2.5);
\draw (3,0)--(0, 3);
\draw (3.5,0)--(0, 3.5);
\draw (-1.1,1.1)--(0, 1.1);
\draw (-0.6,0.6)--(0, 0.6);
\draw (-1.7,1.7)--(0, 1.7);
\draw (-2.2,2.2)--(0, 2.2);
\draw (-2.7,2.7)--(0, 2.7);
\draw (-3.2,3.2)--(0, 3.2);
\draw (-3,3)--(0, -3);
\draw (-2.5,2.5)--(0, -2.5);
\draw (-3.5,3.5)--(0, -3.5);
\draw (-1,1)--(0, -1);
\draw (-2,2)--(0, -2);
\draw (-1.5,1.5)--(0, -1.5);
\draw (-0.4,0.4)--(0, -0.4);
\draw (-3.5,3.5)--(0, -3.5);
\draw (1.2,0)--(0, -1.2);
\draw (2.2,0)--(0, -2.2);
\draw (1.7,0)--(0, -1.7);
\draw (0.4,0)--(0, -0.4);
\draw (2.7,0)--(0, -2.7);
\draw (3.3, 0)--(0, -3.3);

\end{tikzpicture}

{\bf Figure. } Fan of Hirzebruck surface $\mathscr {H}_1$.

\end{center}

The primitive relations $r(P_1)$ and $r(P_2)$ are given by 
$$r(P_1)~:~ e_1^++e_1^-=e_2 ~\mbox{and}~ ~ r(P_2)~:~ e_2^++e^-_2=0 .$$
By {\it wall relation}, we observe that 

\begin{enumerate}
 \item $D_{\rho_1^+}\cdot r(P_1)=1$ and $D_{\rho_1^+}\cdot r(P_2)=0$.\\
 
 \item $D_{\rho_2^-}\cdot r(P_1)=0$ and $D_{\rho_2^-}\cdot r(P_2)=1$.
\end{enumerate}

Then the dual basis of $\{r(P_1) ~,~ r(P_2)\}$ is 
$\{D_{\rho_1^+} ~,~ D_{\rho_2^-} \}.$
Hence the generators of the {\it nef} cone $Nef(\mathscr H_1)$ are $D_{\rho_1^+}$ and $D_{\rho_2^-}$.
Note that by Lemma \ref{divisors}, $Pic(\mathscr H_1)$ is generated by $\{D_{\rho_1^+}, D_{\rho_2^-} \}.$
Let $D=aD_{\rho_1^+}+bD_{\rho_2^-}\in Pic(\mathscr H_1)$. Then
$$D~\mbox{is ample if and only if }~ a>0 ~\mbox{and}~ b>0 $$(this gives back  \cite[Example (6.1.16), page 273]{cox2011toric}).
\end{example}

Now we prove the similar results for $X_{r}$.
For $1\leq m \leq r$, define $$J_m:=\{1\leq i < m: \{\rho_m^+\}\cap \gamma_{P_i}(1) \neq \emptyset\}.$$
\begin{remark}
 Note that the set $J_m$ is the collection of indices $i<m$ 
 for which $u_{\rho_m^+}$
appear in the $\gamma_{P_i}$ part of the expression (\ref{4.1}) for the primitive relation $r(P_i).$  
\end{remark}

We set 
$D_1:=D_{\rho_1^+}$, and for $m>1$ define inductively

$$D_{m}:=\begin{cases}
         D_{\rho_m^+} & ~\mbox{if}~ J_m=\emptyset\\
         (\sum_{k\in J_m}c_{\rho_m^+}^{\gamma_{P_k}}D_k)+D_{\rho_m^+} & ~\mbox{if}~ J_m \neq \emptyset,
        \end{cases}
$$
where $-c_{\rho_m^+}^{\gamma_{P_k}}$ is the coefficient of $e_m^+$ in the primitive relation $r(P_k)$.

\begin{example}
 In Example \ref{ex1}, $D_1=D_{\rho_1^+}$, $J_2=\{1\}$ and 
 $D_2=D_1+D_{\rho_2^+}$.
  By using (\ref{e-}), we see that 
$0\sim div(\chi^{e_1^+})\sim D_{\rho_1^+}-D_{\rho_1^-}$ and 
 $0\sim div(\chi^{e_2^+})\sim D_{\rho_2^+}-D_{\rho_2^-}+D_{\rho_1^-}.$
Hence $D_2=D_1+D_{\rho_2^+}=D_{\rho_2^-}$. 
\end{example}
\begin{example}
 In Example \ref{example},  
  \begin{enumerate}
   \item Recall  by (\ref{rho}), we have  $e_1^-+e_1^+=e_2^++e_3^++e_4^++2e_5^-+e_6^-+e_7^+.$  
 Then, $\gamma_{P_1}(1)=\{\rho_2^+, \rho_{3}^+, \rho_{4}^+, \rho_{5}^-,\rho_{6}^-,\rho_{7}^+\}.$\\
 
  \item $\gamma_{P_2}(1)=\{\rho_4^-, \rho_5^-, \rho_6^+,  \rho_7^+\}$  \hspace{1cm}(since $e_2^++e_2^-=2e_4^-+e_5^-+e_6^++e^+_7$) .\\
  
  \item $\gamma_{P_3}(1)=\{\rho_5^+, \rho_7^+\}$  \hspace{2cm}(since $e_3^++e^-_3=e_5^++e_7^+ )$ .\\
  
   \item $\gamma_{P_4}(1)=\{\rho_5^+, \rho_6^-, \rho_7^-\}$ 
   \hspace{2cm}(since $e_4^++e_4^-=e^+_5+2e_6^-+e_7^-$ ). \\
  
  \item $\gamma_{P_5}(1)=\{\rho_6^+, \rho_7^-\}$ \hspace{2cm}(since $e_5^++e_5^-=e_6^++2e_7^-$ ).\\
  
  \item $\gamma_{P_6}(1)=\{\rho_7^+\}$ \hspace{2cm}(since $e_6^++e_6^-=e_7^+$ ).  \\
  
  \item $\gamma_{P_7}(1)=\emptyset.$  \hspace{2cm} (since $e_7^++e_7^-=0$ ). 
   
   \end{enumerate}
   
   Then , 
   
   \begin{enumerate}
   \item If $m=1$, then $D_1=D_{\rho_1^+}$.\\
   
   \item If $m=2$, then $J_2=\{ 1 \}$ and $c_{\rho_2^+}^{\gamma_{P_1}}=1$. Hence $D_2=D_1+D_{\rho_2^+}$.\\
     
     \item If $m=3$, then $J_3=\{ 1 \}$ and $c_{\rho_3^+}^{\gamma_{P_1}}=1$. Hence 
     $D_3=D_1+D_{\rho_3^+}$.\\
     
     \item If $m=4$, then $J_4=\{1 \}$ and $c_{\rho_4^+}^{\gamma_{P_1}}=1$. Hence $D_4=D_1+D_{\rho_{4}^+}$.\\

     \item If $m=5$, then $J_5=\{3, 4 \}$ and $c_{\rho_5^+}^{\gamma_{P_3}}=1$ ~ ; ~ $c_{\rho_5^+}^{\gamma_{P_4}}=1$. Hence 
          $$D_5=D_3+D_4+D_{\rho_{5}^+}.$$

     \item If $m=6$, then $J_6=\{2 , 5 \}$ and 
     $c_{\rho_6^+}^{\gamma_{P_2}}=1 ~ ; ~ c_{\rho_6^+}^{\gamma_{P_5}}=1$. Hence 
     $$D_6=D_2+D_5+D_{\rho_{6}^+}.$$

     \item If $m=7$, then $J_7=\{1 , 2, 3, 6 \}$ and 
     
     $$c_{\rho_7^+}^{\gamma_{P_1}}=1 ~ ; ~
     c_{\rho_7^+}^{\gamma_{P_2}}=1 ~ ; ~
     c_{\rho_7^+}^{\gamma_{P_3}}=1 ~ ; ~\mbox{and}~
     c_{\rho_7^+}^{\gamma_{P_6}}=1 ~ . ~\mbox{Hence}~$$ 
     
      $$D_7=D_1+D_2+D_3+D_6+D_{\rho_{7}^+} . $$

     \end{enumerate}

\end{example}

We prove, 

\begin{proposition} \label{dualbasis}
 The set $\{D_i:1\leq i\leq r\}$ is dual basis of $\{r(P_i):1\leq i\leq r\}$.
\end{proposition}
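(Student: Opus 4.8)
The plan is to reduce the statement to the system of identities $D_i\cdot r(P_j)=\delta_{ij}$ for all $1\le i,j\le r$. Indeed, by Corollary \ref{basis} the classes $\{r(P_j):1\le j\le r\}$ form a basis of $N_1(X_r)_{\mathbb Z}$, and since $Pic(X_r)_{\mathbb R}=N^1(X_r)$ pairs perfectly with $N_1(X_r)$ via the intersection form, verifying $D_i\cdot r(P_j)=\delta_{ij}$ is exactly the assertion that $\{D_i\}$ is the dual basis (and in particular a basis). So the whole proof becomes an intersection-number computation carried out by induction on the index $m$ defining $D_m$.

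First I would record the intersection numbers of the generators $D_{\rho_m^+}$. Using the fact that $D_\rho\cdot r(P_j)$ equals the $\rho$-coordinate of $r(P_j)$ as an element of $\mathbb Z^{\Sigma(1)}$ (via the exact sequence (\ref{123}) and the pairing $\varphi_2^*([C])=(D_\rho\cdot C)_\rho$), together with the explicit description of $r(P_j)$ in Corollary \ref{curves1}, the $\rho_m^+$-coordinate of $r(P_j)$ is $1$ when $m=j$, equals $-c_{\rho_m^+}^{\gamma_{P_j}}$ when $\rho_m^+\in\gamma_{P_j}(1)$, and is $0$ otherwise. The key structural input here is Proposition \ref{gamma} together with the remark noting that $b_k=0$ for $k\le j$: the cone $\gamma_{P_j}(1)$ only involves rays of index strictly larger than $j$. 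Hence $\rho_m^+\in\gamma_{P_j}(1)$ forces $m>j$, i.e. $j\in J_m$. This yields the clean formula
\begin{equation}\label{genform}
D_{\rho_m^+}\cdot r(P_j)=\begin{cases} 1 & \text{if } j=m,\\ -c_{\rho_m^+}^{\gamma_{P_j}} & \text{if } j\in J_m,\\ 0 & \text{otherwise.}\end{cases}
\end{equation}

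With (\ref{genform}) in hand I would induct on $m$, the case $m=1$ (where $J_1=\emptyset$, so $D_1=D_{\rho_1^+}$ and (\ref{genform}) gives $\delta_{1j}$ directly) serving as the base. For the inductive step, assume $D_k\cdot r(P_j)=\delta_{kj}$ for all $k<m$ and all $j$. If $J_m=\emptyset$ then $D_m=D_{\rho_m^+}$ and (\ref{genform}) already reads $\delta_{mj}$, since the middle case is vacuous. If $J_m\neq\emptyset$, I expand linearly
\[
D_m\cdot r(P_j)=\sum_{k\in J_m}c_{\rho_m^+}^{\gamma_{P_k}}\bigl(D_k\cdot r(P_j)\bigr)+D_{\rho_m^+}\cdot r(P_j)
=\sum_{k\in J_m}c_{\rho_m^+}^{\gamma_{P_k}}\delta_{kj}+D_{\rho_m^+}\cdot r(P_j),
\]
using the induction hypothesis on each $k\in J_m$ (all satisfying $k<m$). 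Splitting into the three cases $j=m$, $j\in J_m$, and $j\notin J_m\cup\{m\}$ and comparing with (\ref{genform}) finishes the argument: for $j=m$ the sum vanishes and the last term is $1$; for $j\notin J_m\cup\{m\}$ both terms vanish; and for $j\in J_m$ the sum contributes $+c_{\rho_m^+}^{\gamma_{P_j}}$ which cancels the last term $-c_{\rho_m^+}^{\gamma_{P_j}}$.

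The main obstacle, and genuinely the heart of the matter, is this final cancellation in the case $j\in J_m$: the inductive definition of $D_m$ is engineered precisely so that the correction terms $\sum_{k\in J_m}c_{\rho_m^+}^{\gamma_{P_k}}D_k$ kill the off-diagonal contributions of $D_{\rho_m^+}$. Making this rigorous requires the bookkeeping established above—namely that $\gamma_{P_j}(1)$ uses only higher-index rays, which is what guarantees both that the induction on $m$ is well-founded and that the nonzero off-diagonal entries of (\ref{genform}) occur exactly at the indices $j\in J_m$ appearing in the defining sum for $D_m$. Once the identities $D_i\cdot r(P_j)=\delta_{ij}$ are verified, duality of the pairing gives that $\{D_i\}$ is the dual basis of $\{r(P_i)\}$, completing the proof.
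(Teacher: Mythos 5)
Your proposal is correct and follows essentially the same route as the paper: the same intersection formula for $D_{\rho_m^+}\cdot r(P_j)$ (the paper derives it from the wall relation for $V(\tau_i)$ via Proposition \ref{LI1}, you read it off the coordinate vector of $r(P_j)$ via Corollary \ref{curves1} and the pairing, which is equivalent), followed by the same induction on $m$ with the same cancellation of the off-diagonal terms $-c_{\rho_m^+}^{\gamma_{P_j}}$ against the correction terms in the definition of $D_m$. Your organization as a single induction proving $D_m\cdot r(P_j)=\delta_{mj}$ for all $j$ at once is, if anything, slightly cleaner than the paper's split into the cases $m\le i$ and $m>i$.
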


\begin{proof} Fix $1\leq i \leq r$.
 By Proposition \ref{LI1}, the class of curve corresponding to the primitive relation $r(P_i)$ is given by 
 $$r(P_i)=[V(\tau_i)]$$ (where $\tau_i$ is described as in  Proposition \ref{LI1}).
  From Corollary \ref{curves1}, the primitive relation $r(P_i)(=[V(\tau_i)])$ is 
 \begin{equation}\label{dual}
  e_i^++e_i^--\sum_{j>i}c_je_j^{\epsilon_j}=0,
 \end{equation} 
where $\epsilon_j$ is as in Proposition \ref{LI1}.
 Note that this is the {\it wall relation} for the torus-invariant curve $V(\tau_i)$.
 We prove   
 \begin{equation}\label{dual1}
  D_m\cdot r(P_i)=D_m\cdot V(\tau_i)=\begin{cases}
                  1 &~\mbox{if}~ i=m.\\
                  0 & ~\mbox{if}~ i\neq m.
                 \end{cases}
 \end{equation}

 By (\ref{dual}) and by {\it wall relation},
 we have  
 \begin{equation}\label{dual2} 
D_{\rho_m^+}\cdot V(\tau_i)=\begin{cases}
                  1 & ~\mbox{for}~ m=i\\
                  0 & ~\mbox{for}~ m<i\\
                  -c_{\rho_m^+}^{\gamma_{P_i}} & ~\mbox{for}~ m>i ~\mbox{and}~ i\in J_m\\                  
                  0 & ~\mbox{for}~  m>i ~\mbox{and}~ i\notin J_m
                                   \end{cases}
 \end{equation}                               
Hence by definition of $D_m$, it is clear that 
\begin{equation}\label{dual3}
 D_m\cdot V(\tau_i)=\begin{cases}
                  1 & ~\mbox{for}~ m=i\\
                  0 & ~\mbox{for}~ m<i
                   \end{cases}               
   \end{equation}

 Now we claim  $D_m\cdot V(\tau_i)=0$ for all $m>i$.
    Assume that $m>i$ and write $m=i+j$, where $1 \leq j \leq r-i.$ 
   We prove the claim by induction on $j$.
If $j=1$, then $D_m=D_{i+1}$.
 
 Case 1: If $J_{i+1}=\emptyset$, then $D_{i+1}=D_{\rho_{i+1}^+}$. By (\ref{dual2}), we see that
 $D_{i+1}\cdot V(\tau_i)=0.$
 
 Case 2: Assume that $J_{i+1}\neq \emptyset$.
 
 Subcase 1: If $i\notin J_{i+1}$, then by (\ref{dual2}) and (\ref{dual3}), we can see that $D_{i+1}\cdot V(\tau_i)=0.$
 
 Subcase 2: If $i\in J_{i+1}$, then by (\ref{dual3}), we have 
 $D_{i+1}\cdot V(\tau_i)= c_{\rho_{i+1}^+}^{\gamma_{P_i}}+ (D_{\rho_{i+1}^+} \cdot V(\tau_i)). $
 
 By (\ref{dual2}), $D_{\rho_{i+1}^+} \cdot V(\tau_i)=-c_{\rho_{i+1}^+}^{\gamma_{P_i}}$ and hence $D_{i+1}\cdot V(\tau_i)=0$. This proves the claim for $j=1$.
 
 Now assume that $j>1$. 
 
 Case 1:  If $J_{m}=\emptyset$, then by (\ref{dual2}) and (\ref{dual3}), we see that
$D_{m}\cdot V(\tau_i)=0.$
 
 Case 2: Assume that $J_{m}\neq \emptyset$.
 
 Subcase 1: If $i\notin J_{m}$, then by (\ref{dual2}) and (\ref{dual3}), 
 we can see that 
 $$D_{m}\cdot V(\tau_i)=((\sum_{k\in J_m, k>i}c_{\rho_m^+}^{\gamma_{P_k}}D_k)\cdot V(\tau_i))+(D_{\rho_m^+}\cdot V(\tau_i)).$$
  By induction on $j$, $D_k\cdot V(\tau_i)=0$ for all $i<k<m$.
 By (\ref{dual2}), as $m>i$ and $m\notin J_m$ , we have $D_{\rho_m^+}\cdot V(\tau_i)=0~.$
  Hence we conclude that 
 $D_{m}\cdot V(\tau_i)=0.$
 This completes the proof of the proposition.
 \end{proof}
 
 We have,
 
\begin{theorem}\label{nefgenerators}\

\begin{enumerate}

\item The {\it nef} cone $Nef(X_{r})$ of $X_{r}$ is generated by $\{D_i:1\leq i\leq r\}$.
 \item 
 The divisor $D=\sum_ia_iD_i$ is ample if and only if $a_i>0$ for all $1\leq i\leq r$.
\end{enumerate}
  \end{theorem}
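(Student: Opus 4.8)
The plan is to read off both statements from the duality between $Nef(X_r)$ and $\overline{NE}(X_r)$, using the dual-basis relation of Proposition \ref{dualbasis} to convert everything into sign conditions on coordinates. By Corollary \ref{basis} the classes $\{r(P_i):1\le i\le r\}$ form a basis of $N_1(X_r)_{\mathbb Z}$; hence their dual basis $\{D_i:1\le i\le r\}$, produced in Proposition \ref{dualbasis}, is a basis of the dual space $N^1(X_r)=Pic(X_r)_{\mathbb R}$. In particular every class may be written uniquely as $D=\sum_{j}a_jD_j$ with $a_j\in\mathbb R$, and Proposition \ref{dualbasis} records the pairing $D_j\cdot r(P_i)=\delta_{ij}$.

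First I would establish part (1). Since $Nef(X_r)$ is the dual cone of the Mori cone $\overline{NE}(X_r)$, a divisor $D$ is nef precisely when $D\cdot r(P_i)\ge 0$ for every $i$; this is the content of Lemma \ref{amplenef}(2) combined with the description $\overline{NE}(X_r)=\sum_i\mathbb R_{\ge 0}r(P_i)$ from Theorem \ref{Moricone}. Writing $D=\sum_j a_jD_j$ and using the dual pairing, I compute
\[
D\cdot r(P_i)=\sum_{j}a_j\,(D_j\cdot r(P_i))=a_i .
\]
Thus $D$ is nef if and only if $a_i\ge 0$ for all $i$, which is exactly the assertion that $Nef(X_r)=\sum_{i=1}^r\mathbb R_{\ge 0}D_i$.

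Part (2) then follows by repeating the same computation for ampleness: by Lemma \ref{amplenef}(1), $D=\sum_j a_jD_j$ is ample if and only if $D\cdot r(P_i)>0$ for all $i$, and since $D\cdot r(P_i)=a_i$ this holds if and only if $a_i>0$ for all $i$.

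I expect no serious obstacle here, as the substantive work has already been carried out: Proposition \ref{dualbasis} identifies the $D_i$ as the dual basis, and Lemma \ref{amplenef} translates nefness and ampleness into the sign of the intersection numbers $D\cdot r(P_i)$. The only point deserving a word of care is that the nef cone must be the \emph{entire} dual of the simplicial Mori cone, so that its generators form the full dual basis rather than a proper subcollection; this is guaranteed by the toric cone theorem and the stated duality, under which the simplicial cone $\sum_i\mathbb R_{\ge 0}r(P_i)$ is dual to the simplicial cone $\sum_i\mathbb R_{\ge 0}D_i$.
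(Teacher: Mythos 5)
Your argument is correct and follows essentially the same route as the paper: both rest on the duality between $Nef(X_{r})$ and the simplicial Mori cone $\overline{NE}(X_{r})=\sum_i\mathbb R_{\geq 0}r(P_i)$, together with Proposition \ref{dualbasis} identifying $\{D_i\}$ as the dual basis of $\{r(P_i)\}$. The only cosmetic difference is that you derive part (2) from the intersection-number criterion of Lemma \ref{amplenef}(1), while the paper invokes the fact that $Amp(X_{r})$ is the interior of $Nef(X_{r})$; these are equivalent here.
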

\begin{proof}
 Since the {\it nef} cone $Nef(X_{r})$ is dual of the Mori cone $\overline{NE}(X_{r})$, (1) follows from Proposition \ref{dualbasis}.
 
 Proof of (2): This follows from (1) as the ample cone $Amp(X_{r})$ is interior of the {\it nef} cone $Nef(X_{r})$.
\end{proof}

 \section{Fano and weak Fano Bott towers}\label{fanoweakfano}
 In this section we describe the matrices $M_r$ such that the corresponding Bott tower $X_r$ is Fano or  weak Fano.
 First recall the {\it Iitaka dimension} of a Cartier divisor $D$ in a normal projective variety $X$. 
Let 
$$N(D):=\{m\geq 0 : H^0(X, \mathscr L(mD))\neq 0\},$$
where  $\mathscr L(mD) ~\mbox{is the line bundle associated to } mD.$
For $m\in N(D)$, we have a rational map
 $$\phi_m : X \dashrightarrow \mathbb P(H^0(X, \mathscr L(mD))^*).$$
If $N(D)$ is empty we define the {\it Iitaka dimension} $\kappa (D)$ of $D$ 
as 
 $-\infty$. Otherwise we define 
 $$\kappa (D):= \max \limits_{m\in N(D)} ~\{dim(\phi_m(X))\}.$$
Observe that $\kappa(D)\in \{-\infty, 0, 1, \ldots, dim(X)\}.$  
We say $D$ is {\it big} if $\kappa(D)=dim(X)$ (see \cite[Section 2.2, page 139]{lazarsfeld2004positivity}).
Note that an ample divisor is {\it big} .

 \begin{lemma}\label{big1}
 Let $X$ be a smooth projective variety, let $U$ be an open affine subset of $X$. Let 
 $D$ be an effective divisor with support $X\setminus U$. Then $D$ is {\it big}.
   \end{lemma}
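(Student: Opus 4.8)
The plan is to show directly that the Iitaka dimension satisfies $\kappa(D) = \dim X$, which by definition means $D$ is big. Write $n = \dim X$ and note that since $U = X \setminus \mathrm{Supp}(D)$ is a nonempty open subset of the irreducible variety $X$, it is dense, so $\dim U = n$. The strategy is to exhibit a single multiple $mD$ for which the associated rational map has an image of full dimension $n$.

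First I would produce enough rational functions on $X$ with poles confined to $\mathrm{Supp}(D)$ to separate points generically. Since $U$ is affine of dimension $n$, Noether normalization provides algebraically independent regular functions $f_1, \dots, f_n \in \mathcal{O}(U)$ such that $\mathcal{O}(U)$ is a finite module over $\mathbb{C}[f_1,\dots,f_n]$; in particular the morphism $f = (f_1,\dots,f_n) : U \to \mathbb{A}^n$ is finite, hence dominant. Each $f_i$ is a rational function on $X$ that is regular on $U$, so its polar divisor is supported on $X \setminus U = \mathrm{Supp}(D)$.

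Next I would bound the pole orders. Writing $D = \sum_j a_j D_j$ with $a_j > 0$, for each $i$ the divisor of poles of $f_i$ is supported among the $D_j$, so there is an integer $m$, chosen large enough to work simultaneously for all finitely many $i$, with $\mathrm{div}(f_i) + mD \geq 0$ for every $i$. Equivalently, letting $s_D$ denote the canonical section of $\mathscr{L}(mD)$ cutting out $mD$, the sections $s_D, f_1 s_D, \dots, f_n s_D$ all lie in $H^0(X, \mathscr{L}(mD))$, so $m \in N(D)$.

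Finally I would read off the dimension of the image. The rational map defined by the sublinear system spanned by $s_D, f_1 s_D, \dots, f_n s_D$ sends $x \mapsto [1 : f_1(x) : \cdots : f_n(x)]$, whose image is the closure of $f(U) \subseteq \mathbb{A}^n \subset \mathbb{P}^n$; since $f$ is dominant this image has dimension $n$. As $\phi_m$ factors through a linear projection onto this map, $\dim \phi_m(X) \geq n$, while the reverse inequality is automatic, so $\kappa(D) = n = \dim X$ and $D$ is big. The only real content is the Noether normalization step together with the observation that regularity on $U$ forces all poles into $\mathrm{Supp}(D)$; bounding the finitely many pole orders to select a single $m$ is routine.
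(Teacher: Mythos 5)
Your proof is correct, and it reaches the conclusion by a genuinely different (though closely related) route from the paper. Both arguments hinge on the same core observation: since $U$ is affine of dimension $n=\dim X$, one can find $f_1,\dots,f_n\in\mathcal O_X(U)$ algebraically independent over $\mathbb C$, and these are rational functions on $X$ whose poles are confined to $X\setminus U=\mathrm{Supp}(D)$. From there the paths diverge. The paper first reduces to producing \emph{some} big effective divisor $E$ supported on $X\setminus U$ (using that a big divisor plus an effective one is big, and that $mD$ dominates such an $E$), and then shows $E$ is big by counting sections: the degree-$m$ monomials in $f_1,\dots,f_n$ are linearly independent in $H^0(X,\mathcal O_X(mE))$, so $h^0(mE)$ grows like $m^n$, and bigness follows from the growth characterization cited from Lazarsfeld. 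You instead skip the reduction by bounding the pole orders of the $f_i$ directly against the given $D$ (legitimate, since every polar component of each $f_i$ is one of the components of $D$, all of which carry positive coefficients), and then verify the paper's own definition of bigness head-on: the subsystem of $|mD|$ spanned by $s_D, f_1s_D,\dots,f_ns_D$ induces the dominant map $(f_1,\dots,f_n):U\to\mathbb A^n$, so $\dim\phi_m(X)\geq n$ and $\kappa(D)=n$. Your version is more self-contained with respect to the definition of Iitaka dimension given in Section 6 and avoids invoking the equivalence between section growth and bigness; the paper's version is shorter on the geometric side and leans on standard cited facts. The Noether normalization step is slightly more than you need --- algebraic independence of the $f_i$ already makes the map to $\mathbb A^n$ dominant --- but it does no harm.
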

\begin{proof}
It suffices to show that there exists an effective divisor $E$ with support $X\setminus U$ such that $E$ is {\it big}.
Indeed, we then have $mD=E+F$ for some $m\geq 0$ and for some effective divisor $F$.
Then $E+F$ is big and hence so is $D$.

There exists $f_1,\ldots, f_n\in \mathcal O_X(U)$ algebraically independent over $\mathbb C$, where 
$n=dim(X)$. View $f_1,\ldots, f_n$ as rational functions on $X$, then $f_1,\ldots, f_n\in H^0(X, \mathcal O_X(E))$
for some effective divisor $E$ with support $X\setminus U$ (since $div(f_i)$ is an effective divisor with support in $X\setminus U$ for $1\leq i \leq r$).
Thus, the monomials in $f_1,\ldots, f_n$ of any degree $m$ are linearly independent elements of $H^0(X, \mathcal O_X(mE))$. 
So $dim(H^0(X, \mathcal O_X(mE)))$ grows like $m^n$ as $m\to \infty$.
Hence $E$ is {\it big} (see \cite[Corollary 2.1.38 and Lemma 2.2.3]{lazarsfeld2004positivity}) and this completes the proof.
\end{proof}

We get the following as a variant of Lemma \ref{big1}.
\begin{corollary}\label{big}
Let $X$ be a smooth projective variety and $D$ be an effective divisor. Let $supp(D)$ denotes the support of $D$.
If $X\setminus supp(D)$ is affine, then $D$ is {\it big}. 
\end{corollary}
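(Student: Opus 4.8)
The plan is to deduce the corollary directly from Lemma \ref{big1} by a suitable choice of open affine subset, so that no fresh construction of sections or growth estimate is required. First I would set $U := X \setminus supp(D)$. Since $D$ is an effective divisor, its support $supp(D)$ is a closed subset of $X$ of pure codimension one, hence $U$ is open in $X$; by hypothesis $U$ is moreover affine. Then $D$ is an effective divisor whose support is precisely $X \setminus U = supp(D)$, which is exactly the configuration required by Lemma \ref{big1}. Applying that lemma to $X$, $U$ and $D$ immediately yields that $D$ is \emph{big}, completing the proof.

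There is essentially no obstacle here: the corollary is merely a reformulation of Lemma \ref{big1} in which the role of the open affine subset is played by the complement of $supp(D)$. The only point worth recording is that $U = X \setminus supp(D)$ genuinely satisfies the hypotheses of the lemma, namely that it is an open affine subset of $X$ with $supp(D) = X \setminus U$; this is immediate once one notes that the support of an effective divisor is a closed subset of codimension one, so its complement is open, and the affineness is exactly what is assumed. Thus the entire content is carried by Lemma \ref{big1}, and the proof reduces to this single substitution.
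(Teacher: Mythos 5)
Your proposal is correct and matches the paper's intent exactly: the paper states the corollary as an immediate variant of Lemma \ref{big1}, obtained precisely by taking $U = X \setminus supp(D)$ as you do. The one observation you make — that this $U$ is open (since $supp(D)$ is closed) and affine by hypothesis, with $supp(D) = X \setminus U$ — is all that is needed.
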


A smooth projective variety $X$ is called Fano (respectively, weak Fano) if its anti-canonical line bundle $-K_X$ is 
 ample (respectively, {\it nef} and {\it big}). 
  To describe our results 
we use the notation and terminology from Section \ref{intro} (see page 2).
  We prove,
  
  \begin{theorem}\label{fano}\
  
  \begin{enumerate}
   \item 
   $X_{r}$ is Fano if and only if it satisfies 
   $I$.
\item    $X_{r}$ is weak  Fano if and only if it satisfies 
$II$.
  \end{enumerate}

  \end{theorem}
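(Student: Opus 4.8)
The plan is to reduce both assertions to the ample/nef criterion of Lemma~\ref{amplenef} applied to the anticanonical divisor, and then to translate the resulting numerical inequalities into the row conditions $N^1_i$ and $N^2_i$ using the explicit primitive relations of Proposition~\ref{gamma} and Corollary~\ref{curves1}. First I would observe that $-K_{X_r}=\sum_{\rho\in\Sigma(1)}D_\rho=\sum_{i=1}^r(D_{\rho_i^+}+D_{\rho_i^-})$, so in the notation of Lemma~\ref{amplenef} every coefficient $a_\rho$ equals $1$. Substituting this into $(\ref{nnef})$ together with Corollary~\ref{curves1} yields
$$d_i \;=\; -K_{X_r}\cdot r(P_i)\;=\;2-\sum_{\gamma_j\in\gamma_{P_i}(1)}c_j,$$
where each $c_j$ is a strictly positive integer by $(\ref{crho})$. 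Hence, by Lemma~\ref{amplenef}, $X_r$ is Fano if and only if $\sum_j c_j\le 1$ for every $i$, and $-K_{X_r}$ is nef if and only if $\sum_j c_j\le 2$ for every $i$.

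For the weak Fano statement I would first dispose of bigness. The effective divisor $-K_{X_r}$ has support $\bigcup_{\rho}D_\rho$, whose complement in $X_r$ is the open torus $(\mathbb C^*)^r$, which is affine; hence $-K_{X_r}$ is automatically big by Corollary~\ref{big}. Thus $X_r$ is weak Fano if and only if $-K_{X_r}$ is nef, i.e.\ if and only if $\sum_j c_j\le 2$ for all $i$. This reduces both parts of the theorem to a single combinatorial task: to show that $\sum_{\gamma_j\in\gamma_{P_i}(1)}c_j\le 1$ (resp.\ $\le 2$) holds exactly when the $i$-th row of $M_r$ satisfies $N^1_i$ (resp.\ $N^2_i$).

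To carry this out I would run the reduction algorithm from the proof of Proposition~\ref{gamma} on the $i$-th row, tracking the total coefficient sum, which since each $c_j\ge 1$ is extremely rigid. If $|\eta_i^+|=0$ then no substitution occurs, $\gamma_{P_i}(1)=\{\rho_j^+:\beta_{ij}<0\}$ with $c_j=-\beta_{ij}$, and $\sum_j c_j=\sum_{j\in\eta_i^-}|\beta_{ij}|$; this is $\le 1$ (resp.\ $\le 2$) precisely under part~(i) of $N^1_i$ (resp.\ $N^2_i$). If $|\eta_i^+|>0$, each substitution replaces a positive $e_{j_k}^+$ by $e_{j_k}^-$ via $(\ref{4.2})$ and introduces the coefficients $a_{k,j}$; the key observation is that the quantity $A_{m,j}=\beta_{im}\beta_{mj}-\beta_{ij}$ occurring in $N^2_i$ is exactly $a_{2,j}$ produced after the first substitution at the pivot $m=j_2$. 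Under part~(ii) of $N^1_i$ the pivot conditions $\beta_{im}=1$ and $\beta_{ij}=\beta_{mj}$ for $j>m$ force $a_{2,j}=0$, so the algorithm terminates in one step with $e_i^++e_i^-=e_m^-$ and $\sum_j c_j=1$. The four subcases (a)--(d) of $N^2_i$ then correspond to the admissible shapes with $\sum_j c_j=2$: a single pivot $m$ with $\beta_{im}\in\{1,2\}$ and $A_{m,j}=0$ for $j>m$ (case (a), yielding one generator $e_m^-$ of coefficient $\beta_{im}$), or two pivots $m_1<m_2$ whose residual coefficients $A_{m_\bullet,j}$ vanish (cases (b)--(d), yielding two generators of unit coefficient).

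I expect the main obstacle to be the bookkeeping in the nef (weak Fano) direction: one must verify that cases~(a)--(d) of $N^2_i$ give an exhaustive and non-redundant list of the ways the algorithm can terminate with coefficient sum $\le 2$, and in particular that any configuration not covered by $N^2_i$ forces a third substitution or a coefficient $\ge 2$ surviving at a wrong place, pushing $\sum_j c_j\ge 3$. This amounts to checking that the sign and vanishing conditions on the $A_{m_\bullet,j}$ exactly prevent the accumulation of extra positive coefficients after the second pivot. The Fano direction is comparatively clean, since the constraint $\sum_j c_j\le 1$ permits only the empty relation or a single unit generator and thus collapses directly onto $N^1_i$.
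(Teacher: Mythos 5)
Your proposal follows essentially the same route as the paper: bigness of $-K_{X_r}$ via Corollary \ref{big} applied to the complement of the open torus, reduction to $d_i=2-\sum_{\gamma_j\in\gamma_{P_i}(1)}c_j$ via Lemma \ref{amplenef}, and then the case analysis on $|\gamma_{P_i}(1)|\in\{0,1,2\}$ matching the subcases of $N^1_i$ and $N^2_i$ through the substitution algorithm of Proposition \ref{gamma} (in particular your identification $A_{m,j}=a_{2,j}$ is exactly how the paper compares with equations (\ref{new1}) and (\ref{new2})). The bookkeeping you flag as the main obstacle is precisely what the paper's Cases 1--3 and Subcases (i)--(iv) carry out, so the argument is correct and essentially identical.
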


 \begin{proof} Proof of (2):
  We have 
  \begin{equation}\label{canonical}
   K_{X_{r}}=-\sum_{\rho\in \Sigma(1)}D_{\rho}
    \end{equation}
(see \cite[Theorem 8.2.3]{cox2011toric} or  \cite[Page 74]{fulton}).
    The anti-canonical line bundle 
  of any projective toric variety is {\it big}, 
  since we have
  $$supp(-K_{X_{r}})=X_{r}\setminus (\mathbb C^*)^{r},$$
  $(\mathbb C^*)^{r}$ is an affine open subset of $X_{r}$,
by Corollary \ref{big}, $-K_{X_{r}}$ is {\it big}.

By using  Lemma \ref{amplenef}, we prove that $-K_{X_{r}}$ is {\it nef} if and only if $X_{r}$  satisfies 
$II$.

Let $D=-K_{X_r}$. 
By (\ref{canonical}) and by definition of $d_i$ for $D$ (see Lemma \ref{amplenef}), we have 
$$d_i=2-\sum_{\gamma_j\in \gamma_{P_i}(1)}c_j.$$
Then by Lemma \ref{amplenef}(2), $-K_{X_{r}}$ is {\it nef} if and only if 
$\sum_{\gamma_j\in \gamma_{P_i}(1)}c_j\leq 2$ for all $1\leq i \leq r$.

First assume that $-K_{X_{r}}$ is {\it nef}. Fix $1\leq i \leq r$.
By above discussion, we have 
\begin{equation}\label{6.2}
 \sum_{\gamma_j\in \gamma_{P_i}(1)}c_j\leq 2.
\end{equation}

Since $c_j$'s are positive integers (see (\ref{crho})), we get the following situation:
$$|\gamma_{P_i}(1)|=0 ~\mbox{or}~|\gamma_{P_i}(1)|=1, ~\mbox{or}~|\gamma_{P_i}(1)|=2.$$

\underline{Case 1}: If $|\gamma_{P_i}(1)|=0$, then by definition of $\gamma_{P_i}$ 
(see Definition \ref{def2}), we have 
$$r(P_i)~:~ e_i^++e_i^-=0.$$
Then $|\eta_i^+|=0=|\eta_i^-|$. Hence we see  
$X_{r}$ satisfies the condition $N_i^1$.

\underline{Case 2}: If $|\gamma_{P_i}(1)|=1$, 
then there exists a unique $r\geq m>i,$ 
such that $\gamma_m\in \gamma_{P_i}(1)$ and the primitive relation
 is either
\begin{equation}\label{k1}
 r(P_i)~:~ e_i^++e_i^-=c_me_m^+ 
\end{equation}
or 
\begin{equation}\label{k2}
  r(P_i)~:~ e_i^++e_i^-=c_me_m^- 
\end{equation}

 By (\ref{6.2}), we get $c_m=1 ~\mbox{or}~2$.

\underline{Subcase (i)}: Assume that $c_m=1$. 
If the primitive relation is (\ref{k1}), then
we can see that $|\eta_i^+|=0$ and $c_m=-\beta_{im}=1$. 
Then $\beta_{im}=-1$ and hence 
$X_{r}$ satisfies the condition $N_i^1$.

If the primitive relation is (\ref{k2}), then by comparing with (\ref{new1}), we get that $|\eta_i^+|>0$ and 
there exists $m>i$ such that $c_m=\beta_{im}=1$, $\beta_{ij}=0$ for all $j<m$ and $\beta_{im}\beta_{mj}-\beta_{ij}=0$ for $j>m$.
Hence $X_r$ satisfies the condition $N_i^2$.\\ 

\underline{Subcase (ii)}: Assume that $c_m=2$. 
If the primitive relation $r(P_i)$ is (\ref{k1}), then $|\eta_i^+|=0$ and $|\eta_i^-|=1$.
So by (\ref{e-}), we have $2=c_m=-\beta_{im}$.

If the primitive relation $r(P_i)$ is (\ref{k2}), then by comparing with $\ref{new1}$, we get 
 $|\eta_i^+|>0$ and there exists $m>i$ such that $c_m=\beta_{im}=2$, $\beta_{ij}=0$ for $j<m$  and $\beta_{im}\beta_{mj}-\beta_{ij}=0$ for $j>m$.\\
Hence $X_{r}$ satisfies the condition $N_i^2$.

\underline{Case 3}: If $|\gamma_{P_i}(1)|=2$, then  there exists  $r\geq m_2> m_1>i$ with
$\gamma_{m_1}, \gamma_{m_2} \in \gamma_{P_i}(1)$ such that the primitive relation $r(P_i)$ is 
\begin{equation}\label{k3}
 r(P_i)~:~ e_i^++e_i^-=c_{m_1}e_{m_1}^{\pm}+c_{m_2}e_{m_2}^{\pm} 
\end{equation}
\underline{Subcase (i):} If the primitive relation is $r(P_i) ~:~ e_i^++e_i^-=c_{m_1}e_{m_1}^{+}+c_{m_2}e_{m_2}^{+}$,
by (\ref{e-}) we see that
$|\eta_i^+|=0$ and $|\eta_i^-|=2$ .  By (\ref{6.2}) and (\ref{crho}) ($c_i$'s are positive integers), we get
$$c_{m_1}=-\beta_{im_1}=1 ~\mbox{,} ~\mbox{and}~  c_{m_2}=-\beta_{im_2}=1.$$ Hence 
$X_{r}$ satisfies the condition $N_i^2.$

\underline{Subcase (ii):} 
If the primitive relation is $r(P_i) ~:~ e_i^++e_i^-=c_{m_1}e_{m_1}^{+}+c_{m_2}e_{m_2}^{-}$,
by comparing with (\ref{new1}) we see that
$|\eta_i^+|>0$ and there exists $m_2>m_1>i$ such that $c_{m_1}=-\beta_{im_1}=1$, $c_{m_2}=\beta_{im_2}=1$, $\beta_{ij}=0$ for $m_1\neq j<m_2$  and $\beta_{im_2}\beta_{m_2j}-\beta_{ij}=0$ for $j>m_2$.

\underline{Subcase (iii):} 
If the primitive relation is $r(P_i) ~:~ e_i^++e_i^-=c_{m_1}e_{m_1}^{-}+c_{m_2}e_{m_2}^{+}$,
by (\ref{new1}) we see that
$|\eta_i^+|>0$ and there  exists $m_2>m_1>i$ such that  $c_{m_1}=\beta_{im_1}=1$, $c_{m_2}=\beta_{im_1}\beta_{m_1m_2}-\beta_{im_2}=1$, $\beta_{ij}=0$ for $j<m_1$ and $\beta_{im_1}\beta_{m_1j}-\beta_{ij}=0$ for $m_1<j\neq m_2$.

\underline{Subcase (iv):} 
If the primitive relation is $r(P_i) ~:~ e_i^++e_i^-=c_{m_1}e_{m_1}^{-}+c_{m_2}e_{m_2}^{-}$,
by comparing with (\ref{new2}), we see that
$|\eta_i^+|>0$ and there exists $m_2>m_1>i$ such that $c_{m_1}=\beta_{im_1}=1$, $1=c_{m_2}=-(\beta_{im_1}\beta_{m_1m_2}-\beta_{im_2})$, $\beta_{ij}=0$ for $j<m_1$,  $\beta_{im_1}\beta_{m_1j}-\beta_{ij}=0$ for $m_1<j<m_2$ and $\beta_{m_2j}+(\beta_{im_1}\beta_{m_1j}-\beta_{ij})=0$ for $j>m_2$.
Hence $X_r$ satisfies the condition $N_i^2$.

Therefore, we conclude that if $X_{r}$ is weak Fano then
$X_{r}$ satisfies 
the condition $II$.
Similarly, we can prove by using Lemma \ref{amplenef}(2), if $X_{r}$ satisfies 
$II$ then 
$X_{r}$ is weak Fano.
This completes the proof of (2).

   Proof of (1): This follows by using similar arguments as in the proof of (2) and Lemma \ref{amplenef}(1). 
   \end{proof}
   
   \begin{remark}
    In \cite{YS}, S. Yusuke classified Fano (and weak Fano) ``generalized Bott Manifolds''.  
   \end{remark}

 \subsection{Local rigidity of Bott towers}

Now we prove some vanishing results for 
 the cohomology of tangent bundle of the Bott tower $X_r$ and we get some local rigidity results. 
 Let $T_{X_r}$ denotes the tangent bundle of $X_r$. Then we have 
 \begin{corollary}\label{vanishing}\
If $X_{r}$ satisfies 
$I$, then  $H^i(X_{r}, T_{X_{r}})=0$ for all $i\geq 1$. 
 \end{corollary}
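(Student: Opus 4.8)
The plan is to separate the two ranges $i\ge 2$ and $i=1$, using that condition $I$ makes $X_r$ Fano by Theorem \ref{fano}(1), so that $-K_{X_r}$ is ample. For $i\ge 2$ I would argue without any reference to the tower structure: by Serre duality $H^i(X_r,T_{X_r})\cong H^{r-i}(X_r,\Omega^1_{X_r}\otimes\omega_{X_r})^{*}$, and since $\omega_{X_r}=\mathcal O_{X_r}(K_{X_r})$ is the inverse of an ample bundle, the Kodaira–Akizuki–Nakano vanishing theorem gives $H^q(X_r,\Omega^1_{X_r}\otimes\omega_{X_r})=0$ for $q<r-1$. Taking $q=r-i$ disposes of every $i\ge 2$ at once. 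This step uses only that $X_r$ is smooth projective Fano over $\mathbb C$, which is consistent with the standing hypothesis on the ground field.

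The remaining, and genuinely harder, case $i=1$ is the rigidity statement, and here I would use the iterated bundle structure and induct on $r$, the base case $r=1$ being $X_1=\mathbb P^1$ with $H^1(\mathbb P^1,T_{\mathbb P^1})=H^1(\mathbb P^1,\mathcal O(2))=0$. First I would check that condition $I$ descends to $X_{r-1}$: deleting the last row and column of $M_r$ only removes entries from each earlier row, and inspecting $N^1_i(\mathrm i)$ and $N^1_i(\mathrm{ii})$ shows the truncated row still satisfies $N^1_i$ (if the distinguished column $m$ in $N^1_i(\mathrm{ii})$ equals $r$, the row becomes identically zero and falls under $N^1_i(\mathrm i)$), so $X_{r-1}$ is again Fano and the inductive hypothesis $H^1(X_{r-1},T_{X_{r-1}})=0$ applies. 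Taking cohomology of
\[
0\longrightarrow T_{\pi_r}\longrightarrow T_{X_r}\longrightarrow \pi_r^{*}T_{X_{r-1}}\longrightarrow 0,
\]
the projection formula with $\pi_{r*}\mathcal O_{X_r}=\mathcal O_{X_{r-1}}$ and $R^{>0}\pi_{r*}\mathcal O_{X_r}=0$ gives $H^1(X_r,\pi_r^{*}T_{X_{r-1}})\cong H^1(X_{r-1},T_{X_{r-1}})=0$. For the subbundle, writing $X_r=\mathbb P(\mathcal E)$ with $\mathcal E=\mathcal O_{X_{r-1}}\oplus\mathcal L_{r-1}$, a standard computation ($\pi_{r*}T_{\pi_r}\cong\mathcal{E}nd(\mathcal E)/\mathcal O_{X_{r-1}}\cong\mathcal O_{X_{r-1}}\oplus\mathcal L_{r-1}\oplus\mathcal L_{r-1}^{-1}$, with $R^{>0}\pi_{r*}T_{\pi_r}=0$ since $T_{\pi_r}$ restricts to $\mathcal O_{\mathbb P^1}(2)$ on fibres) yields
\[
H^1(X_r,T_{\pi_r})\cong H^1(X_{r-1},\mathcal L_{r-1})\oplus H^1(X_{r-1},\mathcal L_{r-1}^{-1}),
\]
the trivial summand contributing nothing as $X_{r-1}$ is toric. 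The long exact sequence then forces $H^1(X_r,T_{X_r})=0$ once both line bundle groups vanish.

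The crux, and the step I expect to be hardest, is therefore the simultaneous vanishing $H^1(X_{r-1},\mathcal L_{r-1})=0=H^1(X_{r-1},\mathcal L_{r-1}^{-1})$, which is exactly where Fanoness is essential: for the non-Fano surface $\mathbb F_2$ one has $\mathcal L_1=\mathcal O_{\mathbb P^1}(2)$ and $H^1(\mathbb P^1,\mathcal L_1^{-1})\neq 0$, mirroring the fact that $\mathbb F_2$ is not rigid. To establish it I would read off from the proof of Theorem \ref{fano} that Fanoness forces $\sum_{\gamma_j\in\gamma_{P_i}(1)}c_j\le 1$, so that every primitive relation is either $e_i^++e_i^-=0$ or $e_i^++e_i^-=e_m^{\pm}$ with a single unit coefficient; through Lemma \ref{amplenef} this constrains $\mathcal L_{r-1}$ enough that $H^1(X_{r-1},\mathcal L_{r-1})$ vanishes by Demazure (nef) vanishing, while $H^1(X_{r-1},\mathcal L_{r-1}^{-1})$ is controlled by Serre duality, or equivalently by the combinatorial description of the cohomology of toric line bundles on $X_{r-1}$ under condition $I$. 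Converting this structural constraint into the clean simultaneous vanishing for $\mathcal L_{r-1}$ and its inverse is the real work; granting it, the corollary follows by combining the two ranges.
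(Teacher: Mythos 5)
Your route is genuinely different from the paper's: the proof in the text is a two-line citation --- condition $I$ gives Fano by Theorem \ref{fano}, and then \cite[Proposition 4.2]{Bien1996} (Bien--Brion) gives $H^i(X_r,T_{X_r})=0$ for all $i\geq 1$ for any smooth toric Fano variety. Your treatment of $i\geq 2$ via Serre duality and Akizuki--Kodaira--Nakano is correct and standard for any smooth Fano variety, and your observation that condition $I$ descends to $X_{r-1}$ under truncation of $M_r$ is also correct. But the argument as a whole does not close, and you say so yourself.

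The gap is the simultaneous vanishing $H^1(X_{r-1},\mathcal L_{r-1})=0=H^1(X_{r-1},\mathcal L_{r-1}^{-1})$, which carries the entire weight of the $i=1$ case and is left as a ``granting it'' step. Worse, the mechanism you propose for the first summand --- Demazure vanishing, which requires $\mathcal L_{r-1}$ to be nef --- already fails in the simplest nontrivial Fano example: for $M_2=\left[\begin{smallmatrix}1&1\\0&1\end{smallmatrix}\right]$ condition $I$ holds via $N^1_1(\mathrm{ii})$, $X_2=\mathbb P(\mathcal O\oplus\mathcal O_{\mathbb P^1}(-1))=\mathscr H_1$ is Fano, but $\mathcal L_1=\mathcal O_{\mathbb P^1}(-1)$ is not nef (here $H^1$ vanishes anyway, but not for the reason you give, and in higher towers condition $N^1_i(\mathrm{ii})$ likewise permits $\mathcal L_{r-1}$ to be non-nef). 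Note also that you cannot extract the vanishing formally from the truth of the corollary: the long exact sequence of $0\to T_{\pi_r}\to T_{X_r}\to\pi_r^*T_{X_{r-1}}\to 0$ only shows $H^1(X_r,T_{\pi_r})$ is the cokernel of $H^0(X_r,T_{X_r})\to H^0(X_{r-1},T_{X_{r-1}})$, so its vanishing is an extra statement about lifting vector fields, not a consequence of rigidity of $X_r$ and $X_{r-1}$. A genuine proof along your lines would have to translate condition $I$ into the combinatorial description of $H^1$ of toric line bundles on $X_{r-1}$ and check both $\mathcal L_{r-1}$ and its inverse case by case against $N^1_i(\mathrm i)$ and $N^1_i(\mathrm{ii})$; as written, the central claim is asserted rather than proved, so the argument is incomplete. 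If you simply want the corollary, the efficient path is the paper's: smooth toric Fano implies $H^{\geq 1}(X,T_X)=0$ by Bien--Brion, with no induction on the tower needed.
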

\begin{proof}
If $X_{r}$ satisfies 
$I$, then by Theorem \ref{fano}, $X_{r}$ is Fano variety.
By \cite[Proposition 4.2]{Bien1996}, since $X_{r}$ is a smooth Fano toric variety, we get 
 $H^i(X_{r}, T_{X_{r}})=0 ~\mbox{for all}~  i\geq 1.$ 
\end{proof}

It is well known that by Kodaira-Spencer theory, the
vanishing of $H^1(X, T_X)$ implies that $X$ is locally rigid, i.e.  admits no local deformations (see \cite[Proposition 6.2.10, page 272]{huybrechts}).
Then by above result we have
\begin{corollary}\label{rigid}
 The Bott tower $X_r$ is locally rigid if it satisfies 
 $I$.
\end{corollary}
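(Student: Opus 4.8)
The plan is to deduce the local rigidity of $X_r$ directly from the vanishing of the first cohomology of its tangent bundle, via Kodaira--Spencer deformation theory. First I would invoke Corollary \ref{vanishing}: under the hypothesis that $X_r$ satisfies condition $I$, that corollary already supplies $H^i(X_r, T_{X_r}) = 0$ for all $i \geq 1$, and in particular $H^1(X_r, T_{X_r}) = 0$. This is the only cohomological input needed for the statement at hand, so I would isolate the case $i = 1$ from the stronger vanishing.

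The second step is to recall the standard principle from deformation theory that the infinitesimal (first-order) deformations of a compact complex manifold $X$ are parametrized by $H^1(X, T_X)$, so that the vanishing of this space forces $X$ to admit no nontrivial local deformations; this is precisely what it means for $X$ to be locally rigid. This is the content of \cite[Proposition 6.2.10, page 272]{huybrechts}, which I would cite. Chaining the two steps together then yields the assertion immediately.

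The substantive geometric and analytic work is already packaged upstream: Corollary \ref{vanishing} rests on the Fano classification of Theorem \ref{fano} together with the Bien--Brion vanishing \cite{Bien1996} for smooth Fano toric varieties. Consequently there is essentially no obstacle remaining at this final stage, and the proof reduces to a two-line chain of implications. The one point I would be careful about is to cite the Kodaira--Spencer consequence for $H^1$-vanishing specifically, rather than appealing to the full range $H^i = 0$ for $i \geq 1$, since local rigidity is governed by $H^1(X_r, T_{X_r})$ alone.
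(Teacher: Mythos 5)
Your proposal is correct and follows exactly the paper's argument: it deduces $H^1(X_r, T_{X_r})=0$ from Corollary \ref{vanishing} under condition $I$ and then invokes Kodaira--Spencer theory as in \cite[Proposition 6.2.10, page 272]{huybrechts} to conclude local rigidity. No differences from the paper's proof worth noting.
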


\section{Log Fanoness of Bott towers}\label{Logfanovariety}

 Recall that a pair $(X, D)$ of a normal projective variety $X$ and an 
 effective $\mathbb Q$-divisor $D$ is {\it \bf Kawamata log terminal (klt)}
 if $K_X+D$ is $\mathbb Q$-Cartier, and for all proper birational
 maps $f:Y\longrightarrow X$, the pull back $f^*(K_X+D)=K_Y+D'$ satisfies
 $f_* K_Y=K_X$ and $\lfloor D' \rfloor\leq 0$, where  $\lfloor~\sum_i a_iD_i\rfloor=\sum_i\lfloor a_i\rfloor D_i$, $\lfloor x\rfloor$ is the greatest integer $\leq x$.
 The pair $(X, D)$ is called {\it \bf log Fano} if it is klt and $-(K_X+D)$ is ample.

  We recall here, a condition for
the anti-canonical line bundle to be {\it big} (see \cite{cascini2013anti}).
Let $X$ be a $\mathbb Q$- Gorenstein projective normal variety 
over $\mathbb C$.
If $X$ admits a divisor $D$ with the pair $(X, D)$ being log Fano 
then $-K_{X}$ is big
(In \cite{cascini2013anti} there is a necessary and sufficient condition that $X$ is log Fano (or
\textquotedblleft Fano type \textquotedblright) variety, see \cite[Theorem 1.1]{cascini2013anti} for more details on this ).

 If $X$ is smooth and $D$ is a normal crossing divisor, the pair $(X, D)$ is log Fano 
 if and only if $\lfloor D \rfloor=0$ and $-(K_X+D)$ is ample (see \cite[Lemma 2.30, Corollary 2.31 and Definition 2.34]{kollar2008birational}).
  In case of  toric variety $X$ see also \cite[Definition 11.4.23 and Proposition 11.4.24, page 558]{cox2011toric}.
  We use notation as in Lemma \ref{amplenef}.
 Let $D=\sum_{\rho\in \Sigma(1)}a_{\rho}D_{\rho}$ be a toric
 divisor in $X_{r}$, with $a_{\rho}'s$ in $\mathbb Q_{\geq 0}$
 and $\lfloor D \rfloor=0$. For $1\leq i\leq r$, define
  $$k_i:=d_i-2+\sum_{\gamma_j\in \gamma_{P_i}(1)}c_j.$$
  Then we prove,
 \begin{theorem}\label{logfano}
 
   The pair $(X_{r}, D)$ is log Fano if and only if $k_i<0$ for all $1\leq i\leq r$. 

 \end{theorem}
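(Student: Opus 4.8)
The plan is to trade the log Fano condition for a plain ampleness statement and then read off the answer from the ampleness criterion of Lemma \ref{amplenef}(1). Since $X_{r}$ is a smooth toric variety, the torus-invariant boundary $\sum_{\rho\in\Sigma(1)}D_\rho$ is a simple normal crossing divisor, so the effective $\mathbb{Q}$-divisor $D=\sum_{\rho\in\Sigma(1)}a_\rho D_\rho$, being supported on it, is normal crossing. By the criterion recalled above (from \cite{kollar2008birational}), together with the standing hypothesis $\lfloor D\rfloor=0$, the pair $(X_{r},D)$ is log Fano if and only if $-(K_{X_{r}}+D)$ is ample. Thus the entire statement reduces to characterizing ampleness of this one divisor.

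First I would write $-(K_{X_{r}}+D)$ explicitly as a toric divisor. Using $K_{X_{r}}=-\sum_{\rho\in\Sigma(1)}D_\rho$ from (\ref{canonical}), I obtain
$$-(K_{X_{r}}+D)=\sum_{\rho\in\Sigma(1)}(1-a_\rho)D_\rho,$$
a toric divisor whose coefficients are $b_\rho:=1-a_\rho$. Writing $d_i'$ for the quantity $d_i$ of Lemma \ref{amplenef} computed with respect to the coefficients $b_\rho$, that lemma's part (1) tells me $-(K_{X_{r}}+D)$ is ample if and only if $d_i'>0$ for all $1\leq i\leq r$.

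The crux is then a bookkeeping identity. Substituting $b_\rho=1-a_\rho$ into the definition of $d_i'$ and regrouping so that the original $d_i=a_{\rho_i^+}+a_{\rho_i^-}-\sum_{\gamma_j\in\gamma_{P_i}(1)}c_j a_{\gamma_j}$ reappears, I expect
$$d_i'=(1-a_{\rho_i^+})+(1-a_{\rho_i^-})-\sum_{\gamma_j\in\gamma_{P_i}(1)}c_j(1-a_{\gamma_j})=2-d_i-\sum_{\gamma_j\in\gamma_{P_i}(1)}c_j=-k_i,$$
where the last equality is merely the definition of $k_i$. Hence $d_i'>0$ is equivalent to $k_i<0$, and combining this with the ampleness criterion of the previous paragraph yields that $(X_{r},D)$ is log Fano exactly when $k_i<0$ for all $1\leq i\leq r$.

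Since the argument is an elementary substitution once the log Fano condition has been replaced by ampleness, there is no serious analytic obstacle; the only point requiring care is the legitimacy of invoking the smooth-pair criterion rather than checking the klt condition directly via a resolution. I would therefore emphasize that $D$ is supported on the normal crossing toric boundary of the smooth variety $X_{r}$, so the smooth-pair criterion applies verbatim, and the klt property then follows automatically from the hypothesis $\lfloor D\rfloor=0$.
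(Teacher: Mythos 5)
Your proposal is correct and follows essentially the same route as the paper: reduce log Fano to ampleness of $-(K_{X_{r}}+D)$ via the smooth normal-crossing criterion with $\lfloor D\rfloor=0$, expand $-(K_{X_{r}}+D)=\sum_{\rho}(1-a_{\rho})D_{\rho}$, and apply Lemma \ref{amplenef}(1) to see that the relevant quantity equals $-k_i$. The paper performs exactly this substitution, so there is nothing to add.
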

 \begin{proof}
  From the above discussion by the condition on $D$, the pair $(X_{r}, D)$ is log Fano
  if and only if $-(K_{X_{r}}+D)$ is ample.
    Note that as $-K_{X_{r}}=\sum_{\rho\in \Sigma(1)}D_{\rho}$, we get 
    $$-(K_{X_{r}}+D)=\sum_{\rho\in \Sigma(1)}(1-a_{\rho})D_{\rho}.$$
    By Lemma \ref{amplenef}, $-(K_{X_{r}}+D)$ is ample if and only if 
  \begin{equation}\label{5.1}
    ((1-a_{\rho_{i}^+})+(1-a_{\rho_i^-})-\sum_{\gamma_j\in \gamma_{P_i}(1)}c_j(1-a_{\gamma_j}))>0 ~\mbox{for all}~ 1\leq i\leq r.  
    \end{equation}
    Recall the definition of $d_i$ for $D$,
    $$d_i=a_{\rho_i^+}+a_{\rho_i^-}-\sum_{\gamma_j\in \gamma_{P_i}(1)}c_ja_{\gamma_j}.$$
    Then we have $$((1-a_{\rho_{i}^+})+(1-a_{\rho_i^-})-\sum_{\gamma_j\in \gamma_{P_i}(1)}c_j(1-a_{\gamma_j}))=-(d_i-2+\sum_{\gamma_j\in \gamma_{P_i}(1)}c_j) .$$
    
Hence in (\ref{5.1})    
  $$((1-a_{\rho_{i}^+})+(1-a_{\rho_i^-})-\sum_{\gamma_j\in \gamma_{P_i}(1)}c_j(1-a_{\gamma_j}))=-k_i ~\mbox{for all}~1\leq i\leq r $$
and  we conclude that $-(K_{X_{r}}+D)$ is ample if and only if $k_i<0$ for all $1\leq i\leq r$. This completes the proof of the theorem.
 \end{proof}

 \section{Extremal rays and Mori rays of the Bott tower}\label{extremalrays}
 In this section we study the extremal rays  and  Mori rays of Mori cone of 
 $X_{r}$.
  First we recall some definitions.
 Let $V$ be a finite dimensional vector space over $\mathbb R$ and let $K$ be a (closed) cone in $V$.
A subcone $Q$ in $K$ is called extremal if $u, v \in  K, u + v \in Q$ then
$ u, v \in Q$.  A face of $K$ is an extremal subcone. A one-dimensional face is called an extremal ray. Note that an extremal ray is contained in the boundary of $K$.

Let $X$ be a smooth projective variety. 
An extremal ray $R$ in $\overline {NE}(X)\subset N_1(X)$ is called Mori
if $R\cdot K_{X}<0$, where $K_X$ is the canonical divisor in $X$.
Recall that $\overline{NE}(X_{r})$ is a strongly convex rational polyhedral cone
of maximal dimension in $N_{1}(X_{r})$.
 We prove,
 \begin{theorem}\label{mori}\
 
 \begin{enumerate}
  \item The class of curves $r(P_i)$ for $1\leq i \leq r$ are all extremal rays in the Mori cone  $\overline{NE}(X_{r})$ of $X_{r}$.
  \item Fix $1\leq i \leq r$, 
  the class of curve $r(P_i)$ is  Mori ray if and only if 
  either $|\gamma_{P_i}(1)|=0$, or $|\gamma_{P_i}(1)|=1$ with $c_j=1$ for $\gamma_j\in \gamma_{P_i}(1)$.
 \end{enumerate}

 \end{theorem}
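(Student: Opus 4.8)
The plan is to prove the two statements about extremal rays and Mori rays separately, leveraging the structural results already established in the earlier sections.

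\textbf{Proof of (1).} By Corollary \ref{basis}, the classes $\{r(P_i):1\leq i\leq r\}$ form a basis of $N_1(X_{r})_{\mathbb Z}$, and by Theorem \ref{Moricone} they generate the monoid $\overline{NE}(X_{r})_{\mathbb Z}$, so that
$$\overline{NE}(X_{r})=\sum_{i=1}^{r}\mathbb R_{\geq 0}\,r(P_i).$$
Thus $\overline{NE}(X_{r})$ is a simplicial cone whose extremal rays are precisely the $r$ one-dimensional faces $\mathbb R_{\geq 0}\,r(P_i)$. The key point I would make explicit is that in a simplicial cone generated by a basis, each generator spans an extremal ray: if $u+v\in\mathbb R_{\geq 0}\,r(P_i)$ with $u,v\in\overline{NE}(X_{r})$, then writing $u=\sum_k\lambda_k r(P_k)$ and $v=\sum_k\mu_k r(P_k)$ with all $\lambda_k,\mu_k\geq 0$, the linear independence of the $r(P_k)$ forces $\lambda_k=\mu_k=0$ for $k\neq i$, so $u,v\in\mathbb R_{\geq 0}\,r(P_i)$. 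Hence each $r(P_i)$ is an extremal ray.

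\textbf{Proof of (2).} By definition, $r(P_i)$ is a Mori ray if and only if it is extremal (established in (1)) and $r(P_i)\cdot K_{X_{r}}<0$, equivalently $-K_{X_{r}}\cdot r(P_i)>0$. I would compute this intersection number using the formula from Lemma \ref{amplenef}: with $D=-K_{X_{r}}=\sum_{\rho\in\Sigma(1)}D_{\rho}$, all coefficients $a_{\rho}=1$, so as in the proof of Theorem \ref{fano}(2),
$$-K_{X_{r}}\cdot r(P_i)=d_i=2-\sum_{\gamma_j\in\gamma_{P_i}(1)}c_j.$$
Therefore $r(P_i)$ is a Mori ray if and only if $\sum_{\gamma_j\in\gamma_{P_i}(1)}c_j<2$. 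Since each $c_j$ is a positive integer by (\ref{crho}), this sum is $<2$ exactly when either $\gamma_{P_i}(1)=\emptyset$ (the sum is $0$), or $|\gamma_{P_i}(1)|=1$ with the single coefficient $c_j=1$ (the sum is $1$). This is precisely the stated condition, completing the proof.

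The proof is essentially a direct assembly of earlier results, so I do not anticipate a serious obstacle; the only point requiring slight care is the clean verification that a basis-generated (simplicial) cone has exactly its generators as extremal rays, which is where I would be most careful to spell out the argument rather than assert it. The numerical computation in part (2) is immediate once the intersection formula $d_i=2-\sum_{\gamma_j\in\gamma_{P_i}(1)}c_j$ is invoked from the weak Fano analysis.
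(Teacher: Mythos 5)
Your proposal is correct and follows essentially the same route as the paper: part (1) is deduced from Theorem \ref{Moricone} and Corollary \ref{basis} (you merely spell out the standard fact that the generators of a simplicial cone spanned by a basis are its extremal rays, which the paper leaves implicit), and part (2) uses the same intersection computation $-K_{X_{r}}\cdot r(P_i)=2-\sum_{\gamma_j\in\gamma_{P_i}(1)}c_j$ together with the positivity of the $c_j$. No gaps.
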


 \begin{proof} 
  Proof of (1):
 This follows from Theorem \ref{Moricone} and Corollary \ref{basis}.
 
 Proof of (2):
 By (1), $r(P_i)$ $1\leq i \leq r$ are all extremal rays in $\overline{NE}(X_{r})$. Hence for $1\leq i \leq r$, $r(P_i)$ is Mori if $K_{X{r}}\cdot r(P_i)<0$.
 Since $K_{X_{r}}=-\sum_{\rho\in \Sigma(1)}D_{\rho}$, we can see by Corollary \ref{curves1} and by similar arguments as in the proof of Lemma \ref{amplenef},
  \begin{equation}\label{mori108}
     K_{X_{r}}\cdot r(P_i)= -2+\sum_{\gamma_j\in \gamma_{P_i}(1)}c_j.
  \end{equation}

 Thus if $K_{X{r}}\cdot r(P_i)<0$, then 
  $$\sum_{\gamma_j\in \gamma_{P_i}(1)}c_j<2.$$ As $c_j$ are all positive integers (see (\ref{crho})), we get  either $|\gamma_{P_i}(1)|=0$, or $|\gamma_{P_i}(1)|=1$ and $c_j=1$ for $\gamma_j\in \gamma_{P_i}(1)$. 
  Similarly, by using (\ref{mori108}) we can prove the converse.  This completes the proof of the theorem. 
 \end{proof}

{\bf Acknowledgements:} 
I would like to thank Michel Brion  for valuable discussions,
many critical 
comments
and for encouragement throughout the preparation of this article.

\bibliographystyle{amsalpha} 


\end{document}